\newtheorem{thm}{Theorem}[section]
\newtheorem{lem}[thm]{Lemma}
\newtheorem{prop}[thm]{Proposition}
\newtheorem{cor}[thm]{Corollary}
\newtheorem{question}[thm]{Question}
\newtheorem*{lem*}{Lemma}
\newtheorem*{obs*}{Observation}
\newtheorem*{prop*}{Proposition}
\theoremstyle{definition}
\newtheorem{defn}[thm]{Definition}
\newtheorem{ex}[thm]{Example}
\theoremstyle{remark}
\newtheorem{rmk}[thm]{Remark}
\newtheorem*{rmk*}{Remark}
\newcommand{\R}{\mathbb{R}}
\newcommand{\Z}{\mathbb{Z}}
\newcommand{\rad}{\mathrm{rad}}
\newcommand{\conv}{\mathrm{conv}}
\newcommand{\cconv}{\overline{\mathrm{conv}}}
\newcommand{\PD}{\mathrm{PD}}
\newcommand{\wh}{\widehat} 
\newcommand{\wc}{\widecheck} 
\newcommand{\wt}{\widetilde}
\newcommand{\N}{\mathcal{N}}
\newcommand{\cN}{\overline{\mathcal{N}}}
\newcommand{\p}{\mathcal{P}}
\newcommand{\A}{\mathcal{A}}
\newcommand{\VR}{\mathcal{V}}
\newcommand{\C}{\mathcal{C}}
\newcommand{\M}{\mathcal{M}}
\newcommand{\V}{\mathcal{V}}
\newcommand{\T}{\mathcal{T}}
\newcommand{\h}{\mathrm{H}}
\newcommand{\X}{\mathcal{X}}
\newcommand{\Y}{\mathbf{Y}}
\newcommand{\E}{\mathrm{E}}
 \newcommand{\dd}{\mathbf{d}}
\DeclareMathOperator{\diam}{diam}
\DeclareMathOperator{\dist}{dist}
\DeclareMathOperator{\UW}{UW}
\DeclareMathOperator{\AW}{AW}
\DeclareMathOperator{\TW}{TW}
\DeclareMathOperator{\KW}{KW}
\DeclareMathOperator{\ts}{E}
\DeclareMathOperator{\spr}{spread}
\newcommand{\Hom}{\widetilde{\mathrm{H}}}
\newcommand{\CHom}{\widecheck{\mathrm{H}}}
\DeclareMathOperator{\uspr}{{\ddot{u}}-spread}
\DeclareMathOperator{\cdef}{cdef}
\DeclareMathOperator{\hcdef}{hcdef}
\DeclareMathOperator{\closure}{cl}
\begin{document}

\title{Geometric Bounds for Persistence}

\author[1]{Alexey Balitskiy\thanks{\href{mailto:alexey.balitskiy@uni.lu}{alexey.balitskiy@uni.lu}}}

\author[2]{Baris Coskunuzer
\thanks{\href{mailto:coskunuz@utdallas.edu}{coskunuz@utdallas.edu}}}

\author[3]{Facundo M\'emoli
 \thanks{\href{mailto:facundo.memoli@rutgers.edu}{facundo.memoli@rutgers.edu}}}
     
      \affil[1]{Department of Mathematics, University of Luxembourg}

      \affil[2]{Department of Mathematical Sciences, UT Dallas}

      \affil[3]{Department of Mathematics, Rutgers University}

\maketitle

\begin{abstract}

In this paper, we offer a new perspective on persistent homology by integrating key concepts from metric geometry. For a given compact subset $\X$ of a Banach space $\Y$, we analyze the topological features arising in the family $\N_\bullet(\X \subset \Y)$ of nested neighborhoods of $\X$ in $\Y$ and provide several geometric bounds on their persistence (lifespans).

We begin by examining the lifespans of these homology classes in terms of their filling radii in $\Y$, establishing connections between these lifespans and fundamental invariants in metric geometry, such as the Urysohn width. We then derive bounds on these lifespans by considering the $\ell^\infty$-principal components of $\X$, also known as Kolmogorov widths.

Additionally, we introduce and investigate the concept of  extinction time of a metric space $\X$: the critical threshold beyond which no homological features persist in any degree. We propose  methods for estimating the \v{C}ech and Vietoris--Rips extinction times of $\X$ by relating $\X$ to its convex hull and to its tight span, respectively. 
\end{abstract}

\tableofcontents

\section{Introduction}
\label{sec-intro}

Over the past decade, numerous approaches within Topological Data Analysis (TDA) have been developed to uncover patterns across a wide variety of data types. Among these, Persistent Homology (PH) has emerged as a cornerstone of TDA, providing a robust multiscale feature extraction framework. This progress has been driven by the development of efficient algorithmic procedures and effective software implementations for its computation (see \Cref{sec:PH}). Indeed, 
PH has gained considerable traction in diverse machine learning applications spanning fields such as:
\begin{itemize}[noitemsep,topsep=0pt]
\item Bioinformatics and Biomedicine~\cite{nicolau2011topology,chan2013topology,gameiro2015topological,cang2017topologynet,amezquita2020shape,bleher2021topology,skaf2022topological},
\item Finance~\cite{gidea2018topological,rudkin2023uncertainty}, 
\item Materials Science~\cite{hiraoka2016hierarchical,robins2016percolating,sorensen2022persistent,obayashi2022persistent,
lee2018high}, 
\item Neuroscience~\cite{curto2008cell,singh2008topological,dabaghian2012topological,reimann2017cliques}, and 
\item Network Analysis~\cite{hofer2017deep,hofer2019learning,hofer2020graph,carriere2020perslay,zhao2020persistence,aktas2019persistence}.
\end{itemize}

In this paper, we explore aspects of the Persistent Homology (PH) methodology through the lens of metric geometry. Our results offer new quantitative interpretations of the PH output, providing deeper insights into its structure and significance.

\subsection{Persistent Homology (in  a nutshell)}
We  recall the basic idea behind persistent homology. Let $\Delta_\bullet=\{\Delta_r\stackrel{\iota_{r,s}}{\longhookrightarrow} \Delta_s\}_{0< r\leq s}$ be a \emph{filtration}: a \emph{nested} family of topological spaces or  simplicial complexes, e.g. obtained via the Vietoris--Rips  filtration $\VR_\bullet(\X)$ or the \v{C}ech filtration $\C_\bullet(\X\subset \Y)$ induced from a compact metric space $\X$ (in the case of the \v{C}ech filtration, one typically assumes that $\X$ is a subset of a Banach space $\Y$). For a non-negative integer $k$, let $\omega$ be a nontrivial degree-$k$ reduced homology class appearing in the nested family $\Delta_\bullet$, that is, assume that $\omega \in \Hom_k(\Delta_r;\mathbb{F})$ for some $r>0$.\footnote{Here $\mathbb{F}$ is a fixed field.}   We in fact consider the degree-$k$ \emph{homological spectrum} of the filtration $\Delta_\bullet$, $\mathrm{Spec}_k(\Delta_\bullet)$ to be the collection  of all such non-zero homology classes (see \Cref{eq:spec}).   The \textit{birth time} $b_\omega$  of $\omega \in \Hom_k(\Delta_r;\mathbb{F})$ is the infimal $u> 0$ such that there exists $\omega_u\in \Hom_k(\Delta_u)$ with the property that $(\iota_{u,r})_*(\omega_u) = \omega$. Similarly, we define  the \textit{death time} $d_\omega$ of $\omega$ to be the supremal  $v\geq b_\omega$ such that $\omega$ does not become homologically trivial in $\Delta_v$, that is  $(\iota_{r,v})_*(\omega)\neq 0$. This is informally interpreted as indicating  that the nontrivial class $\omega$ is ``alive"  inside the interval $I_\omega = (b_\omega,d_\omega]$.\footnote{Whether the interval is left/right open/closed depends on semi-continuity conditions of $\Delta_\bullet$. See \Cref{defn:spectrum} for the case of neighborhood filtrations, the type of filtrations that we concetrate on in this paper.} We call the   quantity $d_\omega-b_\omega$ \textit{the lifespan} (or persistence) of the class  $\omega$. 

The notion of \emph{Persistent Homology} is closely related to but subtly differs from this process of recording birth and death times for individual homology classes described above.  The degree-$k$ persistent homology of $\Delta_\bullet$ is the directed system of vector spaces $\Hom_k(\Delta_\bullet;\mathbb{F})$. Under  suitable tameness assumptions on the family $\Delta_\bullet$, an up-to-isomorphism representation of this directed system  can be obtain via its \emph{persistence diagram}, a multiset of intervals $I$ on $\mathbb{R}_{>0}$ supporting certain linearly independent collection of nontrivial homology classes that are alive at all points in $I$.  See \Cref{sec:PH} for the precise definition of persistence diagrams (PD).

\medskip
In many applications, the lifespan of a topological feature $\omega$ is critically significant, as it is often interpreted as a measure of the ``size" or ``importance" of $\omega$. In practice, topological features with long lifespans—those that \emph{persist}—are typically considered to represent the primary shape characteristics of a dataset, while features with short lifespans are generally regarded as (topological) noise.\footnote{However, short lived, or even ephemeral, topological features also can carry useful information; see Usher and Zhang \cite{usher2016persistent}, Bubenik, Hull, Patel and Whittle~\cite{bubenik2020persistent}, and M\'emoli and Zhou \cite{memoli2022ephemeral}.} Therefore, \emph{determining} (or estimating) and \emph{interpreting} the lifespans of these topological features appearing in the persistence diagram has critical significance for applications such as the ones mentioned above.

\subsection{Connections with Metric Geometry and Main Results}
In this paper, we aim to give a \emph{geometric interpretation} of these lifespans by relating them to several notions from Metric Geometry.  We study the lifespans of individual homology classes $\omega$ 
appearing throughout the Vietoris--Rips and \v{C}ech filtrations by resorting to the notion of \emph{filling radius} and  to several notions of \emph{width}. See~\Cref{sec:related} for a discussion of the interplay between widths and filling radii in metric geometry. Through these notions of width, our results show that the lifespans of homology classes are controlled by the (geometric) size of their representatives in the filtration thus providing precise (geometric) interpretations of the significance of the features tracked by persistent homology.

\medskip
We first discuss implications of the absolute (Gromov's) and relative filling radius of a homology class (\Cref{defn:filling_radius}) in our setting. Then, we observe that the \v{C}ech lifespan of a homology class $\omega$ is equal to its relative filling radius in  ambient space (\Cref{sec:FR_background}). 
\
Next, we give several bounds for the lifespans of individual homology classes by resorting to the notions of Urysohn width, Alexandrov width, and Kolmogorov width. They measure in various ways how well a space can be approximated by a $k$-dimensional complex, and they are denoted by $\UW_k(\cdot), \AW_k(\cdot), \KW_k(\cdot)$, respectively. 
Since all these notions of width are monotonically non-increasing with respect to the dimension parameter (e.g., $\UW_k(\X)\geq \UW_{k+1}(\X)$),  any degree-$k$ estimate automatically applies to homology classes in higher degrees.

\begin{restatable*}[VR Lifespans via Urysohn Width]{cor}{corvruw}\label{cor:VR-UW} Let $\X$ be a compact metric space, and let $\omega \in \mathrm{Spec}_k(\VR_\bullet(\X))$, $k\ge 1$. Then, 
\[
d_\omega-b_\omega\leq \UW_{k-1}\big(\overline{\N}_{b_\omega}(\X\subset \ts(\X))\big).
\]
In particular, $$d_\omega-b_\omega\leq \UW_{k-1}(\ts(\X)).$$
\end{restatable*}

Here and throughout the paper, $\overline{\N}_r(\X\subset \mathcal{Z})$ denotes the closed $r$-neighborhood of $\X$ inside the metric space $\mathcal{Z}$ and $\ts(\X)$ denotes the \emph{tight span}  of $\X$ (\Cref{defn:tightspan}),  a canonically constructed metric space admitting an isometric embedding of $\X$ and enjoying properties reminiscent of (but stronger than) the ones satisfied by the convex hull. 

Notice that in a special case, if $\X$ is a closed $k$-manifold, and $\omega = [\X]$ is its fundamental class, since $b_\omega=0$ in that case, the result above implies that the Vietoris--Rips  lifespan of $\omega$ is bounded above by the Urysohn width of $\X$, i.e., $d_\omega\leq \UW_{k-1}(\X)$ (this particular bound goes back to Gromov; it follows from~\cite[Appendix~1, Example after Lemma~(B)]{gromov1983filling} combined with~\cite[Appendix~1, Proposition~(D)]{gromov1983filling}).

\medskip
Next, we give several bounds for \v{C}ech lifespans. The first one is via Alexandrov widths.

\begin{restatable*}[\v{C}ech Lifespans via Alexandrov Width]{cor}{coraw}\label{cor:cech-AW} Let $\X$ be compact subset of a Banach space $\Y$ and let $\omega\in\mathrm{Spec}_k(\C_\bullet(\X\subset \Y))$, $k\ge 1$. Then, 
\[
d_\omega-b_\omega\leq \AW_{k-1}(\cN_{b_\omega}(\X \subset \cconv(\X)) \subset\Y).
\]
In particular, $$d_\omega-b_\omega\leq \AW_{k-1}(\cconv(\X) \subset\Y).$$
\end{restatable*}

Here $\cconv(\X)\subset \Y$ denotes the closure of the convex hull of $\X\subset \Y$.

\medskip
Further, we introduce a new  notion of width, called {\em treewidth} (\Cref{defn:treewidth}), 
in order to obtain a finer estimate on \v{C}ech lifespans (\Cref{cor:cech-TW}). Notice that in the estimates above, for each  class $\omega$, in order to obtain an upper bound for its lifespan, the corresponding width needs to be calculated for the neighborhood $\N_{b_\omega}(\X \subset \cdot)$ of $\X$, not for $\X$ itself. In the following, we get rid of this dependency on the neighborhood. 

\medskip The notion of treewidth permits establishing a certain \emph{multiplicative} bound on lifespans.

\begin{restatable*}[\v{C}ech Lifespans via Treewidth]{cor}{cortw}\label{cor:cech-CTW}
     Let $\X$ be compact subset in a Banach space $\Y$ and  let $\omega\in\mathrm{Spec}_k(\C_\bullet(\X\subset \Y))$, with  birth time $b_\omega\geq 1$. Then, 
$$\dfrac{d_\omega}{b_\omega}\leq C+1+\TW^C_k(\X\subset \Y).$$
\end{restatable*}

Next, we consider the $\ell^\infty$-version of principal component analysis (PCA$_\infty$) for a compact subset $\X$ of a Banach space $\Y$ (e.g., a point cloud in $\R^N$). By using the estimates in previous sections, we relate the $(k+1)$-variance $\nu_{k+1}(\X\subset\Y)$ with the lifespans of classes $\omega$ appearing throughout the \v{C}ech filtration
(\Cref{sec:PCA}). 
We note that the variance $\nu_{k+1}(\X\subset \Y)$ is also known as the \textit{$k\textsuperscript{th}$ Kolmogorov width} of $\X$ in the approximation theory literature (\Cref{rmk:kolmogorov}).

\begin{restatable*}[\v{C}ech Lifespans via $\ell^\infty$-Variance]{cor}{corpca} \label{cor:PCA}
Let $\X$ be a compact subset of a Banach space $\Y$ and let  $\omega\in\mathrm{Spec}_k(\C_\bullet(\X\subset \Y))$, $k\ge 0$. Then, 
\[
d_\omega-b_\omega\leq \nu_{k+1}(\X\subset \Y) = \KW_k(\X\subset\Y).
\]
\end{restatable*}

Note that by monotonicity of widths (e.g., $\AW_k(\X\subset \Y)\geq \AW_{k+1}(\X\subset\Y))$, all these bounds apply to homology classes of degree higher than $k$ as well.

\medskip
While these lifespan bounds depend on the homology degree, we next give a general bound for \v{C}ech lifespans which is independent of degree. We achieve this by generalizing Katz's notion of spread \cite{katz1983filling} to the extrinsic setting and introducing the notion that we call \textit{\"uberspread} (\Cref{sec:spread}). \"Uberspread basically measures the Hausdorff distance from the space to the closest \emph{\"ubercontractible} space (a contractible space where all neighborhoods are also contractible, see \Cref{def:ubercontractible}) in an ambient space. With this notion, we generalize the existing VR-lifespan estimates via spread obtained by Lim, M\'emoli and Okutan  \cite{lim2020vietoris} to \v{C}ech-lifespans in any degree.

\begin{restatable*}[\v{C}ech Lifespans via \"Uberspread]{thm}{thmuberspread}\label{thm:uberspread}
    Let $\X$ be a compact subset of a Banach space $\Y$. 
Let $\omega\in\mathrm{Spec}_k(\C_\bullet(\X\subset \Y))$ for any $k\geq 0$. Then, $$d_\omega-b_\omega\leq 2\uspr(\X\subset \Y).$$
\end{restatable*}

While the results above are effective for bounding individual lifespans ($d_\omega-b_\omega$), we also attack a more general question: how to obtain a global bound for death times of homology classes across all degrees?  To do this we introduce a notion, called {\em extinction time},  representing the maximal threshold after which there is no nontrivial homology class in any degree $k\geq 0$. We bound both \v{C}ech extinction times $\wc{\xi}(\X\subset \Y)$ and Vietoris--Rips extinction times $\xi(\X)$ by relating $\X$ to their convex hulls and tight spans, respectively. In the case of \v{C}ech filtrations, we introduce a notion called {\em convexity deficiency}, $\cdef(\X\subset \Y)$, which is the Hausdorff distance of a space $\X$ to its convex hull in $\Y$ (\Cref{sec:Cechextinction}).

\begin{restatable*}[Bounding \v{C}ech Extinction]{thm}{thmcechextintion} \label{thm:extinct-cdef} Let $\X$ be a compact subset of a Banach space $\Y$. Then, 
$$\wc{\xi}(\X \subset \Y) \le \cdef(\X\subset\Y).$$ 
\end{restatable*}

In the VR case, we define an analogous notion called {\em hyperconvexity deficiency}, $\hcdef(\X)$, which is the Hausdorff distance between the tight span $\ts(\X)$ (\Cref{sec:VRextinction}) and the isometric copy of $\X$ inside of it. We then show that a result analogous to the above theorem is  also true in the VR-case~(\Cref{coro:abs-cont}).

\medskip
We highlight the bidirectional relationship between applied topology and metric geometry: on one hand, with the goal of improving their interpretability, we establish upper bounds for crucial quantities which originated in applied topology (e.g., lifespans of homology classes) using concepts from metric geometry; on the other hand, these results yield computational lower bounds or estimates for metric geometry notions inspired by persistent homology; see~\Cref{rmk:compute-fillrad}. This interplay underscores the  synergy between these fields, enabling insights that advance both domains.

\medskip
\noindent\textbf{Cores.}~Conceptually, our results establish a relationship between a given space $\X$ and another space $\Lambda_\X$, which functions as a \emph{core} for $\X$. This   represents the central thread weaving together the various parts of the paper. Specifically, estimates on the lifespans of homological features that arise as the radius of neighborhoods of $\X$ increases are derived from the distance between $\X$ and $\Lambda_\X$: 
\begin{itemize}
\item In \Cref{sec:width}, where we explore  various notions of widths, the role of the core $\Lambda_\X$ is, roughly speaking, assumed by a $k$-dimensional space closest to $\X$. In this context, the $k$-width can be interpreted as the distance between $\X$ and this approximate core, $\Lambda_\X$. In this section, we first recall the classical notions of Urysohn and Alexandrov width and then introduce a new variant which we call treewidth.

\item In \Cref{sec:spread}, after recalling the notion of spread, we introduce the notion of \"uberspread, where the core $\Lambda_\X$ is treated as an \"ubercontractible space, and the distance between $\X$ and $\Lambda_\X$ provides an upper bound on the lifespans in any degree. Notably, the condition imposed on the core in this context is the triviality of its homology groups, rather than any restriction on its dimension. 

\item In \Cref{sec:extinction}, where we analyze extinction times, we impose strong geometric conditions—such as convexity or hyperconvexity—on $\Lambda_\X$ to derive extinction bounds based on the distance between $\Lambda_\X$ and $\X$. This is done through the concepts of convexity deficiency and hyperconvexity deficiency, which we introduce therein.
\label{pg:cores}
\end{itemize}

\subsection{Related Work}
\label{sec:related} 
In this work, we aim to build a bridge between two seemingly disparate fields: applied algebraic topology and metric geometry. Both disciplines address a similar fundamental question regarding the quantification of ``shape" using distinct tools: 
\begin{center}{\em How to measure the size of a set/space/manifold?}
\end{center}

In applied algebraic topology, persistent homology is an effective tool for accomplishing this aim, and the \emph{lifespans} (or \emph{persistence}) of topological features induced by Vietoris--Rips (or \v{C}ech) filtrations are used as a measure of the size or importance of the corresponding topological features. In particular, Vietoris--Rips (or \v{C}ech) complexes were invented in order to transform a given metric space into a simplicial complex while maintaining its topological information, thus enabling an effective cohomology theory for metric spaces; see the papers by Vietoris~\cite{vietoris27}, Borsuk~\cite{borsuk1948imbedding} and Hausmann~\cite{Hausmann1995}. Numerous studies in the literature explore Vietoris–Rips complexes and Vietoris–Rips filtrations across various settings; see Latschev~\cite{latschev2001vietoris},  Chazal, Cohen-Steiner, de Silva, Guibas, M\'emoli, and Oudot~\cite{chazal2009gromov,chazal2014persistence}, Adamaszek, Adams, Frick, Gillespie, Lim, M\'emoli, Moy, Okutan, Reddy, and Wang~\cite{adamaszek2017vietoris, adamaszek2019vietoris,adams2021persistent,adams2022vietoris,lim2020vietoris,gillespie2024vietoris}, Attali, Lieutier and Salinas~\cite{ATTALI2013448}, Rieser, Bubenik and Milicevic~\cite{rieser2020vietoris,bubenik2024homotopy}, Turner~\cite{turner2019rips}, Virk \cite{virk20201,virk2022footprints,virk2021rips}, and Zaremsky~\cite{zaremsky2022bestvina}.

On the other hand, from the metric geometry side, estimating the size of a manifold has been a key problem for several decades. Gromov introduced and studied the notion of \textit{filling radius} in his seminal paper~\cite{gromov1983filling}, and several other notions of ``largeness'' in \cite{gromov1986large}. Before Gromov, certain relative, or extrinsic, versions of the filling radius were studied by Federer and Fleming~\cite{federer1960normal},  Michael and Simon~\cite{michael1973sobolev} and Bombieri and Simon~\cite{bombieri1983gehring} in geometric analysis in connection with the isoperimetric inequality. Gromov brought the filling radius to the realm of systolic geometry, and the study of scalar curvature~\cite{gromov1983filling}. Several other measures of size of a given manifold or metric space, nowadays known as \textit{widths}, were also studied and popularized by Gromov~\cite{gromov1983filling, gromov1988width}.

The interplay between filling radii, widths, and other metric invariants (including volume) has been an active research area since then. Katz determined the filling radius of spheres and other essential spaces~\cite{katz1983filling,katz1989diameter,katz1990filling,katz1991neighborhoods}. Several authors have studied the filling radii in comparison with other measures of largeness~\cite{cai1994gromov, brunnbauer2010large}. Guth proved some related conjectures of Gromov~\cite{guth2011volumes, guth2017volumes}. Sabourau, Nabutovsky, and Rotman related the filling radius with sweepouts of manifolds~\cite{sabourau2020one, nabutovsky2021sweepouts}. Bounds on filling radius in terms of Hausdorff content follow from very general isoperimetric estimates due to Liokumovich, Lishak, Nabutovsky, and Rotman~\cite{liokumovich2022filling}.

In recent years, several articles have explored connections between these two domains, addressing analogous problems with different methodologies. With this aim, Lim, M\'emoli and Okutan related the filling radius of a closed manifold to the interval corresponding to the fundamental class in the top VR persistence diagram~\cite{lim2020vietoris}.  As shown in \cite[Section 9.3.2]{lim2020vietoris}, the stability of persistence diagrams of Vietoris--Rips filtrations can be used to obtain  lower bounds for the Gromov--Hausdorff distance between spheres through considerations related to their filling radii. These  were shown not to be tight by Lim, M\'emoli and Smith~\cite{lim2021gromov}, and the polymath project ~\cite{adams2022gromov}  furthered this line work; see also Jeffs and Harrison~\cite{harrison2023quantitative} and Rodriguez-Mart\'{\i}n~\cite{martin2024gromov}.  Adams and Coskunuzer used a well-known quantity in metric geometry, Urysohn width, to estimate the lifespans in the persistence diagram of a given space~\cite{adams2022geometric}. In~\cite{virk2022footprints}, in the manifold setting, Virk studied the relation between  persistence diagrams for large degrees and lower dimensional features.

\medskip
In this paper, we aim to establish a direct connection between concepts from applied algebraic topology and metric geometry by linking various quantities used in both fields to measure the size of a metric space.  
In writing this paper, we have prioritized accessibility, aiming to bridge the gap between the two fields and foster greater collaboration and understanding.

 \subsubsection*{Ackowledgements}
 This work was supported by the following grants from the National Science Foundation (Grants \# DMS-1926686, DMS-2202584, DMS-2220613, DMS-2229417, DMS-1723003, IIS-1901360, CCF-1740761, and DMS-1547357), from the Simons Foundation (Grant \# 579977) and from the BSF (under grant \# 2020124). The project has received funding from the HORIZON EUROPE Research and Innovation programme under Marie Skłodowska-Curie grant agreement number 101107896. A major part of this work was done while the first-named author was a postdoctoral member at the Institute for Advanced Study, Princeton. We thank the anonymous reviewers for their feedback which helped enhance the presentation of this work.

\section{Background} 
\label{sec-background}

In this section, we provide an overview of the  concepts from applied algebraic topology and metric geometry that form the foundation of the paper. We give a summary of our notations in \Cref{notations} in the Appendix.

\subsection{Persistent Homology} \label{sec:PH} 

Persistent Homology (PH) is a methodology rooted in Applied Algebraic Topology that captures various structural characteristics of a given topological or metric space. Its development can be traced to the pioneering work of Frosini~\cite{frosini1990distance} and Robins~\cite{robins1999towards}, with its algorithmic framework later established by Edelsbrunner, Letscher, and Zomorodian~\cite{edelsbrunner2002topological}. Earlier manifestations  of persistent homology were retrospectively identified in the works of Morse~\cite{morse1930foundations}, Deheuvels~\cite{deheuvels1955topologie}, and Barannikov~\cite{barannikov1994framed}.

In the past two decades, PH has been employed as a powerful mathematical machinery for discovering patterns in data in applications within Machine Learning and Data Science. This advancement has been made possible by the development of efficient algorithms capable of computing PH in \emph{polynomial time}. Specifically, the total computational effort is a polynomial function of parameters related to the size of the input simplicial filtration and the maximum homology degree to be computed; see Edelsbrunner, Letscher and Zomorodian~\cite{edelsbrunner2002topological}, Harker, Mischaikow, Mrozek and Nanda~\cite{harker2014discrete,mischaikow2013morse}, and Bauer~\cite{bauer2021ripser}.

For more details on PH and its use in  various settings, see Carlsson~\cite{carlsson2009topology}, Edelsbrunner and Harer~\cite{edelsbrunner2010computational}, Chazal, de Silva, Glisse and Oudot ~\cite{chazal2016structure}, Ghrist~\cite{ghrist2018homological}, Rabad\'an and Blumberg \cite{rabadan2019topological},  Carlsson and Vejdemo-Johansson \cite{carlsson2021topological}, Joharinad and Jost~\cite{joharinad2023mathematical}, and Polterovich, Rosen, Samvelyan and Zhang~\cite{polterovich2020topological}.

\medskip

\noindent {\bf Neighborhood Notation.} 
Throughout the paper, we use both open and closed neighborhoods and adopt the following notation. Given a metric space $\mathcal{Z}$, a point $z\in \mathcal{Z}$ and $r>0$, by $B_r(z)$ we will denote the open ball of radius $r$ around $z$. When $\X$ is a subset of a metric space $\mathcal{Z}$, by  
$$\N_r(\X\subset \mathcal{Z}) := \bigcup_{x\in \X}B_r(x),$$ we will denote the open $r$-neighborhood of $\X$ in $\mathcal{Z}$ while $\overline{\N}_r(\X\subset \mathcal{Z})$ will denote the similarly defined closed $r$-neighborhood of $\X$ in $\mathcal{Z}$.

\subsubsection{Filtrations} \label{sec:filtration}

As noted in the previous section, utilizing the PH machinery requires a filtration—a nested family of topological spaces or abstract simplicial complexes—denoted by $\Delta_\bullet$. One of the most natural examples arises by considering nested neighborhoods of a subspace of a metric space, i.e., for $\X$ a subset of a metric space $\mathcal{Z}$,  the family $\{\N_r(\X\subset \mathcal{Z})\}_{r\geq 0}$ defines a filtration. Simplicial constructions are preferred in practical applications and the most common ones are Vietoris--Rips and \v{C}ech complexes and the respective filtrations they induce. While our study focuses primarily on these two types of filtrations, most of our results concerning \v{C}ech complexes naturally extend to alpha complexes; see \Cref{rmk:alpha}.

\begin{defn} [Vietoris--Rips Complexes] \label{defn:VR} Let $(\X,\dd_\X)$ be a compact metric space. For $r>0$, its Vietoris--Rips complex at scale $r$ is the abstract simplicial complex $\VR_r(\X)$ where a $k$-simplex $\sigma=[x_{i_0},x_{i_1},\dots,x_{i_k}]\in \VR_r(\X)$
if and only if $\dd_\X(x_{i_m},x_{i_n})< r$ for any $0\leq m,n\leq k$. 
\end{defn}

\begin{defn} [\v{C}ech Complexes] \label{defn:Cech} Let $\X$ be a compact subset of a metric space $\mathcal{Z}$. For $r>0$, the \v{C}ech complex at scale $r$ is the abstract simplicial complex $\C_r(\X \subset \mathcal{Z})$ where a $k$-simplex $\sigma=[x_{i_0},x_{i_1},\dots,x_{i_k}]\in \C_r(\X \subset \mathcal{Z})$ if and only if $\bigcap_{m=0}^k B_r(x_{i_m})\neq \emptyset$ in $\mathcal{Z}$. 
\end{defn}

In most scenarios $\mathcal{Z} = \Y$, a Banach space.

\medskip
Through the geometric realization functor, the nested families of simplicial complexes provided by the Vietoris--Rips and \v{C}ech complexes induce filtrations.

\begin{defn}[Filtration] \label{def:pers-fams}
A \emph{filtration} of a topological space  is a collection $\Delta_\bullet=\big(\Delta_r,\iota_{r,s} \big)_{0<r\leq s}$ such that for each $0<r\leq s$, $\Delta_r$ is a subset of the given topological space and $\iota_{r,s}: \Delta_r \hookrightarrow \Delta_s$ is the inclusion map. 
\end{defn}

When there is no risk of confusion we will simply say that $\Delta_\bullet$ is a filtration without mentioning the ambient topological space, with the understanding that it can be recovered as the colimit of $\Delta_\bullet$. In the remainder of the paper, we use the notation $\VR_\bullet(\X)$ and $\C_\bullet(\X\subset \Y)$ to denote the  filtrations  induced by the (geometric realizations of the) Vietoris--Rips and \v{C}ech complexes of $\X$, respectively.

\begin{ex}[Neighborhod Filtrations]
Another example of filtrations arising in geometric scenarios is the following. Let $\X\subset \mathcal{Z}$ be a nonempty compact subset of a metric space $(\mathcal{Z},\dd_\mathcal{Z})$. Then, one considers the filtration $\N_\bullet(\X\subset \mathcal{Z})$ given, for each $r>0$, by  the \emph{open} $r$-neighborhood $\N_r(\X\subset \mathcal{Z})$ of $\X$ in $\mathcal{Z}$. We will refer to any  filtration arising in that manner as a \emph{neighborhood filtration}. Of particular relevance to this paper will be the case when $\mathcal{Z}=\Y$, a Banach space. 
\end{ex}

Given the similarities in the definitions of Vietoris--Rips and \v{C}ech simplicial complexes, it is natural to expect certain relationships between these two types of complexes. By direct computation, it is straightforward to see that for any  compact $\X\subset \Y$, $$\C_r(\X\subset\Y)\subseteq\VR_{2r}(\X)\subseteq\C_{2r}(\X\subset\Y).$$

The Nerve Theorem directly relates the $r$-neighborhoods of $\X$ in $\Y$, $\N_r(\X\subset \Y)$, with the induced \v{C}ech simplicial complexes.

\begin{lem} [Nerve Theorem; Alexandrov~\cite{alexandroff1928allgemeinen} and Borsuk \cite{borsuk1948imbedding}] \label{lem:nerve} Let $\X$ be a compact subset of a Banach space $\Y$. For any $r>0$, $\N_r(\X\subset \Y)$ and $\C_r(\X\subset\Y)$ are homotopy equivalent to each other, i.e., $$\N_r(\X\subset \Y)\simeq \C_r(\X\subset\Y).$$ 
\end{lem}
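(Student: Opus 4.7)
The plan is to apply the classical Nerve Lemma to the open cover $\mathcal{U} = \{B_r(x)\}_{x \in \X}$ of $\N_r(\X \subset \Y)$. The essential geometric input is that norm balls in a Banach space are convex, so any finite intersection $\bigcap_{m=0}^{k} B_r(x_{i_m})$ is an intersection of convex sets, hence either empty or itself convex and open; in the nonempty case it is therefore contractible (in fact star-shaped about any of its points). This establishes that $\mathcal{U}$ is a \emph{good cover}, which is precisely the hypothesis needed for a Nerve Lemma argument.

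Next, I would identify the nerve $\mathcal{N}(\mathcal{U})$ with the \v{C}ech complex $\C_r(\X\subset \Y)$. By \Cref{defn:Cech}, a simplex $[x_{i_0},\dots,x_{i_k}]$ belongs to $\C_r(\X\subset \Y)$ if and only if $\bigcap_m B_r(x_{i_m}) \neq \emptyset$, which is exactly the combinatorial rule defining the nerve of $\mathcal{U}$. The Nerve Lemma for good open covers then produces the desired homotopy equivalence $\C_r(\X\subset \Y) \simeq \bigcup_{x \in \X} B_r(x) = \N_r(\X\subset \Y)$.

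The main technical obstacle is that the index set $\X$ is an arbitrary compact metric space and therefore potentially uncountable, whereas the most commonly cited textbook statements of the Nerve Lemma (e.g.\ Borsuk's original form) assume a countable or finite cover. I would address this in one of two ways. The cleaner route is to invoke the general version of the Nerve Lemma for good open covers on paracompact Hausdorff spaces: a partition of unity subordinate to $\mathcal{U}$ yields an explicit nerve map $\N_r(\X\subset \Y) \to |\C_r(\X\subset \Y)|$ whose fibers are contractible, and a standard acyclic-carrier or Segal-type categorical argument upgrades this to a homotopy equivalence. The more elementary route exploits compactness of $\X$: for each $\e>0$ pick a finite $\e$-net $\X_\e \subset \X$, apply the finite Nerve Lemma to the good subcover $\{B_r(x) : x \in \X_\e\}$, and then take colimits as $\e \to 0$, using that the resulting system of inclusions $\C_r(\X_\e \subset \Y) \hookrightarrow \C_r(\X\subset \Y)$ exhausts the \v{C}ech complex while the corresponding topological unions exhaust $\N_r(\X\subset \Y)$. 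Either route reduces the problem to the classical statement and yields the lemma.
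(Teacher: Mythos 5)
The paper does not prove this lemma; it simply cites it as classical (Alexandroff, Borsuk), so there is no in-paper argument to compare against. Your Route~A is the standard and correct argument: balls in a normed space are convex, finite nonempty intersections of convex sets are convex and hence contractible, so $\{B_r(x)\}_{x\in\X}$ is a good open cover of $\N_r(\X\subset\Y)$; since open subsets of a metric space are paracompact, the general Nerve Lemma for good open covers applies, and the nerve of this cover is tautologically $\C_r(\X\subset\Y)$ (noting that in a normed space distinct centers give distinct balls, so indexing by $\X$ is unambiguous). That is a complete proof.

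Route~B, however, has a genuine gap as stated. You claim that the inclusions $\C_r(\X_\e\subset\Y)\hookrightarrow\C_r(\X\subset\Y)$ ``exhaust'' the \v{C}ech complex, but they do not: the union $\bigcup_\e \C_r(\X_\e\subset\Y)$ is the subcomplex $\C_r(D\subset\Y)$ spanned by the countable set $D=\bigcup_\e\X_\e$, which omits every vertex of $\X\setminus D$ (and $\X$ is typically uncountable). So the colimit on the simplicial side is $\C_r(D\subset\Y)$, not $\C_r(\X\subset\Y)$. The colimit on the topological side \emph{does} exhaust $\N_r(\X\subset\Y)$ (since $D$ is dense, $\N_r(D\subset\Y)=\N_r(\X\subset\Y)$), which creates an asymmetry your argument does not address. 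To salvage Route~B you would need an additional step showing that the inclusion $\C_r(D\subset\Y)\hookrightarrow\C_r(\X\subset\Y)$ is a homotopy equivalence (for instance, by observing that the geometric realization of $\C_r(\X\subset\Y)$ is the directed colimit over \emph{all} finite subsets $F\subset\X$, that any compact set in the realization lands in a finite stage, and then comparing weak homotopy types), which is more work than ``take colimits.'' In short: Route~A is right; Route~B needs repair.
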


There  are ``persistent", or ``functorial'', versions of this result; see  Bauer, Kerber, Roll and Rolle \cite{bauer2023unified} for an overview of different variants of the functorial nerve lemma. We will use the version below (see the discussion in \cite[Remark 4.4]{lim2020vietoris}) to relate the \v{C}ech filtration $\C_\bullet(\X\subset \Y)$ 
 and the Neighborhood filtration $\N_\bullet(\X\subset \Y)$.

\begin{thm}[Persistent Nerve Theorem {\cite[Proposition 4.5]{lim2020vietoris}}]\label{thm:cech-neigh}
There exist homotopy equivalences $\varphi_{s}:\C_{s}(\X\subset \Y)\rightarrow \N_s(\X\subset \Y)$ for each $s>0$ such that for each $t>s>0$ the following diagram commutes up to homotopy:
$$\begin{tikzcd}
\C_s(\X\subset \Y) \arrow[r, hook] \arrow[d, "\varphi_{s}"' , rightarrow]
& \C_t(\X\subset \Y) \arrow[d, "\varphi_{t}"]\\
\N_s(\X\subset \Y)\arrow[r,hook] & \N_t(\X\subset \Y)
\end{tikzcd}$$

\end{thm}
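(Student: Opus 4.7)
The plan is to build, for each $s>0$, a ``Mayer--Vietoris blowup'' space $M_s$ that fibers over both $|\C_s(\X\subset\Y)|$ and $\N_s(\X\subset\Y)$ via maps with contractible fibers, and then to exploit the strict functoriality of this construction in $s$ to upgrade the classical Nerve Lemma to a persistent statement.

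First I would set, for each simplex $\sigma=[x_0,\ldots,x_k]\in\C_s(\X\subset\Y)$, the convex open set $U^s_\sigma := \bigcap_{i=0}^k B_s(x_i)\subset \Y$, which is nonempty by the definition of the \v{C}ech complex. I would define
$$M_s := \Bigl(\bigsqcup_{\sigma\in\C_s(\X\subset\Y)}|\sigma|\times U^s_\sigma\Bigr)\big/\sim,$$
gluing $|\tau|\times U^s_\tau$ into $|\sigma|\times U^s_\sigma$ along the face inclusion $|\tau|\hookrightarrow|\sigma|$ and the reverse inclusion $U^s_\sigma\hookrightarrow U^s_\tau$ whenever $\tau\subset\sigma$. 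There are then natural projections $p_1^s\colon M_s\to|\C_s(\X\subset\Y)|$ (forgetting the $\Y$-coordinate) and $p_2^s\colon M_s\to\N_s(\X\subset\Y)$ (forgetting the simplicial coordinate). Next I would show both are homotopy equivalences: the fiber of $p_1^s$ over a point in the relative interior of $\sigma$ is $U^s_\sigma$, an intersection of convex balls in $\Y$, hence convex and contractible; the fiber of $p_2^s$ over $y\in\N_s(\X\subset\Y)$ is the full simplex on the set $\{x\in\X: y\in B_s(x)\}$, which is contractible. A standard diagram-of-contractible-spaces argument (via a partition of unity subordinate to $\{B_s(x)\}_{x\in\X}$, or equivalently Segal's realization theorem) would then upgrade these fiberwise contractions to global homotopy equivalences.

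Crucially, for $0<s\le t$ one has $B_s(x)\subset B_t(x)$, hence $U^s_\sigma\subset U^t_\sigma$ for every $\sigma\in\C_s(\X\subset\Y)\subset\C_t(\X\subset\Y)$, and this induces a natural inclusion $M_s\hookrightarrow M_t$ making both squares in
$$\begin{tikzcd}
|\C_s(\X\subset\Y)|\arrow[d,hook] & M_s\arrow[l,"p_1^s"']\arrow[r,"p_2^s"]\arrow[d,hook] & \N_s(\X\subset\Y)\arrow[d,hook]\\
|\C_t(\X\subset\Y)| & M_t\arrow[l,"p_1^t"']\arrow[r,"p_2^t"] & \N_t(\X\subset\Y)
\end{tikzcd}$$
commute strictly. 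Choosing homotopy inverses $q_s$ of $p_1^s$ and setting $\varphi_s := p_2^s\circ q_s$ yields the required homotopy equivalence, and the strict commutativity of the ladder above forces the resulting $\varphi_\bullet$-square to commute up to homotopy. The hard part, I expect, will be verifying that $p_1^s$ and $p_2^s$ are honest homotopy equivalences despite the cover $\{B_s(x)\}_{x\in\X}$ being potentially uncountable when $\X$ is infinite; this is essentially the classical Nerve Lemma content, and it will be handled by equipping $M_s$ with a CW-like paracompact topology so that the standard good-cover/quasi-fibration arguments apply and produce the homotopy inverses $q_s$ in a way that is coherent across different values of $s$.
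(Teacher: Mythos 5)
The paper itself supplies no proof of this theorem; it is quoted verbatim from \cite[Proposition~4.1]{lim2020vietoris}, so there is no internal argument to compare against. Your proof via the Mayer--Vietoris blowup complex is a correct and standard way to establish the persistent (functorial) nerve theorem, and it is essentially the route taken in the cited reference and the earlier literature it rests on. The decisive point you identify---that the blowup $M_\bullet$ together with its two projections is \emph{strictly} functorial in $s$, so that once homotopy inverses $q_s$ of $p_1^s$ are chosen, the homotopy-commutativity of the $\varphi$-square follows from a formal diagram chase through the two strictly commuting squares, for arbitrary and independent choices of $q_s$, $q_t$---is exactly right and is what makes this approach clean. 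The technical caveats you flag at the end are genuine but standard: $M_s$ should carry the colimit (weak) topology so that the fiberwise contractions assemble into global homotopy equivalences, and showing $p_2^s$ is a homotopy equivalence uses that the open cover $\{B_s(x)\}_{x\in\X}$ of $\N_s(\X\subset\Y)$ is numerable, which holds since $\N_s(\X\subset\Y)$ is metrizable (as a subset of a Banach space) and hence paracompact.
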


Note that Theorem \ref{thm:cech-neigh}  implies that the persistent homology of $\C_\bullet(\X\subset \Y)$ is isomorphic to that of $\N_\bullet(\X\subset \Y)$, a fact that we will repeatedly use in the sequel. 

\medskip Since any compact metric space $\X$ can be regarded as a subset  of $L^\infty(\X)$ (via its Kuratowski embedding, see  \Cref{def:kuratowski}), one obtains an analogous result providing a connection between the Vietoris--Rips filtration and the filtration $\N_\bullet(\X\subset L^\infty(\X))$  consisting of nested neighborhoods 
$$\big\{\N_s(\X\subset L^\infty(\X))\subseteq \N_t(\X \subset L^\infty(\X))\big\}_{0<s\leq t}$$ of $\X\subset L^\infty(\X)$.

\begin{cor}[{\cite[Theorem 4.1]{lim2020vietoris}}]\label{thm:vr-neigh}
There exist homotopy equivalences $\varphi_{s}:\VR_{2s}(\X)\rightarrow \N_s(\X\subset L^\infty(\X))$ for each $s>0$ such that for each $t>s>0$ the following diagram commutes up to homotopy:
$$\begin{tikzcd}
\VR_{2s}(\X) \arrow[r, hook] \arrow[d, "\varphi_{s}"' , rightarrow]
& \VR_{2t}(\X) \arrow[d, "\varphi_{t}"]\\
\N_s(\X\subset L^\infty(\X)) \arrow[r,hook] & \N_t(\X\subset L^\infty(\X)
\end{tikzcd}$$
\end{cor}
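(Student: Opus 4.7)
The plan is to deduce the corollary from the Persistent Nerve Theorem (\Cref{thm:cech-neigh}) applied to the Kuratowski embedding $\kappa:\X\hookrightarrow L^\infty(\X)$, by identifying the \v{C}ech filtration of $\kappa(\X)\subset L^\infty(\X)$ with the Vietoris--Rips filtration of $\X$ at every scale $s>0$. Since $\kappa$ is an isometric embedding and $\X$ is compact, $\kappa(\X)$ is a compact subset of the Banach space $L^\infty(\X)$, so \Cref{thm:cech-neigh} is applicable.

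The key intermediate step is the equality of simplicial complexes
\[
\C_s\!\left(\kappa(\X)\subset L^\infty(\X)\right) \;=\; \VR_{2s}(\X)
\]
for every $s>0$. The containment ``$\subseteq$'' is a direct application of the triangle inequality: if the open balls $B_s(\kappa(x_{i_m}))$ share a common point in $L^\infty(\X)$, then $\|\kappa(x_{i_m})-\kappa(x_{i_n})\|_\infty<2s$ pairwise, whence $\dd_\X(x_{i_m},x_{i_n})<2s$ because $\kappa$ is an isometry. For the reverse containment, I would invoke the classical hyperconvexity of $L^\infty$ spaces (Aronszajn--Panitchpakdi): a finite collection of closed balls of common radius in $L^\infty(\X)$ with pairwise nonempty intersection admits a common point. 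Concretely, given $\dd_\X(x_{i_m},x_{i_n})<2s$ for all $m,n$ among a finite simplex, I would pick $\epsilon>0$ so small that $\dd_\X(x_{i_m},x_{i_n})\leq 2(s-\epsilon)$ for every pair; hyperconvexity then gives a point in $\bigcap_m \overline{B}_{s-\epsilon}(\kappa(x_{i_m}))$, and such a point automatically lies in $\bigcap_m B_s(\kappa(x_{i_m}))$, certifying the simplex as a member of the \v{C}ech complex.

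With this identification in hand, \Cref{thm:cech-neigh} applied to $\kappa(\X)\subset L^\infty(\X)$ produces homotopy equivalences $\varphi_s:\C_s(\kappa(\X)\subset L^\infty(\X))\to \N_s(\kappa(\X)\subset L^\infty(\X))$ commuting up to homotopy with the inclusions in both the \v{C}ech and the neighborhood filtrations. Substituting $\VR_{2s}(\X)$ in place of $\C_s(\kappa(\X)\subset L^\infty(\X))$ via the simplicial identification above yields precisely the maps and the up-to-homotopy commutative square in the statement. The main delicate point in the plan is the appeal to hyperconvexity of $L^\infty(\X)$ (and checking that the open-ball version of the Helly property is a painless consequence); once that is in place, everything else is a straightforward combination of the functoriality of the nerve construction and the isometry property of the Kuratowski embedding.
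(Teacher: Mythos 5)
Your proposal is correct and follows exactly the route the paper indicates: identify $\C_s(\kappa(\X)\subset L^\infty(\X))$ with $\VR_{2s}(\X)$ using the isometry property of the Kuratowski embedding in one direction and hyperconvexity of $L^\infty(\X)$ in the other, and then apply the Persistent Nerve Theorem (\Cref{thm:cech-neigh}) to $\kappa(\X)\subset L^\infty(\X)$. The careful $\epsilon$-shrinking step to pass from the closed-ball statement of hyperconvexity (\Cref{def:hyper}) to the open balls in the \v{C}ech complex is exactly the point one needs to notice, and you handled it correctly.
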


\begin{rmk}\label{rem:inj}
The result above remains valid if $L^\infty(\X)$ is substituted by any other \emph{injective} metric space admitting an isometric embedding of $\X$; see \Cref{sec:ts} for the definition and  \Cref{sec:VRextinction}, where we in particular utilize the tight span $\ts(\X)$ as one such choice. Also, the proof of \Cref{thm:vr-neigh} yields that $\C_\bullet(\X\subset L^\infty(\X))$ and $\VR_{2\bullet}(\X)$ are naturally homotopy equivalent; see~\cite[Section 4]{lim2020vietoris}.
\end{rmk}

\begin{rmk} [Alpha Complexes] \label{rmk:alpha} Note that while we only discuss VR and \v{C}ech filtered complexes in our paper, our results on \v{C}ech lifespans naturally apply to lifespans of homology classes induced by alpha complexes as $\A_r(\X\subset\Y)\simeq\C_r(\X\subset\Y)$ where $\A_r(\X\subset\Y)$ represents the alpha complex induced by $\X$ with distance threshold $r\geq 0$; see Edelsbrunner and Harer~\cite[III.4]{edelsbrunner2010computational}.
\end{rmk}

\subsubsection{Persistent Homology} \label{sec:PHbackground} Here, we recall  basic notions pertaining to persistent homology that are necessary for our setting. We will follow the presentation from~\cite[Section 2.1]{lim2020vietoris}.

\begin{defn}[Persistence Module]
A \emph{persistence module} $(V_r,\Phi_{r,s})_{0<r\leq s}$ over $\R_{>0}$ is a family of $\mathbb{F}$-vector spaces $V_r$ for some field $\mathbb{F}$ with morphisms $\Phi_{r,s}:V_r \to V_s$ for each $r \leq s$ such that 
\begin{itemize}
\item $\Phi_{r,r}=\mathrm{id}_{V_r}$,
\item $\Phi_{s,t}\circ \Phi_{r,s}=\Phi_{r,t}$ for each $r\leq s \leq t$.
\end{itemize}
\end{defn}
For conciseness we will denote by $V_\bullet$ the persistence module given by $(V_r,\Phi_{r,s})_{0<r\leq s}$. The morphisms $\Phi_{\bullet,\bullet}$ are referred to as the \emph{structure maps} of $V_\bullet$.  Note that a persistence module $V_\bullet$ can be regarded as a functor from the poset $(\mathbb{R}_{>0},\leq)$ to the category of vector spaces. 

\begin{defn}[Interval Persistence Module]
		Given an interval $I$ in $\R_{>0}$  and a field $\mathbb{F}$, the \emph{interval persistence module} induced by $I$ is the persistence module $\mathbb{F}_\bullet[I]$  is defined as follows: The vector space at $r$ is $\mathbb{F}$ if $r$ is in $I$ and zero otherwise. Given $r \leq s$, the morphism  corresponding to the pair $(r,s)$ is the identity if $r,s$ are both contained in $I$ and zero otherwise.  
	\end{defn}

 \begin{defn}[Barcode and Persistence Diagram]
For a given persistence module $V_\bullet$, if there is a multiset of intervals $(I_\lambda)_{\lambda\in\Lambda}$ such that $V_\bullet$ is isomorphic to $\bigoplus_{\lambda\in \Lambda} \mathbb{F}_\bullet[I_\lambda]$, then that multiset is referred to as a \emph{(persistence) barcode} associated to the persistence module $V_\bullet$. Persistence Modules admitting such a multiset of intervals are said to be \emph{interval decomposable}. The \emph{persistence diagram} of $V_\bullet$ is then given as the multiset of points $(b_\lambda,d_\lambda)\in \mathbb{R}^2$, where $b_\lambda$,  is the left endpoint of $I_\lambda$ and $d_\lambda$ is its right endpoint.\footnote{Not every persistence module is interval decomposable; see Crawley-Boevey \cite{crawley2015decomposition} for more details.}
	\end{defn}

In applied algebraic topology, many persistence modules arise as follows.

\begin{defn} [Persistent Homology of a Filtration] \label{defn:PH} For any $k\geq 0$, applying the $k$-dimensional \emph{reduced} homology functor (with coefficients in a field $\mathbb{F}$) to a filtration $\Delta_\bullet =\big(\Delta_r,\iota_{r,s} \big)_{0<r\leq s}$ produces the persistence (homology) module $\Hom_k(\Delta_\bullet;\mathbb{F})=(\Hom_k(\Delta_r;\mathbb{F}),\Phi^k_{r,s})_{0<r\leq s}$ where the morphisms $\Phi^k_{r,s}$ are those induced by $\iota_{r,s}$.\footnote{Note that we are using reduced homology in our definition in order to dispense with the usual infinite length bar at the level of degree zero persistent homology.}
\end{defn} 

\begin{framed}
    In what follows we will drop the field $\mathbb{F}$ from the notation since all of our results hold for an arbitrary choice of $\mathbb{F}$. 
\end{framed}

Under suitable assumptions, the persistence modules obtained from filtrations, as described above, are interval decomposable.  In particular, the persistence modules obtained from neighborhood filtrations of compact subsets of a Banach space are interval decomposable (so that they admit barcodes).

\begin{thm}[{\cite[Theorem 1]{lim2020vietoris}}]\label{theorem:pbarcode}
		Assume $\X$ is a compact subset of a Banach space $\Y$. Then there is a (unique) persistence barcode associated to  the persistence module $\Hom_k(\C_\bullet(\X\subset \Y))$. In particular, $\Hom_k(\V_\bullet(\X))$ admits (unique) persistence barcode.\footnote{In \cite[Theorem 1]{lim2020vietoris} the authors only contemplate the case of the $\Y$ being equal to $L^\infty(\X)$ for some compact metric space $\X$. However, the proof of their result directly applies to the setting in the statement.}  
	\end{thm}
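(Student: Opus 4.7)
The plan is to reduce the statement to checking that the persistence module $\Hom_k(\C_\bullet(\X\subset\Y))$ is q-tame, i.e.\ that every internal structure map between strictly separated scales has finite rank. Once q-tameness is established, the existence and uniqueness of a persistence barcode follow from the standard decomposition theorem for q-tame persistence modules (Chazal--de Silva--Glisse--Oudot; or Crawley-Boevey's theorem in the pointwise finite-dimensional case). The uniqueness of the multiset of intervals under this hypothesis is precisely the content of that decomposition theorem.

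First, by the Persistent Nerve Theorem (\Cref{thm:cech-neigh}), the persistence modules $\Hom_k(\C_\bullet(\X\subset\Y))$ and $\Hom_k(\N_\bullet(\X\subset\Y))$ are isomorphic, so it suffices to prove q-tameness for the neighborhood filtration. Fix $0<s<t$ and let $\delta:=(t-s)/2>0$. Since $\X$ is compact, it is totally bounded, so there exists a finite subset $F\subset\X$ with $\dhaus(F,\X)<\delta$. This yields the chain of inclusions
\[
\N_s(\X\subset\Y)\hookrightarrow \N_{s+\delta}(F\subset\Y)\hookrightarrow \N_t(\X\subset\Y),
\]
so the structure map $\Phi^k_{s,t}\colon \Hom_k(\N_s(\X\subset\Y))\to \Hom_k(\N_t(\X\subset\Y))$ factors through $\Hom_k(\N_{s+\delta}(F\subset\Y))$.

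Since open balls in a Banach space are convex, and any nonempty finite intersection of convex sets is convex (hence contractible), the cover $\{B_{s+\delta}(x)\}_{x\in F}$ of $\N_{s+\delta}(F\subset\Y)$ is a good cover. The classical Nerve Lemma (\Cref{lem:nerve}) then gives the homotopy equivalence $\N_{s+\delta}(F\subset\Y)\simeq \C_{s+\delta}(F\subset\Y)$, and the latter is a finite simplicial complex (since $F$ is finite), hence has finite-dimensional homology in every degree. Therefore $\Phi^k_{s,t}$ factors through a finite-dimensional vector space and must have finite rank, establishing q-tameness. Invoking the decomposition theorem then delivers the unique persistence barcode for $\Hom_k(\C_\bullet(\X\subset\Y))$. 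The Vietoris--Rips case follows by applying the same argument to the neighborhood filtration of the Kuratowski embedding $\X\hookrightarrow L^\infty(\X)$ and using \Cref{thm:vr-neigh} to transfer the conclusion back to $\Hom_k(\VR_\bullet(\X))$.

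The main obstacle is the q-tameness step: one must exhibit a finite-rank factorization of every structure map $\Phi^k_{s,t}$ with $s<t$. Our approach uses compactness of $\X$ to produce a finite $\delta$-net $F$ and then leverages the good-cover property of open balls in a Banach space to replace the (infinite-dimensional) neighborhood $\N_{s+\delta}(F\subset\Y)$ by its finite nerve. A subtle point worth double-checking is that the Nerve Lemma cited indeed applies to open-ball covers in a general (possibly infinite-dimensional) Banach space; this works because convexity of all finite intersections provides a numerable good cover, so the lemma applies verbatim.
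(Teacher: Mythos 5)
Your q-tameness argument is correct and is the same approach taken in \cite[Theorem 1]{lim2020vietoris}: pass to the open neighborhood filtration via the persistent nerve theorem, interpolate a finite net $F$, and factor $\Phi^k_{s,t}$ through the finitely generated nerve of the good cover of $\N_{s+\delta}(F\subset\Y)$ by convex balls. Your remark about numerability is also fine, since a finite open cover of a metric (hence paracompact) space admits a subordinate partition of unity.

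The gap is in the final step, where you invoke ``the standard decomposition theorem for q-tame persistence modules'' to get a barcode. In the sense used in this paper, a barcode is an isomorphism $V_\bullet \cong \bigoplus_\lambda \mathbb{F}_\bullet[I_\lambda]$, not merely a persistence diagram. The Chazal--de Silva--Glisse--Oudot theory assigns a well-defined persistence \emph{diagram} (a measure on the half-plane) to any q-tame module, but q-tame modules over $\mathbb{R}$ need not admit an interval decomposition, and Crawley-Boevey's decomposition theorem requires pointwise finite-dimensionality, which your argument does not establish (the factorization shows only that each structure map has finite rank; as $s\to r^{-}$ the bound from the finite net blows up, so there is no immediate control on $\dim \Hom_k(\N_r(\X\subset\Y))$ itself). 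The proof in \cite{lim2020vietoris} closes this gap by using an additional structural property of the open neighborhood filtration: since $\N_r(\X\subset\Y)=\bigcup_{s<r}\N_s(\X\subset\Y)$ and singular homology commutes with directed colimits, the module satisfies $\Hom_k(\N_r)=\varinjlim_{s<r}\Hom_k(\N_s)$, i.e., it equals its own radical. They then invoke the result of Chazal--Crawley-Boevey--de Silva that a q-tame persistence module which coincides with its radical is interval-decomposable. Without this additional continuity-from-the-left input, q-tameness alone does not yield the barcode, so your proof as written is incomplete at this step.
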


We will henceforth use $\PD_k(\X)$ and $\wc{\PD}_k(\X\subset \Y)$ to respectively denote the persistence diagrams of the Vietoris--Rips and \v{C}ech filtrations of $\X$.

\subsubsection{Stability Theorems} \label{sec:stability} 
Persistence diagrams are an effective methodology for encoding  topological properties of the space $\X$ and its neighborhoods in $\Y$. Persistence diagrams are stable, as expressed by the following stability theorems. Informally, these state that if the shape and the size of two spaces are similar, then their persistence diagrams are close to each other. To give formal statements, let $\dd_b(\cdot,\cdot)$ denote the bottleneck distance between persistence diagrams; see Edelsbrunner and Harer  \cite{edelsbrunner2010computational}. Let $\dd_\h^\Y$ be the Hausdorff distance between two subsets of the same Banach space $\Y$, and let $\dd_{\mathrm{GH}}$ be the Gromov--Hausdorff distance between two metric spaces; see Burago, Burago and Ivanov \cite[Chapter 7]{burago2001course}.

\begin{lem} [{Stability Theorem---VR; \cite[Theorem 3.1]{chazal2009gromov} and \cite[Theorem~5.2]{chazal2014persistence}}] \label{lem:stability1} Let $\X$ and $\X'$ be two compact metric spaces. Then, 
$$\dd_b(\PD_k(\X),\PD_k(\X'))\leq 2\,\dd_{\mathrm{GH}}(\X,\X').$$ 
\end{lem}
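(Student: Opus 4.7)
The plan is to deduce the theorem from the algebraic stability theorem for persistence modules, by exhibiting an $\epsilon$-interleaving between the Vietoris--Rips persistent homology modules of $\X$ and $\X'$ whenever $2\,\dd_{\mathrm{GH}}(\X,\X')<\epsilon$. Recall that the Gromov--Hausdorff distance admits a characterization in terms of correspondences: $\dd_{\mathrm{GH}}(\X,\X')$ equals half the infimum of the distortion $\mathrm{dis}(R)=\sup_{(x,y),(x',y')\in R}|\dd_\X(x,x')-\dd_{\X'}(y,y')|$ taken over all correspondences $R\subset \X\times \X'$. So fix any correspondence $R$ with $\mathrm{dis}(R)<\epsilon$.

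Next I would use $R$ to build simplicial maps between the two filtrations at shifted scales. Pick (using the axiom of choice) functions $\varphi\colon \X\to \X'$ and $\psi\colon \X'\to \X$ such that $(x,\varphi(x))\in R$ and $(\psi(y),y)\in R$ for every $x,y$. For any simplex $\sigma=[x_0,\dots,x_k]\in \VR_r(\X)$, the distortion bound yields $\dd_{\X'}(\varphi(x_i),\varphi(x_j))<\dd_\X(x_i,x_j)+\epsilon<r+\epsilon$, so $\varphi$ induces a simplicial map
\[
\varphi_r\colon \VR_r(\X)\longrightarrow \VR_{r+\epsilon}(\X'),
\]
and symmetrically one gets $\psi_r\colon \VR_r(\X')\to \VR_{r+\epsilon}(\X)$. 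These maps are strictly compatible with the filtration inclusions.

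Then I would verify the interleaving relations at the homology level. The composition $\psi_{r+\epsilon}\circ \varphi_r$ sends a vertex $x$ to $\psi(\varphi(x))\in \X$, and by the definition of correspondences and the distortion bound one has $\dd_\X(x,\psi(\varphi(x)))<\epsilon$. It follows that for any $\sigma=[x_0,\dots,x_k]\in \VR_r(\X)$, the union $\sigma\cup(\psi\circ\varphi)(\sigma)$ spans a simplex in $\VR_{r+2\epsilon}(\X)$; hence $\psi_{r+\epsilon}\circ \varphi_r$ is contiguous (and therefore simplicially homotopic) to the inclusion $\VR_r(\X)\hookrightarrow \VR_{r+2\epsilon}(\X)$. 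The symmetric statement holds for $\varphi_{r+\epsilon}\circ \psi_r$. Applying the reduced $k$-dimensional homology functor, these contiguity relations turn into commuting triangles, which is exactly the definition of an $\epsilon$-interleaving between the persistence modules $\Hom_k(\VR_\bullet(\X))$ and $\Hom_k(\VR_\bullet(\X'))$, once one accounts for the factor of $2$ in the scale parameter used by \Cref{defn:VR} (all distances in $\VR_r$ correspond to a ball radius of $r/2$, so the shift on diagrams is $2\epsilon/2=\epsilon$ in the convention needed for interleaving; a careful bookkeeping here yields the factor of $2$ in the final bound). Feeding this into the algebraic stability theorem of Chazal--de~Silva--Glisse--Oudot gives $\dd_b(\PD_k(\X),\PD_k(\X'))\le 2\epsilon$, and taking the infimum over admissible $\epsilon$ yields the claim.

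The main obstacle I anticipate is the bookkeeping of the scaling convention: the paper defines $\VR_r$ using pairwise distances strictly less than $r$ rather than $r/2$ (the ball-radius convention), so one has to be careful to track where the factor of $2$ enters. A secondary subtlety is justifying that $\varphi_r$ and $\psi_r$, though not canonical, yield a well-defined interleaving on homology—this is handled by the contiguity argument above, since contiguous simplicial maps induce equal maps on homology, so any choice of section of $R$ produces the same interleaving maps on $\Hom_k$.
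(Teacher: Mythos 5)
Your overall strategy -- building an interleaving directly from a low-distortion correspondence -- is the classical argument from Chazal--de~Silva--Glisse--Oudot (the second of the two references the paper cites), and it is a genuinely different route from the one the paper itself indicates. The paper treats this lemma as cited, and its subsequent remark explains that it can be derived as a \emph{consequence} of the \v{C}ech stability statement (\Cref{lem:stability2}) by passing through a common ambient space, whereas you prove VR stability directly, without any ambient Banach space. Both routes are valid; yours is more self-contained, while the paper's route emphasizes that the \v{C}ech version is the sharper and more primitive statement.

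There is, however, a concrete bookkeeping error in your last paragraph, and your explanation of ``where the factor of $2$ enters'' is misattributed. Under the paper's convention (\Cref{defn:VR}: a simplex belongs to $\VR_r$ iff all pairwise distances are $<r$), a correspondence $R$ with $\mathrm{dis}(R)<\epsilon$ gives simplicial maps $\VR_r(\X)\to\VR_{r+\epsilon}(\X')$ and back, and your contiguity computation shows the compositions agree (on homology) with the inclusions $\VR_r\hookrightarrow\VR_{r+2\epsilon}$; that is an $\epsilon$-interleaving, so algebraic stability gives $\dd_b\le\epsilon$, \emph{not} $\dd_b\le 2\epsilon$. There is no ``ball-radius $r/2$'' conversion to do here -- that heuristic applies to the \v{C}ech/VR comparison, not to VR against itself. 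The factor of $2$ in the final inequality comes solely from the identity $\dd_{\mathrm{GH}}(\X,\X')=\tfrac12\inf_R\mathrm{dis}(R)$, which you already quoted: taking the infimum of $\dd_b\le\epsilon$ over all $\epsilon>\mathrm{dis}(R)$ and then over all $R$ gives $\dd_b\le 2\,\dd_{\mathrm{GH}}(\X,\X')$. As written, combining $\dd_b\le 2\epsilon$ with $\inf\epsilon=2\,\dd_{\mathrm{GH}}$ would only yield $\dd_b\le 4\,\dd_{\mathrm{GH}}$, which is weaker than the claim -- so the last two sentences need to be corrected even though your interleaving construction itself is sound.
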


\begin{lem} [{Stability Theorem---\v{C}ech; ~\cite[Theorem~5.6]{chazal2014persistence}}] \label{lem:stability2} Let $\X,\X'$ be two compact subsets of a Banach space $\Y$. Then, $$\dd_b(\wc{\PD}_k(\X\subset \Y),\wc{\PD}_k(\X'\subset \Y))\leq \dd^\Y_{\mathrm{H}}(\X,\X').$$ 
\end{lem}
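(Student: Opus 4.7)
The plan is to reduce the stability estimate for \v{C}ech persistence diagrams to the algebraic stability (isometry) theorem for persistence modules, passing through the Persistent Nerve Theorem already recorded as \Cref{thm:cech-neigh}. Throughout, set $\delta := \dd^\Y_{\mathrm{H}}(\X,\X')$ and assume $\delta$ is finite (otherwise the inequality is vacuous).

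First I would work at the level of neighborhood filtrations inside $\Y$. The definition of Hausdorff distance gives, for every $r>0$, the inclusions
\[
\N_r(\X\subset \Y)\ \subseteq\ \N_{r+\delta}(\X'\subset\Y)\quad\text{and}\quad \N_r(\X'\subset\Y)\ \subseteq\ \N_{r+\delta}(\X\subset\Y),
\]
since any point within distance $r$ of $\X$ is within distance $r+\delta$ of $\X'$, and conversely. These inclusions together with the standard inclusions $\N_r(\X\subset\Y)\hookrightarrow\N_{r+2\delta}(\X\subset\Y)$ (and likewise for $\X'$) fit together into a pair of natural transformations between the two neighborhood filtrations whose compositions agree with the ``shift by $2\delta$'' structure maps. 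After applying the reduced $k$-th homology functor, this yields a $\delta$-interleaving of the persistence modules $\Hom_k(\N_\bullet(\X\subset\Y))$ and $\Hom_k(\N_\bullet(\X'\subset\Y))$.

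Next I would transport this interleaving to the \v{C}ech side. The Persistent Nerve Theorem (\Cref{thm:cech-neigh}) provides homotopy equivalences $\varphi_s$ that are compatible with inclusions up to homotopy; taking $\Hom_k(-)$ turns the homotopy-commutative squares into strictly commutative squares of $\mathbb{F}$-vector spaces, yielding a natural isomorphism of persistence modules
\[
\Hom_k(\C_\bullet(\X\subset\Y))\ \cong\ \Hom_k(\N_\bullet(\X\subset\Y)),
\]
and analogously for $\X'$. Consequently the two \v{C}ech persistent homology modules are also $\delta$-interleaved. By \Cref{theorem:pbarcode} both modules admit barcodes, so one may invoke the algebraic stability theorem (the isometry theorem for persistence modules) to conclude
\[
\dd_b\bigl(\wc{\PD}_k(\X\subset\Y),\wc{\PD}_k(\X'\subset\Y)\bigr)\ \leq\ \delta\ =\ \dd^\Y_{\mathrm{H}}(\X,\X').
\]

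The main obstacle I anticipate is the bookkeeping around the Persistent Nerve Theorem. The squares in \Cref{thm:cech-neigh} commute only up to homotopy, so to obtain a genuine (not merely homotopy-coherent) $\delta$-interleaving between \v{C}ech persistence modules one must carefully apply $\Hom_k$ and combine the four relevant squares (two for each direction of the Hausdorff inequality, plus the naturality of $\varphi_\bullet$) into strictly commuting diagrams of vector spaces; everything else is either the definition of interleaving or a black-box appeal to algebraic stability.
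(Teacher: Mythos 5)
The paper does not prove this lemma itself; it cites it to Chazal, de Silva, and Oudot (Theorem~5.6 of the 2014 reference). Your argument is correct and is essentially the standard proof from that reference: the Hausdorff bound $\delta$ gives cross-inclusions $\N_r(\X\subset\Y)\subseteq\N_{r+\delta}(\X'\subset\Y)$ and vice versa, which produce a $\delta$-interleaving of the neighborhood-filtration homology modules; the Persistent Nerve Theorem (\Cref{thm:cech-neigh}) transfers this (after applying $\Hom_k$, which turns homotopy-commutativity into honest commutativity) to an isomorphism with the \v{C}ech modules, and \Cref{theorem:pbarcode} supplies the tameness needed to invoke algebraic stability. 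Your concern about bookkeeping around the homotopy-only commutativity is exactly why one applies homology before assembling the interleaving diagrams, and you handle it correctly.
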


Note that Lemma \ref{lem:stability2} implies Lemma \ref{lem:stability1}. Indeed, this was implicitly used in the proof of \cite[Theorem 3.1]{chazal2009gromov}. Notice that the coefficient $2$ does not appear in the second stability theorem.

\subsubsection{Homological Spectra}\label{sec:homological-spec}

As described in the introduction, the chief goal of our paper is to provide effective bounds for the lifetime of all homology classes that appear along a (geometric) filtration of a metric space. We will formulate and realize this goal in a setting that encompasses, but is more general than,  persistence diagrams. 

\begin{rmk}\label{rmk:spec}
Notice that it is not true that the only homology classes that show up across the filtration are those coming from the initial space. One well known example is that of the circle $S^1$ (with its geodesic distance) and the Vietoris--Rips filtration. Indeed, as shown by  Adamaszek and Adams in \cite{adamaszek2017vietoris}, whereas $\VR_t(S^1)$ has the homotopy type of $S^1$ for $t\in\left(0,\tfrac{2\pi}{3}\right]$, the homotopy type is that of $S^3$ as soon as $t\in\left(\tfrac{2\pi}{3},\tfrac{4\pi}{5}\right]$. In fact, they show that $\VR_\bullet(S^1)$ eventually attains the homotopy types of all odd-dimensional spheres.
\end{rmk}

\medskip
We recall some additional definitions and results from \cite{lim2020vietoris}.  

\begin{defn}\label{defn:spectrum}
For an integer $k\geq 0$, a given field $\mathbb{F}$, and a  filtration  $\Delta_\bullet=\big(\Delta_r,\iota_{r,s} \big)_{0<r\leq s}$, let 
\begin{equation}\label{eq:spec}
\mathrm{Spec}_k(\Delta_\bullet):=\bigcup_{r>0}\bigg(\Hom_k(\Delta_r;\mathbb{F})\backslash\{0\}\times\{r\}\bigg)
\end{equation} be the $k\textsuperscript{th}$ \emph{homological spectrum} of $\Delta_\bullet$ (with coefficients in $\mathbb{F})$. 
\end{defn}
Now, fix an arbitrary $(\omega,s)\in\mathrm{Spec}_k(\Delta_\bullet)$. Then, let
\begin{align}\label{eq:lifespan}
    b_{(\omega,s)}&:=\inf\{r>0:r\leq s\text{ and }\exists\text{ nonzero }\omega_r\in\Hom_k(\Delta_r)\text{ such that }(\iota_{r,s})_\ast(\omega_r)=\omega\},\\
    d_{(\omega,s)}&:=\sup\{t>0:t\geq s\text{ and }\exists\text{ nonzero }\omega_t\in\Hom_k(\Delta_t)\text{ such that }(\iota_{s,t})_\ast(\omega)=\omega_t\}\\
   & \,\,= \sup\{t\geq s: (\iota_{s,t})_\ast(\omega)\neq 0\}. 
\end{align}

Whenever $\Delta_\bullet$ is a neighborhood filtration $\N_\bullet(\X\subset\Y)$,  as in \cite[Theorem 8]{lim2020vietoris}, one has that $b_{(\omega,s)}<s\leq d_{(\omega,s)}$.\footnote{In general, the type of intervals (open-open, closed-open, etc) one obtains depends on whether the filtration is defined via open or closed neighborhoods. Note that we've defined neighborhood filtrations via \emph{open} neighborhoods.} 
Let 
$$I_{(\omega,s)}:=
(b_{(\omega,s)},d_{(\omega,s)}].$$ Informally, the interval $I_{(\omega,s)}$ encodes the maximal region around $s \in \mathbb{R}_{>0}$ inside which the class $\omega$  is ``alive".\footnote{
Note that, since we are using reduced homology, there is no degree 0 class $\omega$ such that $d_{(\omega,s)} =\infty$.}

\begin{defn}\label{def:lifespan}
The value $b_{(\omega,s)}$ is referred to as the \emph{birth time} of $\omega$ whereas $d_{(\omega,s)}$  is the \emph{death time} of $\omega$. The value $d_{(\omega,s)}-b_{(\omega,s)}$ will be referred to as the \emph{lifespan} of $\omega.$   
\end{defn}

\begin{rmk}[A Caveat]\label{rem:closed-open-cech} We focus on the special case of neighborhood filtrations. The birth time $b_{(\omega,s)}$ was defined as the infimum of all times $r\le s$ when a ``predecessor'' of $\omega$ exists in $\Hom_k(\N_{r}(\X\subset \Y))$, and it is natural to ask whether there exists a homology class  supported on 
\[
\bigcap\limits_{b_{(\omega,s)} < r \le s} \N_{r}(\X\subset \Y) = \overline{\N}_{b_{(\omega,s)}}(\X\subset \Y)
\]
that is also homologous to $\omega$ in $\N_{s}(\X\subset \Y)$. It turns out this is not the case even for the neighborhood filtration of a compact set in $\mathbb{R}^2$. Namely, there exists a compact set $\X \subset \mathbb{R}^2$, known as the \emph{Warsaw circle} or \emph{closed topologist's sine curve},  satisfying the following counter-intuitive property: its first singular homology is zero, but every open neighborhood of it contains a homologically non-trivial circle; see Borsuk~\cite{borsuk1975theory}. Every two of those circles are homologous to each other (within the union of the two neighborhoods). But these circles do not converge, as we shrink the neighborhood, to a non-trivial homology class of $\X$, because $\Hom_1(\X) = 0$. 
The natural way to treat this ``limit circle'' is to consider  \v{C}ech homology instead of  ordinary singular homology. The \v{C}ech homology $\CHom_1(\X)$ is non-trivial, and contains the ``limit circle''.
In general, a predecessor of the class $\omega$ naturally lives in the \v{C}ech homology $\CHom_k(\overline{\N}_{b_{(\omega,s)}}(\X\subset \Y))$. Nonetheless, we consistently use singular homology throughout the paper in order to avoid overly technical details.

\end{rmk}

\begin{framed}
To ease the notational burden, we will often drop the parameter $s$ and will use the more succinct notation  $b_\omega$, $d_\omega$, $I_{\omega}$, etc.
\end{framed}

\begin{rmk}\label{rmk:not-all-omegas-in-spec}
A priori, one would expect  the collection of all intervals $$\big\{I_{\omega};\,\omega\in \mathrm{Spec}_k(\Delta_\bullet)\big\}$$ to be closely related to the $k\textsuperscript{th}$ persistence diagram of $\Delta_\bullet$. Whereas Proposition \ref{prop:relation} below establishes  a sense in which this is the case, it is not always true that all such intervals appear in the interval decomposition of $\Hom_k(\Delta_\bullet)$ (whenever it exists). An example showing this discrepancy in the case of Vietoris--Rips filtrations can be found in \cite[Example 9.16]{lim2020vietoris}.  
\end{rmk}

\begin{prop}[{\cite[Proposition 9.2]{lim2020vietoris-arxiv}}]\label{prop:relation}
Let $\Delta_\bullet=\big(\Delta_r,\iota_{r,s} \big)_{0<r\leq s}$ be a neighborhood filtration and let $k\geq 1$ be an integer. Then, for all $r < s$, the multiplicity of the interval $(r,s]$ in the barcode of $\h_k(\Delta_\bullet)$ is equal to
\begin{equation*}
  \max\left\{ m\in\Z_{\geq 0} \Bigg| \begin{array}{l}
    \exists\text{ \emph{linearly independent vectors} }\omega_1,\dots,\omega_m\in \h_k(\Delta_s)\text{ \emph{s.t.} }
    I_{(\omega_i,s)}=(r,s] \,   \forall i\\ \text{ \emph{and no nonzero linear combination of  these vectors belongs to} }\mathrm{Im}((i_{r,s})_*)
  \end{array}\right\}.
\end{equation*}

\end{prop}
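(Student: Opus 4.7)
The plan is to deduce the formula directly from the interval decomposition of $\h_k(\Delta_\bullet)$. For a neighborhood filtration defined via open neighborhoods, as in Remark \ref{rem:closed-open-cech}, the module decomposes as $\h_k(\Delta_\bullet)\cong\bigoplus_{\lambda\in\Lambda}\mathbb{F}_\bullet[I_\lambda]$ with every interval of the form $I_\lambda=(a_\lambda,b_\lambda]$, and the multiplicity of $(r,s]$ equals the cardinality of $\Lambda_{r,s}:=\{\lambda:I_\lambda=(r,s]\}$. I will denote by $e_\lambda$ the generator of the $\lambda$-th summand at each time it is non-zero.

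The first step is to compute the birth and death times of an arbitrary class $\omega=\sum_{\lambda:\,s\in I_\lambda}c_\lambda\,e_\lambda\in\h_k(\Delta_s)$. Since the structure map of each summand is the identity whenever both arguments lie in $I_\lambda$ and zero otherwise, a direct check using the definitions in \eqref{eq:lifespan} yields
\[
b_{(\omega,s)}=\max\{a_\lambda:c_\lambda\neq 0\}, \qquad d_{(\omega,s)}=\max\{b_\lambda:c_\lambda\neq 0\}.
\]
Thus the requirement $I_{(\omega,s)}=(r,s]$ forces every contributing $\lambda$ to have $b_\lambda=s$ (as $s\in I_\lambda$ already gives $b_\lambda\geq s$) and $a_\lambda\leq r$, with equality $a_\lambda=r$ for at least one such $\lambda$. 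Writing $\omega=\omega^{<r}+\omega^{=r}$ with $\omega^{=r}$ the projection onto $\mathrm{span}\{e_\lambda:\lambda\in\Lambda_{r,s}\}$, and observing that $\mathrm{Im}\bigl((\iota_{r,s})_\ast\bigr)$ is precisely the span of those $e_\lambda$ with $a_\lambda<r$, I will then see that a combination $\sum_i c_i\omega_i$ lies in the image iff $\sum_i c_i\omega_i^{=r}=0$.

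With this dictionary in place both inequalities follow. Given any $\omega_1,\dots,\omega_m$ satisfying the two conditions of the statement, the ``no nonzero combination in the image'' clause is exactly the assertion that $\omega_1^{=r},\dots,\omega_m^{=r}$ are linearly independent inside $\mathrm{span}\{e_\lambda:\lambda\in\Lambda_{r,s}\}$, which forces $m\leq|\Lambda_{r,s}|$; conversely, choosing $\omega_i=e_{\lambda_i}$ for an enumeration of $\Lambda_{r,s}$ produces $|\Lambda_{r,s}|$ vectors satisfying both conditions, so the maximum equals the multiplicity. The main subtlety I anticipate is bookkeeping with the left-open/right-closed endpoint convention, namely making sure that the formulas for $b_{(\omega,s)}$ and $d_{(\omega,s)}$ use the correct strict versus non-strict inequalities when intersecting intervals; once these conventions line up with Remark \ref{rem:closed-open-cech}, the remainder is pure linear algebra.
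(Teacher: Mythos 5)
Your argument is correct, and it is essentially the same proof as in the cited reference \cite[Proposition 9.2]{lim2020vietoris}: fix an interval decomposition $\h_k(\Delta_\bullet)\cong\bigoplus_\lambda\mathbb{F}_\bullet[I_\lambda]$ with each $I_\lambda$ half-open of the form $(a_\lambda,b_\lambda]$, read off $b_{(\omega,s)}=\max\{a_\lambda:c_\lambda\neq0\}$ and $d_{(\omega,s)}=\max\{b_\lambda:c_\lambda\neq0\}$, and observe that $\mathrm{Im}((\iota_{r,s})_*)$ is the span of those $e_\lambda$ with $r,s\in I_\lambda$, so the ``no nonzero combination in the image'' clause is exactly linear independence of the projections onto $\mathrm{span}\{e_\lambda:I_\lambda=(r,s]\}$. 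The endpoint bookkeeping you flag is handled correctly by the computation you already carried out, so no further work is needed there.
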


This proposition indicates that for each interval $I = (r,s]$ in the barcode of $\Delta_\bullet$ there is a finite linearly independent collection $\omega_1,\ldots, \omega_m \in \h_k(\Delta_s)$ satisfying the conditions above such that $I = I_{(\omega_i,s)}$ for all $i$. One calls any such $\omega_i$ a \emph{representative} of the interval $I$.\footnote{In \cite[Proposition 9.2]{lim2020vietoris} the authors consider the case of the VR filtration of a totally bounded metric space. The same proof applies to the more general statement given above.} See \cite[page 42]{lim2020vietoris} for an example demonstrating the role of the condition that no nonzero linear combination of these vectors 
belongs to $ \mathrm{Im}((\iota_{r,s})_*)$.

\begin{rmk}\label{rmk:comparison}
The above proposition implies that if $I$ is an interval in the barcode of a neighborhood filtration, then $I = I_{\omega_i}$ so that, in particular, its length is equal to that of $I_{\omega_i}$, where $\omega_i$ is as in the statement. Therefore, and as we will do in the rest of the paper, if we have an upper bound for the length of all intervals $I_\omega$ where $\omega\in\mathrm{Spec}_k(\Delta_\bullet)$ then we will automatically have an upper bound for the length of every interval in the barcode of $\Delta_\bullet$.
\end{rmk}

\subsection{Geometry of $L^\infty(\X)$ and Tight Spans}\label{sec:ts}

While the discussion in this paper applies to subsets of any Banach space, special attention is paid to the important case of $L^\infty$ spaces. There are two main reasons for that. The first one is that if we start with a compact metric space that is a priori not a isometrically embedded into a Banach space, there is a nice way of placing it inside $L^\infty(\X)$, the space of bounded functions on $\X$ with the supremum norm.

\begin{defn}[Kuratowski Embedding] \label{def:kuratowski}
For a compact metric space $(\X,\dd_\X)$, the map $\kappa:\X \to L^\infty(\X)$, defined as $x \mapsto \dd_\X(x,\cdot)$, is a distance-preserving embedding, and it is called the \emph{Kuratowski embedding}. 
\end{defn}

The second reason is that $L^\infty$ spaces enjoy the \emph{hyperconvexity} property: If several balls have non-empty pairwise intersection then they share a point in common. 

\begin{defn}[Hyperconvex Space] \label{def:hyper}
A metric space $(\mathcal{E},\dd_\mathcal{E})$ is called \emph{hyperconvex} if for every family $(x_i,r_i)_{i \in I}$ of $x_i$ in $\mathcal{E}$ and $r_i \geq 0$ such that $\dd_\mathcal{E}(x_i,x_j) \leq r_i + r_j$ for every $i,j$ in $I$, there exists a point $x\in \mathcal{E}$ such that $\dd_\mathcal{E}(x_i,x) \leq r_i$ for every $i$ in $I$.
\end{defn}

The hyperconvexity of $L^\infty(\X)$ implies that the \v{C}ech and VR filtrations of a subset of $L^\infty(\X)$ coincide (up to a factor of two in the filtration index). This gives a way to study VR-lifespans of homology of $\X$ by immersing it in $L^\infty(\X)$ and then using the tools applicable for \v{C}ech-lifespans. Note that this idea has been successfully used in the context of VR-persistence, cf.~\Cref{prop:fillrad}. 

Hyperconvexity implies certain universal properties of $L^\infty$ spaces, and these properties will be implicitly used below in relation to Urysohn width (see \Cref{rmk:fillrad-width}), and tight spans (a.k.a. injective hulls). We briefly discuss the latter now and  we refer the reader to Lang's survey~\cite{lang2013injective} for more information.

\begin{defn}[Injective Metric Space]\label{def:Injme}
A metric space $\mathcal{E}$ is called \emph{injective} if for every $1$-Lipschitz map $\phi: \X \to \mathcal{E}$ and distance preserving embedding of $\X$ into $\wt{\X}$, there exists a $1$-Lipschitz map $\wt{\phi}: \wt{\X} \to \mathcal{E}$ extending $\phi$:
$$\begin{tikzcd}
\X \arrow[r, hook] \arrow[dr, "\phi"' , rightarrow]
& \wt{\X} \arrow[d, "\wt{\phi}",dotted]\\
& \mathcal{E}
\end{tikzcd}$$
\end{defn}

It turns out that injectivity coincides with hyperconvexity.

\begin{prop}\label{prop:hypinj}
A metric space is injective if and only if it is hyperconvex.
\end{prop}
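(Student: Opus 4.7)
The plan is to establish the two implications separately. The direction \emph{hyperconvex $\Rightarrow$ injective} is a Zorn's lemma argument that uses the defining ball-intersection property, while the converse is a ``one-point extension'' trick that reduces hyperconvexity to the existence of a single Lipschitz extension.

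For hyperconvex $\Rightarrow$ injective, fix a 1-Lipschitz map $\phi:\X\to E$ and an isometric embedding $\X\hookrightarrow\wt{\X}$. I would consider the collection of pairs $(Z,\psi)$ with $\X\subseteq Z\subseteq \wt{\X}$ and $\psi:Z\to E$ a 1-Lipschitz extension of $\phi$, partially ordered by extension. Every chain admits the union as an upper bound, so Zorn's lemma produces a maximal element $(Z_0,\psi_0)$. It remains to show $Z_0=\wt{\X}$: if some $y\in\wt{\X}\setminus Z_0$ existed, I would seek a value $\psi_0(y)\in E$ lying simultaneously in every closed ball of radius $\dd_{\wt{\X}}(z,y)$ about $\psi_0(z)$, for $z\in Z_0$. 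For any two $z,z'\in Z_0$ the 1-Lipschitz property of $\psi_0$ combined with the triangle inequality gives $\dd_E(\psi_0(z),\psi_0(z'))\leq \dd_{\wt{\X}}(z,z') \leq \dd_{\wt{\X}}(z,y)+\dd_{\wt{\X}}(z',y)$, so these balls are pairwise intersecting. Hyperconvexity (\Cref{def:hyper}) then supplies the common point, yielding a strict extension of $\psi_0$ that contradicts maximality.

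For injective $\Rightarrow$ hyperconvex, given a family $(x_i,r_i)_{i\in I}$ in $E\times\mathbb{R}_{\geq 0}$ with $\dd_E(x_i,x_j)\leq r_i+r_j$, I would construct a one-point enlargement $\wt{E}:=E\sqcup\{y\}$ whose metric restricts to $\dd_E$ on $E$ and is defined on the new point by
\[
\dd(e,y):=\inf_{i\in I}\bigl(\dd_E(e,x_i)+r_i\bigr).
\]
The crucial verification is the triangle inequality $\dd_E(e,e')\leq \dd(e,y)+\dd(y,e')$; it follows by combining $\dd_E(e,e')\leq \dd_E(e,x_i)+\dd_E(x_i,x_j)+\dd_E(x_j,e')$ with the hypothesis $\dd_E(x_i,x_j)\leq r_i+r_j$ and then taking infima in $i$ and $j$ independently. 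The other two triangle inequalities follow immediately from the infimum structure. Passing to the metric quotient if the construction is only a pseudometric (or simply noting that $\dd(e,y)=0$ already exhibits a valid common point $x=e$), one obtains an isometric inclusion $E\hookrightarrow\wt{E}$. Applying injectivity (\Cref{def:Injme}) to the identity $\mathrm{id}_E:E\to E$ along this inclusion produces a 1-Lipschitz extension $\wt{\phi}:\wt{E}\to E$, and the point $x:=\wt{\phi}(y)$ then satisfies $\dd_E(x_i,x)\leq \dd(x_i,y)\leq \dd_E(x_i,x_i)+r_i=r_i$ for every $i$, as required.

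The main technical obstacle I anticipate is the triangle-inequality check in the one-point extension, where the pairwise condition $\dd_E(x_i,x_j)\leq r_i+r_j$ is used in an essential way and care must be taken not to invert the order of the two infima; once that is secured, both implications reduce to standard applications of Zorn's lemma and of the universal property of injective objects.
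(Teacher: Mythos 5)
Your proof is correct. Note that the paper does not supply its own argument for \Cref{prop:hypinj}—it defers to Aronszajn--Panitchpakdi and to Lang's survey—so there is no in-text proof to compare against. What you have written is in fact the standard argument found in those references, and both directions are executed soundly: the Zorn's-lemma step correctly reduces the extension problem to the ball-intersection property (upper bounds of chains exist because any two points in the union of domains already lie in a common $Z_\alpha$), and the one-point extension handles the triangle inequality exactly as needed, taking the two infima over $i$ and $j$ independently. Your remark about the degenerate case $\dd(e,y)=0$ is the right thing to observe: in that case $e$ itself already serves as the sought common point, so one may assume $y$ is at positive distance from $E$ and the extension is a genuine metric.
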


The proof of this proposition can be found in Aronszajn and  Panitchpakdi~\cite{aronszajn1956extension} and in~\cite[Proposition 2.3]{lang2013injective}.

\medskip
It is known that $L^\infty(\X)$ is injective~\cite{lang2013injective}. However, there exists a more efficient injective space containing $\X$.
\begin{defn}[Tight Span] \label{defn:tightspan}
The \emph{tight span} $\ts(\X)$ of a compact metric space $\X$ is the minimal injective metric space admitting an isometric embedding of $\X$. Minimality here means that any other injective metric space admitting an isometric embedding of $\X$ contains an isometric copy of~$\ts(\X)$. 
\end{defn}

Tight spans are sometimes called \emph{injective hulls} or \emph{hyperconvex hulls}. The notions of injectivity and hyperconvexity  were  first proposed by Aronszajn and Panitchpakdi~\cite{aronszajn1956extension}.  Isbell~\cite{isbell1964six} first identified the notion of tight span (although the author used the term \emph{injective envelope}). Additional contributions were made by Dress~\cite{dress1984trees} and Lang~\cite{lang2013injective}; see Chepoi~\cite{chepoi1997atx} for a historical account. 

\begin{prop}[{Properties of the Tight Span \cite{lang2013injective}}] \label{prop:ts}
The tight span $\ts(\X)$ of a compact metric space $\X$ exists and satisfies: 
\begin{enumerate}
\item $\ts(\X)$ is compact.
\item $\ts(\X)$ is contractible.
\item $\ts(\X)$ is isometric to $\X$ for any metric tree $\X$. 
\item $\diam(\ts(\X))=\diam(\X)$. 
\end{enumerate}
\end{prop}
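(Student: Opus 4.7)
The plan is to work with the explicit Isbell--Dress realization of the tight span as the metric space of \emph{extremal functions}
\[
\ts(\X) = \bigl\{f: \X \to \R \,\bigl|\, f(x) = \sup_{y \in \X}(\dd_\X(x,y) - f(y)) \text{ for all } x \in \X\bigr\},
\]
endowed with the sup norm and isometrically containing $\X$ via $x \mapsto \dd_\X(x,\cdot)$. I would first record that this $\ts(\X)$ is hyperconvex, hence injective by \Cref{prop:hypinj}, and that it enjoys the minimality property of \Cref{defn:tightspan}; this is the content of the Isbell--Dress construction and is the technical foundation upon which everything else rests.

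For item (1), every extremal $f$ is automatically $1$-Lipschitz (from the sup-formula for $f$) and satisfies $0 \le f(x) \le \diam(\X)$ (the lower bound from $f(x)+f(x) \ge 0$, the upper bound from bounding $\dd_\X(x,y) - f(y)$ by $\dd_\X(x,y)$). When $\X$ is compact, Arzel\`a--Ascoli yields relative compactness in $L^\infty(\X)$, and closedness of the extremality condition under uniform convergence then gives compactness. For (4), the isometric embedding $\X \hookrightarrow \ts(\X)$ immediately gives $\diam(\ts(\X)) \ge \diam(\X)$; conversely, if $f,g \in \ts(\X)$ then both $f(x),g(x) \in [0,\diam(\X)]$ pointwise, so $\|f-g\|_\infty \le \diam(\X)$. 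For (3), any metric tree $\X$ already enjoys Helly's property for closed balls (pairwise intersecting balls share a common point), so $\X$ is itself hyperconvex, hence injective by \Cref{prop:hypinj}; the minimality clause of \Cref{defn:tightspan} then forces $\ts(\X) \cong \X$.

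The main obstacle is item (2), contractibility. The intended route is to invoke the existence of a \emph{conical geodesic bicombing} on any injective metric space: a continuous assignment, to each pair $p,q \in \ts(\X)$, of a geodesic $\gamma_{p,q} : [0,1] \to \ts(\X)$ from $p$ to $q$ that varies $1$-Lipschitzly in both endpoints. Granting this, fixing a basepoint $p_0 \in \ts(\X)$ and setting $H(q,t) := \gamma_{p_0,q}(1-t)$ produces an explicit deformation retraction of $\ts(\X)$ onto $\{p_0\}$. Building such a bicombing directly from hyperconvexity is the genuine work: one uses hyperconvexity to produce midpoints of any two points (balls of radius $\tfrac{1}{2}\dd(p,q)$ around $p$ and $q$ intersect), iterates on dyadic rationals, and then completes and checks the conical ($1$-Lipschitz in endpoints) property. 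An alternative, lighter route is to invoke that injective metric spaces are absolute $1$-Lipschitz retracts of any ambient metric space, whence they are absolute retracts and, being geodesic, contractible.
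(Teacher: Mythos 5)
The paper gives no proof of this proposition; it is quoted as a package of known facts and referred to Lang~\cite{lang2013injective}. Your proposal therefore cannot be compared to a proof in the paper, but it does reconstruct, correctly, the standard development that Lang's paper carries out: work in the Isbell--Dress realization by extremal functions, establish injectivity and minimality, and read off the four bullets. Items (1) and (4) are handled cleanly and completely (for (1): $0\le f\le\diam(\X)$, $1$-Lipschitz, Arzel\`a--Ascoli, closedness of extremality under uniform limits; for (4): two-sided bound by the embedding and by pointwise range).

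Two small comments on the remaining items. For (3), the step ``$\X$ injective, so minimality forces $\ts(\X)\cong \X$'' is slightly glib: the minimality clause of \Cref{defn:tightspan} hands you an isometric embedding $\ts(\X)\hookrightarrow \X$, and combined with $\X\hookrightarrow\ts(\X)$ you still need to know that two compact metric spaces that embed isometrically into each other are isometric (e.g.\ via the fact that a distance-preserving self-map of a compact metric space is surjective). Alternatively, you can argue more directly in the extremal-function model: hyperconvexity of $\X$ applied to the system $(x,f(x))_{x\in\X}$ produces $x_0$ with $\dd_\X(\cdot,x_0)\le f$, and minimality of $f$ in the pointwise order then forces $f=\dd_\X(\cdot,x_0)$, so $\ts(\X)=\kappa(\X)$. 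For (2), the conical-geodesic-bicombing route works but is considerably heavier than needed; the paper itself records in \Cref{rmk:tightretraction} that there is a $1$-Lipschitz retraction $L^\infty(\X)\to\ts(\X)$, and since $L^\infty(\X)$ is a normed (hence convex, hence contractible) space, $\ts(\X)$ is a retract of a contractible space and thus contractible --- this is exactly your ``alternative, lighter route,'' except that invoking ``geodesic'' there is a red herring: a retract of a contractible space is contractible with no further hypothesis.
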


One particular realization of the tight span $\ts(\X)$ of $\X$ as a subset of $L^\infty(\X)$ is given as follows \cite[Section 3]{lang2013injective}):$$\ts(\X):=\{f\in\Delta(\X):\text{if }g\in\Delta(\X)\text{ and }g\leq f,\text{ then }g=f\,(\text{i.e., }f\text{ is minimal})\} 
$$
where 
$$\Delta(\X):=\{f\in L^\infty(\X):f(x)+f(x')\geq \dd_\X(x,x')\text{ for all }x,x'\in \X\}.$$

Note that from the realization of the tight span recalled above, for any $f\in \ts(\X)$ and any $x\in \X$, it holds that 
\begin{equation}\label{eq:char}f(x) = \max_{x'\in\X}\big(\dd_\X(x,x')-f(x')\big)
=\big\|\dd_\X(x,\cdot)-f\big\|_\infty.\end{equation}

\begin{rmk}\label{rmk:tightretraction}
Not only can the tight span $\ts(\X)$ be regarded as a subset of $L^\infty(\X)$ but also, directly from the fact that it is an injective metric space,  there is a $1$-Lipschitz retraction of $L^\infty(\X)$ to $\ts(\X)$; 
see \cite[Proposition 2.2]{lang2013injective}. 
\end{rmk}

\subsection{Filling Radius} \label{sec:FR_background} The filling radius is a key notion in metric geometry introduced by Gromov as a measure of largeness of a given closed manifold~\cite{gromov1983filling}. To \emph{fill} a manifold $\M$ of dimension $n$, treated as a singular $n$-cycle, means to find an $(n+1)$-dimensional singular chain $\mathcal{D}$ with boundary $\M$, i.e. $\partial \mathcal{D}=\M$. In this case we will also be saying that the cycle $\M$ \emph{bounds}. The ambient space in which filling happens, as well as the coefficients of singular homology, should be specified, as we discuss below.

\begin{defn} [{Gromov's Filling Radius, \cite{gromov1983filling}}] \label{defn:filling_radius} The filling radius $\rho(\M)$ of a closed $n$-dimensional Riemannian manifold $\M$ is the
infimal number $R > 0$ such that $\M$ can be filled inside of the $R$-neighborhood of its Kuratowski image in $L^\infty(\M)$. 
\end{defn}

\begin{rmk}[Coefficients]
This definition makes sense with any homology coefficients. Some common choices include $\Z$-coefficients if $\M$ is oriented, $\Z_2$-coefficients if $\M$ is not oriented, and $\mathbb{Q}$-coefficients in some contexts where torsion is a problem. However, the usual persistence homology is well-defined only over fields,\footnote{See, however, \cite{patel2018generalized}.} so in the rest of the paper we implicitly assume that an arbitrary choice of a field is made (e.g., $\mathbb{Z}_2$), and that all filling radii and all persistence features are considered over this field.  All of our results hold for any field and, for this reason, the field is omitted from the notation.
\end{rmk}

Before Gromov, a different type of filling radius notion was discussed for submanifolds (or more generally, cycles) in $\R^N$ in geometric analysis, especially in the context of the isoperimetric problem~\cite{federer1960normal, michael1973sobolev,bombieri1983gehring}. We summarize both types of filling radii in the following definition, adapting it to the context of persistence.

\begin{defn} [Relative and Absolute Filling Radii of a Homology Class] 
\label{defn:homology_filling_radius} 
Let $(\X,\dd_\X)$ be a metric space, and let $\omega \in \Hom_k(\X)$ be a non-trivial reduced singular homology class. 
\begin{enumerate}
 \item Assume additionally that $\X$ is a subset of a Banach space $\Y$, so that the metric $\dd_{\X}$ agrees with the one inherited from $\Y$ (that is, the embedding $\X \hookrightarrow \Y$ is distance-preserving). The \emph{relative filling radius} of $\omega$ is the infimal number $r$ such that the image of $\omega$ under the induced map $\Hom_k(\X) \to \Hom_k(\N_r(\X \subset \Y))$ is trivial. In other words, it is the infimal size of a neighborhood in $\Y$ where some cycle representing $\omega$ bounds a $(k+1)$-chain. We will use the following notation for the relative filling radius: $\rho(\omega; \X \subset \Y)$.
 \item In case when no ambient space $\Y$ is specified, one can take $\Y = L^\infty(\X)$, and measure the relative filling radius of the Kuratowski image $\kappa(\X) \subset L^\infty(\X)$. This way one obtains the \emph{(absolute) filling radius} of $\omega$: $\rho(\omega; \X) = \rho(\kappa_*(\omega); \kappa(\X) \subset L^\infty(\X))$; cf. \cite[Definition~24]{lim2020vietoris}. 
\end{enumerate}
\end{defn}

The word ``absolute'', which is usually omitted, is justified by the fact that $\rho(\omega; \X)$ equals the infimum of $\rho(\iota_*(\omega); \iota(\X) \subset \Y)$ over all distance-preserving embeddings $\iota: \X \hookrightarrow \Y$~\cite[page 8]{gromov1983filling}; we will refer to this as the \emph{universal property}. Note that when $\X = \M$ is a closed Riemannian manifold, Gromov's filling radius (\Cref{defn:filling_radius}) over $\mathbb{Z}_2$ is the same as the absolute filling radius of the $\mathbb{Z}_2$-fundamental class $[\M]$.

\medskip The relationship between the VR filtration of a compact metric space and the absolute filling radii $\rho(\omega;\X)$ is studied in \cite[Section 9.3]{lim2020vietoris}.  The following proposition applies in the general context of  absolute neighborhood retracts (ANRs), which includes Riemmanian manifolds, metric graphs and other commonly appearing metric spaces.\footnote{Recall that an  ANR is any metric space $\X$ with the property that whenever it is embedded into another metric space $\mathcal{Z}$ through a homeomorphism $h:\X\to\mathcal{Z}$, then there is an open neighborhood $U$ of $h(\X)$ such that $h(\X)$ is a retract of $U$; see Borsuk~\cite{borsuk1932klasse} and Hu~\cite{hu1965theory}. It is known that every compact
(topologically) finite-dimensional locally contractible metric space is an ANR. Thus, all Riemannian manifolds are ANRs.}

\begin{prop}[{\cite[Propositions 9.28 and 9.46]{lim2020vietoris}}] \label{prop:fillrad}
Let $\X$ be a compact ANR metric space. Then, for any integer $k\geq 1$ and any nonzero $\omega\in \Hom_k(\X)$ we have:
\begin{itemize}
\item $\rho(\omega;\X)>0$;
\item the interval $(0,2\rho(\omega;\X)]$ appears in the degree-$k$ barcode of $\VR_\bullet(\X).$
\end{itemize}
Additionally, if $\X$ is a (closed and connected) Riemannian manifold, then there are no other intervals with left-endpoint (birth time) equal to zero in the degree-$k$ barcode of $\VR_\bullet(\X)$.\footnote{In particular, if $\X$ is an $n$-dimensional Riemannian manifold, then $(0,2\rho(\X)]$ is the unique interval with left endpoint zero in the degree-$n$ barcode.}
Here both the filling radius and persistent homology can be computed with coefficients in an arbitrary field when $\M$ is orientable,  and with coefficients in $\Z_2$ when $\M$ is not orientable. 
\end{prop}

\begin{rmk} [Relative Filling Radius vs. Absolute Filling Radius] \label{rmk:ellipsoid1} To give an idea about the difference between relative filling radius in $\Y$ and absolute filling radius, here we give a toy example of a flying saucer in $\Y=\R^3$ (with Euclidean metric). Let $\E$ be the ellipsoid in $\R^3$ given by $\E=\{(x,y,z)\mid \frac{x^2}{100}+\frac{y^2}{100}+\frac{z^2}{1}=1\}$, and let $[\E]$ be its fundamental class in $\h_2(\E)$. There are two different ways to treat $\E$ as a metric space, resulting in the different values of its filling radius. 
\begin{enumerate}
 \item One way is to consider the Riemannian metric $g$ on $\E$ induced by the Euclidean metric of $\R^3$. The Riemannian surface $\E_g$ thus obtained has the \textit{absolute filling radius} of about $10$: $\rho([\E]; \E_g) \sim 10$ (the exact computation is tricky).

 \item The other way is to borrow the extrinsic distance function from $\R^3$. This does not make $\E$ a Riemannian manifold, but rather just a compact metric space embedded in $\R^3$ in a distance-preserving way. The corresponding \textit{relative filling radius} equals $1$: $\rho([\E]; \E \subset \R^3) = 1$.
 
\end{enumerate}

In this example we have $\rho([\E]; \E_g) > \rho([\E]; \E \subset \R^3)$, which might seem to contradict the note above saying that $\rho(\omega; \X) \le \rho(\omega; \X \subset \Y)$ for distance-preserving embeddings $\X \subset \Y$. There is no contradiction here: even though the embedding $\E_g \subset \R^3$ is a Riemannian isometry, it is not distance-preserving. Indeed, the extrinsic distance in $\R^3$ between two points of $\E$ is smaller than the intrinsic distance between them inside $\E$ (the length of the shortest path in $\E$). If one computes the filling radius of $E$ with the extrinsic metric of $\R^3$, it will be at most $1$.

In the following sections, we will see that Gromov's filling radius of a Riemannian manifold isometrically embedded in an ambient space $\Y$ can highly overestimate the lifespan of a topological feature in its \v{C}ech filtration and that the relative filling radius is better adapted to this context. 
 \end{rmk}

\begin{rmk}[Comment about Estimation of Filling Invariants]\label{rmk:compute-fillrad}
The type of connections between persistent homology and metric geometry that we explore in this paper have the potential of permitting the estimation of quantities such as the filling radii $\rho(\omega;\X)$, $\omega\in\Hom_k(\X)$, via the polynomial time algorithms that have been developed for computing persistent homology (see \Cref{sec:PH}). Indeed, such an estimate would be obtained via \Cref{prop:fillrad} and the stability of PH (\Cref{lem:stability1}) through computing the VR-barcodes of a carefully chosen $\epsilon$-net for $\X$, for some $\epsilon >0$.
\end{rmk}

\smallskip
\noindent {\bf Filling Radii \& Lifespans.} 
The notion of \textit{lifespan} of a homology class (\Cref{def:lifespan}) is directly related to the filling radius as follows. Let $\X$ be a compact subset of a Banach space $\Y$. Let $(b_\omega,d_\omega)$ be the homological birth and death times of a degree-$k$ homology class $\omega$ present at time $s$ in the neighborhood filtration $\N_\bullet(\X\subset \Y)$. Directly from the definition of the relative filling radius, we obtain
\[
d_\omega - s = \rho(\omega; \N_s(\X\subset\Y) \subset \Y).
\]
A bit more generally, if $\omega_r \in \Hom_k(\N_{r}(\X\subset \Y))$ is a predecessor of $\omega$ (that is, it is mapped to $\omega$ by the map in homology induced by the inclusion), then
\[
d_\omega - r = \rho(\omega_r; \N_r(\X\subset\Y) \subset \Y).
\]
Letting $r \to b_\omega$, we obtain a formula for the lifespan of $\omega$ in terms of the filling radii of the predecessors of $\omega$:
\begin{align*}\label{eq:lifespan-filling-radius}
d_\omega - b_\omega &= \sup_{b_\omega < r \le s} \rho(\omega_{r}; \N_{r}(\X\subset\Y) \subset \Y) \\
&= \lim_{r \to b_\omega + 0} \rho(\omega_{r}; \N_{r}(\X\subset\Y) \subset \Y). \tag{$\star$}
\end{align*}

Note that both sides of this formula depend on homology coefficients lying in the same field, which can be arbitrary.

\begin{rmk}\label{rmk:cech-pred} As clarified in \Cref{rem:closed-open-cech}, this lifespan cannot be written in term of the filling radius of a homology class of $\overline{\N}_{b_\omega}(\X \subset \Y)$. We cannot expect (a predecessor of) $\omega$ to be present in the homology of $\overline{\N}_{b_\omega}(\X\subset \Y)$, unless we work with \v{C}ech homology, and redefine the filling radius correspondingly.
 \end{rmk}

Formula~\eqref{eq:lifespan-filling-radius} was given for the neighborhood filtration. In view of the functorial nerve theorem (\Cref{thm:cech-neigh}, \Cref{thm:vr-neigh}), the formula specializes to two important cases:
\begin{itemize}
    \item For any $\omega\in\mathrm{Spec}_k(\C_\bullet(\X\subset \Y))$,
\[ 
d_\omega - b_\omega = \lim_{r \to b_\omega + 0} \rho(\omega_{r}; \N_{r}(\X\subset\Y) \subset \Y). 
\]
    \item For any $\omega\in\mathrm{Spec}_k(\VR_\bullet(\X))$,
\[ 
d_\omega - b_\omega = 2\lim_{r \to b_\omega + 0} \rho\big(\omega_{r}; \N_{r}(\kappa(\X)\subset L^\infty(\X)) \subset L^\infty(\X)\big). 
\]
\end{itemize}

\begin{rmk}\label{rmk:vr-filling-redundancy}
For the particular case of VR-lifespans, i.e. when $\Y = L^\infty(\X)$, it is important to understand the behavior or the filling radii 
$\rho(\omega_{r}; \N_{r}(\kappa(\X) \subset L^\infty(\X)) \subset L^\infty(\X))$. We make a comment that this cumbersome notation is a bit redundant; it turns out that
\[
\rho\big(\omega_{r}; \N_{r}(\kappa(\X) \subset L^\infty(\X)) \subset L^\infty(\X)\big) = \rho\big(\omega_{r}; \N_{r}(\kappa(\X) \subset L^\infty(\X))\big).
\]
This is not obvious by default, since the absolute filling radius in the right-hand side should be computed in the space $L^{\infty}\big(\N_{r}(\kappa(\X) \subset L^\infty(\X))\big)$. Nevertheless, this equality holds true, as it is explained in \Cref{sec:VR_Urysohn} (see~\Cref{lem:fillrad_redundancy}).

\end{rmk}

\medskip

\noindent {\bf Auxiliary definitions.} Before concluding the background section, we define two versions of the radius of a set $\X$, which will be used in the remainder of the paper. In the first one, there is no reference to an ambient space, and the center is in the set $\X$. In the second one (circumradius), the radius of $\X$ is computed in an ambient space, and the center may not be in $\X$.

\begin{defn} [Radius] \label{def:radius}
Let $(\X, \dd_\X)$ be a compact metric space.
\begin{enumerate}
 \item The \emph{radius} of $\X$ is
 \[
 \rad(\X) := \inf_{x_0\in \X} \sup_{x\in \X} \dd_\X(x,x_0).
 \]
 \item Assume additionally that $\X$ is a compact subset of a Banach space $(\Y,\|\cdot\|)$.\footnote{So that $\dd_{\X}(x,x') = \|x-y\|$ for all $x,x'\in \X$} The \emph{circumradius} of $\X$ in $\Y$ is
 \[
 \rad(\X \subset \Y) := \inf_{y\in \Y} \sup_{x\in \X} \|x-y\|.
 \]
\end{enumerate}
\end{defn}

\section{Bounding Lifespans via Widths} \label{sec:width}

In this section, we recall and establish upper bounds for the filling radius which can be used to estimate the lifespans in both \v{C}ech and Vietoris--Rips settings. The basic idea behind many constructions is simple and can be illustrated as follows. Let $\X$ be a subset of a Banach space $\Y$, and let $k > \ell$ be positive integers. A degree-$k$ homology class of $\X$ can be ``killed'' by deforming $\X$ to an $\ell$-dimensional complex inside $\Y$ (a ``core''), and if every point moves by some controlled distance, then we obtain an estimate for the filling radius of the degree-$k$ homology. 

\subsection{Background on Widths} \label{sec:width-background}

Here we recall three classical approaches to measuring approximate dimension. Informally, the $k$-width of a space $\X$ measures the extent to which $\X$ fails to be $k$-dimensional.

The Urysohn width was historically the first one to be introduced. The following definition is equivalent to the one given by Urysohn around 1923 in the context of dimension theory; it was posthumously published by Alexandrov~\cite{alexandroff1926notes}.

\begin{defn} [Urysohn Width] \label{def:Urysohn} Let $\X$ be a compact metric space. For an integer $k \geq 0$, the \textit{Urysohn $k$-width} of $\X$ is defined as $$\UW_k(\X)=\inf_f \sup_p \diam(f^{-1}(p)),$$ where $f:\X\to \Delta^k$ is any continuous map to any finite $k$-dimensional simplicial complex.
\end{defn}

By definition,  widths are monotone in the dimension parameter: $$\UW_0(\X)\geq \UW_1(\X) \geq \UW_2(\X) \geq \dots,$$ and $\UW_0(\X) := \diam(\X)$ if $\X$ is connected. The $n$-width of an $n$-dimensional manifold is zero, and all preceding $k$-widths are positive for $0 \leq k < n$. 

The crucial connection between the Urysohn width and the filling radius is due to Gromov.

\begin{thm}[{\cite{gromov1983filling}}] For any closed $n$-dimensional Riemannian manifold $\M$, $$\rho(\M) \le \frac12 \UW_{n-1}(\M).$$ 
\end{thm}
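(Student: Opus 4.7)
The plan is to approximately deform $\kappa(\M) \subset L^\infty(\M)$ onto an $(n-1)$-dimensional subcomplex sitting inside a thin neighborhood, and to use the affine structure of the Banach space $L^\infty(\M)$ together with hyperconvexity to fill the resulting cycle. Fix any $W > \UW_{n-1}(\M)$, and choose a continuous map $f : \M \to K$ to a finite simplicial complex with $\dim K \le n-1$ and all fibers of diameter $< W$. My first step is to upgrade this ``fiber-small'' map to a genuine open cover $\{U_\alpha\}_{\alpha = 1}^N$ of $\M$ with $\diam(U_\alpha) < W$ and multiplicity $\le n$. For each fiber $f^{-1}(p)$, engulf it in an open set $\tilde U_p \subset \M$ of diameter $< W$ (possible by compactness), set $V_p := K \setminus f(\M \setminus \tilde U_p)$ so that $f^{-1}(V_p) \subset \tilde U_p$, extract a finite subcover of $K$, and refine to multiplicity at most $n = \dim K + 1$ using the dimension of $K$. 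The pullbacks $U_\alpha$ then form the desired cover; fix a subordinate partition of unity $\{\phi_\alpha\}$.

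Next, for each $\alpha$, the set $\{\kappa(x) : x \in U_\alpha\} \subset L^\infty(\M)$ has diameter $< W$, so the closed balls of radius $W/2$ around its points pairwise intersect. Hyperconvexity of $L^\infty(\M)$ (\Cref{def:hyper}) then produces a point $p_\alpha \in L^\infty(\M)$ with $\|p_\alpha - \kappa(x)\| \le W/2$ for every $x \in U_\alpha$. Define the ``nerve map''
\[
\Psi : \M \to L^\infty(\M), \qquad \Psi(x) := \sum_\alpha \phi_\alpha(x)\, p_\alpha,
\]
and let $\tilde N \subset L^\infty(\M)$ be the union of the affine simplices $\conv(p_{\alpha_0}, \dots, p_{\alpha_k})$ over the simplices $[\alpha_0, \dots, \alpha_k]$ of the nerve of $\{U_\alpha\}$. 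Multiplicity $\le n$ forces $\dim \tilde N \le n-1$, and $\Psi$ factors through $\tilde N$. The triangle inequality applied to the convex combination yields $\|\Psi(x) - \kappa(x)\| \le W/2$ for every $x \in \M$. Moreover, every point of a simplex of $\tilde N$ is a convex combination of vertices $p_{\alpha_j}$ indexed by an intersecting family; picking any $x \in \bigcap_j U_{\alpha_j}$ shows that the whole simplex lies within distance $W/2$ of $\kappa(x)$, so $\tilde N \subset \overline{\N}_{W/2}(\kappa(\M) \subset L^\infty(\M))$.

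The final step is to fill $\kappa_*[\M]$ inside this closed neighborhood. The affine homotopy $H(x,t) := (1-t)\kappa(x) + t\Psi(x)$ is well defined because $L^\infty(\M)$ is a Banach space, and satisfies $\|H(x,t) - \kappa(x)\| \le tW/2 \le W/2$, so the associated ``prism'' singular $(n+1)$-chain $P$ lies in $\overline{\N}_{W/2}(\kappa(\M))$ with $\partial P = \Psi_*[\M] - \kappa_*[\M]$. Since $\dim \tilde N \le n - 1$, one has $\h_n(\tilde N; \mathbb{Z}_2) = 0$, so $\Psi_*[\M]$ bounds an $(n+1)$-chain $Q$ in $\tilde N \subset \overline{\N}_{W/2}(\kappa(\M))$. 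Then $Q - P$ fills $\kappa_*[\M]$ inside any open $R$-neighborhood $\N_R(\kappa(\M))$ with $R > W/2$, whence $\rho(\M) \le W/2$. Letting $W \searrow \UW_{n-1}(\M)$ gives the claim.

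The main obstacle I expect is the first step: turning a map with small \emph{point}-fibers into an open cover with small \emph{diameter} and controlled multiplicity. Preimages of arbitrarily small neighborhoods of $p \in K$ need not be close to $f^{-1}(p)$ in $\M$, so uniform continuity alone does not suffice; the engineered sets $V_p = K \setminus f(\M \setminus \tilde U_p)$ are the key trick, and the subsequent refinement to multiplicity $\le n$ uses $\dim K \le n-1$. After that, the argument is structural: hyperconvexity of $L^\infty(\M)$ is precisely the ingredient that trades a cover of diameter $W$ for a deformation of displacement $W/2$, producing the factor $\tfrac12$ in the inequality.
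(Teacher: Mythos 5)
Your proof is correct, and it takes a genuinely different route from the paper's. The paper (in the sketch of \Cref{thm:fillrad-urysohn-width}, following \cite{gromov2020no}) builds an abstract mapping cone $\C_f$ by gluing the cylinder $\X\times[0,\delta/2]$ to the target complex along $f$, equips $\C_f$ with a metric whose restriction to $\X\times\{0\}$ is the original one, fills the cycle in $\C_f$ by sliding it to the pinched end, and then transports the filling to $L^\infty(\X)$ via the Kuratowski embedding of $\C_f$ composed with the $1$-Lipschitz restriction map $L^\infty(\C_f)\to L^\infty(\X)$. You instead stay inside $L^\infty(\M)$ throughout: you upgrade the fiber-small map $f$ to a finite open cover of multiplicity $\le n$ whose members have small diameter (the $V_p := K\setminus f(\M\setminus\tilde U_p)$ engineering is exactly the right fix, and the refinement to multiplicity $\le n$ correctly uses $\dim K \le n-1$), then invoke hyperconvexity of $L^\infty(\M)$ to produce centers $p_\alpha$ within distance $W/2$ of each $\kappa(U_\alpha)$, and finally fill via the explicit chain $Q-P$ coming from the affine prism homotopy plus the vanishing of $\h_n$ of the nerve. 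Each approach buys something: the paper's cone construction sidesteps the fiber-to-cover upgrade entirely by working with all fibers at once, at the cost of an auxiliary metric space and a transfer argument; your argument is more self-contained and makes visible that the factor $\tfrac12$ comes precisely from hyperconvexity (a diameter-$W$ set fits in a ball of radius $W/2$), and it also yields the more general \Cref{thm:fillrad-urysohn-width} unchanged, since nothing depends on $\omega$ being a fundamental class. One cosmetic point: rather than asserting $\h_n(\tilde N;\mathbb{Z}_2)=0$ from ``$\dim\tilde N\le n-1$'' (the image $\tilde N$ of possibly degenerate affine simplices needs a word of justification), it is cleaner to factor $\Psi$ as $\iota\circ\psi$ with $\psi:\M\to|\mathcal{N}|$ the partition-of-unity nerve map and $\iota:|\mathcal{N}|\to L^\infty(\M)$ the affine extension, and use $\h_n(|\mathcal{N}|;\mathbb{Z}_2)=0$ since the nerve is a genuine $(n-1)$-dimensional simplicial complex; pushing the filling chain forward by $\iota$ then gives $Q$ in $\tilde N$.
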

The homology coefficients in the definition of $\rho(\M)$ do not matter here; everywhere below we assume them to lie in an arbitrary field, which is fixed and omitted from the notation. The importance of this result for bounding lifespans becomes immediate once one notices that the proof of this inequality can be easily generalized for any homological feature, and not just the fundamental class.

\begin{thm}
\label{thm:fillrad-urysohn-width} 
For any compact metric space $\X$, any integer $k \geq 1$, and any homology class $\omega \in \Hom_{k}(\X)$, 
\[
\rho(\omega; \X) \leq \frac12 \UW_{k-1}(\X).
\]
\end{thm}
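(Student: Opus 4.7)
The plan is to adapt Gromov's original argument (for the fundamental class of a closed Riemannian manifold) to an arbitrary homology class, observing that his construction only uses the $(k-1)$-dimensionality of the target complex and the hyperconvexity of $L^\infty(\X)$.

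First, fix $\epsilon > 0$ and set $W := \UW_{k-1}(\X)$. By definition, there is a continuous map $f: \X \to P$ into a finite $(k-1)$-dimensional simplicial complex with $\diam(f^{-1}(p)) \leq W + \epsilon$ for every $p \in P$. By compactness of $\X$ and uniform continuity of $f$, after a sufficiently fine subdivision of $P$ the pullbacks $U_i := f^{-1}(\mathrm{St}(v_i))$ of open stars of vertices satisfy $\diam(U_i) \leq W + 2\epsilon$. This family $\mathcal{U} = \{U_i\}_{i=1}^N$ is then a finite open cover of $\X$ of multiplicity at most $k$ (since $\dim P \leq k-1$). Its nerve $N$ has dimension at most $k-1$; let $\{\phi_i\}$ be a partition of unity subordinate to $\mathcal{U}$, and let $\widetilde{f}: \X \to N$ defined by $\widetilde{f}(x) = \sum_i \phi_i(x)\, v_i$ be the associated canonical map.

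Next, I build a low-dimensional approximation of the Kuratowski embedding. Because $L^\infty(\X)$ is hyperconvex, any bounded subset $S$ is contained in a closed ball of radius $\diam(S)/2$; so for each $i$ I select $y_i \in L^\infty(\X)$ with $\|y_i - \kappa(x)\|_\infty \leq (W+2\epsilon)/2$ for all $x \in U_i$. Let $\widetilde{g}: N \to L^\infty(\X)$ be the affine extension of $v_i \mapsto y_i$ and set $g := \widetilde{g} \circ \widetilde{f}$. Since $\phi_i(x) > 0$ forces $x \in U_i$, the triangle inequality yields
\[
\|g(x) - \kappa(x)\|_\infty \leq \sum_i \phi_i(x)\, \|y_i - \kappa(x)\|_\infty \leq \tfrac{W+2\epsilon}{2}
\]
for every $x \in \X$.

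Finally, the straight-line homotopy $H_t := (1-t)\kappa + t g$ inside the Banach space $L^\infty(\X)$ remains in the closed $(W+2\epsilon)/2$-neighborhood of $\kappa(\X)$ for every $t \in [0,1]$, so $\kappa$ and $g$ are homotopic as maps $\X \to \N_{(W+2\epsilon)/2}(\kappa(\X) \subset L^\infty(\X))$. On $k$th reduced homology this gives $\kappa_* = \widetilde{g}_* \circ \widetilde{f}_*$; but $\widetilde{f}_*(\omega) \in \Hom_k(N) = 0$ because $\dim N \leq k-1$, so $\kappa_*(\omega)$ vanishes in that neighborhood and hence $\rho(\omega; \X) \leq (W+2\epsilon)/2$. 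Letting $\epsilon \to 0$ yields the stated bound. The main technical point is the passage from the map-to-complex definition of $\UW_{k-1}$ to a cover with both small multiplicity and small diameter simultaneously; once this is set up, the hyperconvex selection of $y_i$ and the factoring of $\kappa$ through a $(k-1)$-dimensional complex are routine.
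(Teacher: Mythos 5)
Your proof is correct, but it takes a genuinely different route from the paper's. The paper adapts Gromov's cone construction: given a map $f:\X\to\Delta^{k-1}$ with small fibers, it forms the space $\C_f$ (the cylinder $\X\times[0,\delta/2]$ with one end pinched along $f$), endows it with a metric, fills the cycle inside $\C_f$, embeds $\C_f$ in $L^\infty(\C_f)$, and finally pushes the filling chain into $L^\infty(\X)$ via the $1$-Lipschitz restriction map. You instead stay entirely inside $L^\infty(\X)$: you subdivide, pull back open stars to get a cover of multiplicity $\leq k$, realize its nerve as a $(k-1)$-dimensional complex in $L^\infty(\X)$ via hyperconvexity (each bounded set sits in a ball of half its diameter), and then kill the cycle by a straight-line homotopy. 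In effect, your argument first establishes the $L^\infty$-interpretation of the Urysohn width, namely $\UW_{k-1}(\X)=2\AW_{k-1}(\kappa(\X)\subset L^\infty(\X))$ (which the paper records separately in \Cref{rmk:fillrad-width} and in \Cref{lem:width-width}), and then applies what is essentially the treewidth argument of \Cref{thm:fillrad-treewidth}. Your approach is cleaner in that it avoids constructing an auxiliary metric on $\C_f$; the cone construction has the virtue of working without hyperconvexity of the ambient space (it only uses the universal $1$-Lipschitz extension property of $L^\infty$).

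One small exposition point: the step ``after a sufficiently fine subdivision of $P$ the pullbacks $U_i=f^{-1}(\mathrm{St}(v_i))$ have diameter $\leq W+2\epsilon$'' is not a consequence of uniform continuity of $f$ alone, since uniform continuity bounds $\dd_P(f(x),f(y))$ in terms of $\dd_\X(x,y)$ and not the other way around. It follows from a compactness argument (if the diameters did not shrink one could extract a limit point $p\in P$ with $\diam f^{-1}(p)\geq W+2\epsilon$, contradicting the choice of $f$), exactly as in the paper's \Cref{lem:width-width}. The conclusion you use is correct; only the attribution needs adjustment.
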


\begin{proof}[Proof sketch, following~\cite{gromov2020no}]
Let $f:\X\to \Delta^{k-1}$ be a map for which $\delta = \sup_p \diam(f^{-1}(p))$ is just a tiny amount bigger than $\UW_{k-1}(\X)$. Consider the cylinder $\X \times [0,\delta/2]$ and glue its end $\X\times\{\delta/2\}$ to a copy of $\Delta^{k-1}$ along the map $f$; that means, pinch every fiber of $f$ inside $\X\times\{\delta/2\}$ to a point. The resulting space $\C_f$ can be endowed with a metric that restricts on $\X\times \{0\}$ to the original metric of $\X$, and makes the length of every interval $\{x\} \times [0,\delta/2]$ equal to $\delta/2$. Now, any $k$-cycle $S$ in $\X\times \{0\}$ representing $\omega$ becomes null-homologous in $\C_f$ (informally, one can just slide it towards the pinched end of the cylinder, where it degenerates to a $(k-1)$-dimensional set). Next, we embed $\C_f$ to $L^\infty(\C_f)$ in a distance-preserving way, and post-compose it with the $1$-Lipschitz restriction $L^\infty(\C_f) \to L^\infty(\X)$, corresponding to the inclusion $\X\times\{0\} \subset \C_f$. The $(k+1)$-chain that we built in $\C_f$ to fill $S$ pushes forward to $L^\infty(\X)$, and there it lies within distance $\delta/2$ of the Kuratowski image of $\X$.
\end{proof}

It is natural to look for a relative version of Theorem~\ref{thm:fillrad-urysohn-width}. We provide such an inequality below (Theorem~\ref{thm:fillrad-alexandrov-width}), additionally replacing its right-hand side by the quantity that is comparable to the Urysohn width but easier to compute; the corresponding metric invariant implicitly appeared in the work of  Alexandrov~\cite{alexandroff1933urysohnschen} on dimension theory.

\begin{defn} [Alexandrov Width] \label{defn:AW} Let $\X$ be a compact subset of a Banach space $(\Y, \|\cdot\|)$. For an integer $k \geq 0$, the \textit{Alexandrov $k$-width} of $\X$ (relative to $\Y$) is defined as 
\[
\AW_k(\X \subset \Y):=\inf_f \sup_{x \in \X} \|x-f(x)\|,
\]
where $f:\X\to \Y$ is any continuous map whose image is a finite simplicial complex of dimension at most $k$.
\end{defn}

The Alexandrov width enjoys similar properties with the Urysohn width: it is monotonically decreasing in $k$ until it reaches zero when $k$ becomes equal the dimension of $\X$. If $\X$ is connected, $\AW_0(\X \subset \Y)$ equals the circumradius of $\X$ in $\Y$, that is, $\AW_0(\X \subset \Y) = \rad(\X \subset \Y)$ (see \Cref{def:radius}).

It is easy to see directly from the definitions that $\UW_k(\X) \leq 2\AW_k(\X\subset\Y)$. Combining this with Theorem~\ref{thm:fillrad-urysohn-width}, we obtain the estimate $\rho(\omega; \X) \leq \AW_{k-1}(\X\subset\Y)$, which we will improve by replacing the absolute filling radius by the relative one.

\begin{thm}
\label{thm:fillrad-alexandrov-width} 
For any compact set $\X$ sitting in a Banach space $\Y$, any integer $k \geq 1$, and any homology class $\omega \in \h_{k}(\X)$, 
\[
\rho(\omega; \X\subset\Y) \leq \AW_{k-1}(\X\subset\Y).
\]
\end{thm}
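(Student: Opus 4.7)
The plan is to imitate the argument sketched in Theorem~\ref{thm:fillrad-urysohn-width}, but to use the extrinsic Alexandrov approximation directly inside $\Y$, so that the cylinder/pinching construction is replaced by a straight-line homotopy in the Banach space. This lets us avoid passing to $L^\infty$ and gives us the \emph{relative} filling radius for free.

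Concretely, I would fix $\e>0$, set $r:=\AW_{k-1}(\X\subset\Y)+\e$, and invoke the definition of the Alexandrov width to obtain a continuous map $f:\X\to\Y$ whose image $K:=f(\X)$ lies in a finite simplicial complex of dimension at most $k-1$, with $\sup_{x\in\X}\|x-f(x)\|<r$. Each $f(x)$ then lies within distance $<r$ of $x\in\X$, so $K\subset\N_r(\X\subset\Y)$ and $f$ may be regarded as a map $\tilde f:\X\to\N_r(\X\subset\Y)$.

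The next step is to produce a homotopy between the inclusion $\iota:\X\hookrightarrow\N_r(\X\subset\Y)$ and $\tilde f$, entirely inside the open neighborhood. Since $\Y$ is a Banach space, the straight-line homotopy $H(x,t):=(1-t)x+tf(x)$ is continuous and satisfies
\[
\|H(x,t)-x\|=t\|f(x)-x\|<r
\]
for every $(x,t)\in\X\times[0,1]$, so $H$ takes values in $\N_r(\X\subset\Y)$. Thus $\iota\simeq\tilde f$ as maps into $\N_r(\X\subset\Y)$. By homotopy invariance of singular homology, $\iota_*=\tilde f_*$ on $\Hom_k(\X)$. But $\tilde f_*$ factors as $\Hom_k(\X)\to\Hom_k(K)\to\Hom_k(\N_r(\X\subset\Y))$, and $\Hom_k(K)=0$ since $\dim K\leq k-1$ and $k\geq 1$. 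Therefore $\iota_*(\omega)=0$, so $\rho(\omega;\X\subset\Y)\leq r=\AW_{k-1}(\X\subset\Y)+\e$, and letting $\e\to 0$ finishes the argument.

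No step looks like a serious obstacle; the only point where one has to be a little careful is the verification that the straight-line homotopy stays inside $\N_r(\X\subset\Y)$, which uses that each intermediate point $(1-t)x+tf(x)$ has distance to the base point $x\in\X$ equal to $t\|f(x)-x\|$, strictly less than $r$. The reason the bound obtained is $\AW_{k-1}$ rather than $\tfrac12\UW_{k-1}$ is precisely that the extrinsic Alexandrov setting supplies a map \emph{into $\Y$} whose displacement is controlled in the ambient norm, so no cylinder doubling is needed.
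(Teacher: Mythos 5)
Your proof is correct and uses the same core idea the paper uses: the straight-line homotopy $H(x,t)=(1-t)x+tf(x)$ in the Banach space, whose displacement at each time $t$ is controlled by $\|f(x)-x\|$, so the whole homotopy stays inside $\N_r(\X\subset\Y)$ and the image of $\omega$ factors through $\Hom_k(K)=0$. The only organizational difference is that the paper proves this as a corollary of the stronger treewidth bound $\rho(\omega;\X\subset\Y)\le\TW_k(\X\subset\Y)$ (where the core $\T$ may be $k$-dimensional with $\Hom_k(\T)=0$ rather than $(k-1)$-dimensional), combined with the trivial inequality $\TW_k\le\AW_{k-1}$; your argument specializes the same homotopy directly to the Alexandrov-width witness.
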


This is the relative counterpart of Theorem~\ref{thm:fillrad-urysohn-width}. It will follow from a stronger estimate below (Theorem~\ref{thm:fillrad-treewidth}), but the intuition behind it is simple as described at the preamble of this section: One can kill higher homology of $\X$ by deforming it to a low-dimensional complex in $\Y$, and if every point moves by some controlled distance, then we obtain an estimate for the filling radius.

The virtues of \Cref{thm:fillrad-alexandrov-width} are twofold. First, the Alexandrov width seems to be easier to estimate than the Urysohn width.
Basically, the Urysohn width considers all maps to a $k$-dimensional space, whereas the Alexandrov width only considers a special class of those maps with the images lying in the same ambient space $\Y$; this is also the reason why $\UW_k(\X) \leq 2\AW_k(\X\subset\Y)$. Second, in \Cref{sec:app-AW-UW} we explain that $\AW_k(\X\subset\Y) \leq \UW_k(\X)$. Hence, together with \Cref{thm:fillrad-alexandrov-width}, this implies one can bound the relative filling radius with the Urysohn width (if $\X$ is compact), too, and we do not lose much when substituting the widths, since
$$\AW_k(\X\subset\Y) \leq \UW_k(\X) \leq 2\AW_k(\X\subset\Y).$$

When $\Y$ is hyperconvex (for example, $L^\infty(\X)$), one can claim more (see \Cref{rmk:aw=uw}):
$$\UW_k(\X) = 2\AW_k(\X\subset\Y).$$

While Urysohn's and Alexandrov's notions measure \emph{non-linear} width, a simpler concept of \emph{linear} width appeared in the work of Kolmogorov~\cite{kolmogoroff1936uber} in the context of approximation theory. It is commonly used in infinite-dimensional settings. 

\begin{defn} [Kolmogorov Width] \label{defn:KW} 
Let $\X$ be a compact subset of a Banach space $\Y$. For an integer $k \geq 0$, the \textit{Kolmogorov $k$-width} of $\X$ (relative to $\Y$) is defined as 
\[
\KW_k(\X \subset \Y) := \inf \{r ~\vert~ \X \subset \N_r(\p \subset \Y) \text{ for some affine $k$-plane } \p \subset \Y\}.
\]
\end{defn}

Again, these quantities monotonically decrease in $k$. It is immediate from definitions that $\AW_k(\X\subset\Y) \le \KW_k(\X\subset\Y)$. From Theorem~\ref{thm:fillrad-alexandrov-width} it follows then the relative filling radius of degree-$k$ homology can be bounded as $\rho(\cdot; \X\subset\Y) \leq \KW_{k-1}(\X\subset\Y)$. The following finer estimate is another corollary of Theorem~\ref{thm:fillrad-treewidth}.

\begin{thm}
\label{thm:fillrad-kolmogorov-width}
Let $\X$ be a compact subset of a Banach space $\Y$. Then for any homology class $\omega \in \Hom_{k}(\X)$, $k\ge 0$, 
\[
\rho(\omega; \X \subset \Y) \leq \KW_k(\X\subset\Y).
\]
\end{thm}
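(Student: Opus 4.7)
The strategy is to exploit the fact that a Kolmogorov $k$-width approximation yields a \emph{contractible} $k$-dimensional approximating object (an affine $k$-plane), which buys one extra dimension compared to the general Alexandrov-width bound. Fix any $r > \KW_k(\X \subset \Y)$ and choose an affine $k$-plane $\p \subset \Y$ with $\X \subset \N_r(\p)$. Using compactness of $\X$, a partition-of-unity argument produces a continuous ``projection'' $p: \X \to \p$ satisfying $\|x - p(x)\| < r$ for every $x \in \X$: cover $\X$ by finitely many balls $\{B_r(y_i)\}$ with centers $y_i \in \p$, take a subordinate partition of unity $\{\phi_i\}$ on $\X$, and set $p(x) := \sum_i \phi_i(x)\, y_i$. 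Since $\sum_i \phi_i \equiv 1$ and $\p$ is closed under affine combinations, $p(x) \in \p$; the distance bound follows from convexity.

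With $p$ in hand, consider the straight-line homotopy $H(x,t) := (1-t)x + t\,p(x)$ from $\X \times [0,1]$ to $\Y$. Because $\|H(x,t) - x\| \le t\|p(x)-x\| < r$, the image of $H$ is contained in $\N_r(\X)$. Given a singular $k$-cycle $z$ representing $\omega$, the prism chain $P := H_*(z \times [0,1])$ is a $(k+1)$-chain in $\N_r(\X)$ whose boundary satisfies $\partial P = p_*(z) + z$ modulo $2$ (using $\partial z = 0$). The crux is now to fill $p_*(z)$ by a $(k+1)$-chain inside $\N_r(\X)$, and this is where the \emph{contractibility and dimensionality} of $\p$ enter simultaneously. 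Note that $p_*(z)$ is supported in the bounded open subset $U := \p \cap \N_r(\X)$ of the $k$-plane $\p \cong \R^k$. Being open and bounded in $\R^k$, $U$ is a non-compact $k$-manifold without boundary; applying Poincar\'e--Lefschetz duality component-by-component (no component of a bounded open subset of $\R^k$ is compact) gives $\h_k(U;\Z_2) = 0$. Consequently, $p_*(z) = \partial c$ for some $(k+1)$-chain $c$ contained in $U \subseteq \N_r(\X)$.

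Combining the two chains, $z = \partial(P + c)$ in $\N_r(\X)$, so $\omega$ is null-homologous in $\h_k(\N_r(\X))$ and hence $\rho(\omega;\X\subset\Y) \le r$. Letting $r$ decrease to $\KW_k(\X\subset\Y)$ completes the proof. The degenerate case $k=0$ reduces to checking separately that $\p \cap \N_r(\X)$ equals $\p$ (a single point), so its reduced $\h_0$ vanishes trivially. Although the statement is announced as a corollary of the treewidth estimate (Theorem~\ref{thm:fillrad-treewidth}), the direct route above sidesteps treewidth by leveraging two complementary geometric facts: the straight-line homotopy stays in $\N_r(\X)$ thanks to the linear structure of $\Y$, while the top-degree homology of the slice $U$ vanishes because $\p$ is affine and of pure dimension $k$. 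The main conceptual obstacle is precisely this last topological step, which is exactly what distinguishes the Kolmogorov-width bound (on degree-$k$ classes) from the weaker Alexandrov-width bound (on degree-$(k-1)$ classes).
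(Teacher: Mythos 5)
Your proof is correct. The conceptual core --- push a representative cycle along a straight-line homotopy into a $k$-dimensional target, fill it there, and observe that both the prism and the filling chain stay inside $\N_r(\X)$ --- is exactly the mechanism behind the paper's Theorem~\ref{thm:fillrad-treewidth}, of which the stated result is a corollary via the bound $\TW_k(\X\subset\Y) \leq \KW_k(\X\subset\Y)$. The technical realization, though, is genuinely different and arguably cleaner for this particular statement. The paper routes through the treewidth, which requires the target to be a \emph{finite simplicial complex} of dimension at most $k$ with trivial $\Hom_k$; turning a Kolmogorov witness (an affine $k$-plane $\p$) into such a complex requires a nerve-type approximation that the paper treats as ``evident'' but does not spell out. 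You sidestep that entirely by landing the deformed cycle in the open slice $U = \p \cap \N_r(\X)$ and showing $\h_k(U;\Z_2)=0$ from the fact that every component of a bounded open subset of $\R^k$ ($k\ge 1$) is a non-compact $k$-manifold --- a different topological input (Poincar\'e--Lefschetz duality rather than the treewidth axiom). Your separate check at $k=0$ via reduced $\h_0$ of a point is also needed and correct, since the non-compactness argument fails there. The trade-off: your argument is more self-contained for Kolmogorov width, while the paper's treewidth detour yields \Cref{thm:fillrad-alexandrov-width} and tree-shaped cores from the same theorem.
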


\begin{rmk} [Filling radius and $k$-widths] \label{rmk:ellipsoid-high} We give a toy example to illustrate the relation between the $k$-width and filling radius. 
 
 For $m\leq N$, consider the $(m-1)$-dimensional ellipsoid 
 $\E\subset \R^N$ (with the extrinsic metric) given by
 $$\E:=\left\{\mathbf{x}\in \R^N ~\middle\vert~ \sum_{i=1}^{m}\dfrac{x_i^2}{a_i^2}=1, \mbox{ and } x_j=0 \mbox{ for } j>m\right\},$$ where $a_1>a_2>\dots>a_{m}>0$. The widths can be roughly estimated as $\UW_k(\E) \sim \AW_k(\E \subset \R^N) \sim a_{k+1}$ for $0\leq k<m$. 
 On the other hand, the relative filling radius of $\E$ in $\R^N$ equals the length of the shortest axis: $\rho([\E];\E\subset \R^N) = a_{m}$. Here, the dimensions $m,N$ and the sequence $\{a_i\}$ are arbitrary. \Cref{thm:fillrad-alexandrov-width} tells us that $\rho([\E];\E\subset \R^N) \le \AW_{m-2}(\E \subset \R^N)$, but we see that the discrepancy between $\rho([\E];\E\subset \R^N)= a_{m}$ and $\AW_{m-2}(\E \subset \R^N) \sim a_{m-1}$ can be arbitrarily large. In other words, the Alexandrov (and Urysohn) width can highly overestimate the filling radius. However, the estimate of \Cref{thm:fillrad-kolmogorov-width} in this example is sharp:
$\rho([\E];\E\subset \R^N)= \KW_{m-1}(\E \subset \R^N) = a_m$ \end{rmk}

For more information on widths, we refer the reader to Balitskiy~\cite{balitskiy2021bounds}.

\subsection{Bounds for \v{C}ech Lifespans via Treewidth} \label{sec:treewidth} 

Recall (from formula~\eqref{eq:lifespan-filling-radius} in \Cref{sec:FR_background}) that $\mathrm{Spec}_k(\C_\bullet(\X\subset\Y))$-lifespans can be written in terms of relative filling radii as follows: 
\[
d_\omega - b_\omega =  \lim_{r \to b_\omega + 0} \rho(\omega_{r}; \N_{r}(\X\subset\Y) \subset \Y),
\]
where $\omega_r$ maps to $\omega$ by the homology map induced by the inclusion (see \Cref{sec:FR_background}). To estimate lifespans, we need upper bounds for filling radii, such as in \Cref{thm:fillrad-alexandrov-width} and \Cref{thm:fillrad-kolmogorov-width}. Both estimates can be simultaneously strengthened using the following new width invariant. Recall that throughout the paper, we use homology with coefficients in a fixed field, which is omitted from the notation.

\begin{defn}[Treewidth] \label{defn:treewidth}
Let $\X$ be a compact subset of a Banach space $(\Y, \|\cdot\|)$. For an integer $k \geq 0$, let us define the \emph{$k\textsuperscript{th}$ treewidth} of $\X$ as 
\[
\TW_k(\X\subset\Y):=\inf_f \sup_{x \in \X} \|x - f(x)\|,
\]
where $f:\X\to \Y$ is any continuous map whose image $f(\X)$ is a finite simplicial complex of dimension at most $k$ with trivial $k\textsuperscript{th}$ reduced homology. 
\end{defn}

The treewidth satisfies evident properties: 
\[
\rad(\X\subset \Y) = \TW_0(\X\subset \Y) \ge \TW_1(\X\subset \Y) \ge \TW_2(\X\subset \Y) \ge \ldots.
\]
An equivalent definition of the $k\textsuperscript{th}$ treewidth (perhaps, better motivating our choice of the word \emph{treewidth}) is obtained if one only considers maps $f$ such that the image $f(\X)$ is a subcomplex of a finite $k$-dimensional \emph{contractible} complex inside $\Y$. Indeed, to any $k$-complex with trivial $\Hom_k(\cdot)$ one can glue several cells of dimension $\leq k$ to kill all its homology in lower degrees as well as its fundamental group. In the opposite direction, any subset of a $k$-dimensional contractible complex has trivial $\Hom_k(\cdot)$. 

The motivation behind the definition of treewidth is to give a common generalization of \Cref{thm:fillrad-alexandrov-width} and \Cref{thm:fillrad-kolmogorov-width}. Both follow from the following result combined with the evident bounds $\TW_{k}(\X\subset\Y) \leq \AW_{k-1}(\X\subset\Y)$ and $\TW_{k}(\X\subset\Y) \leq \KW_{k}(\X\subset\Y)$.

\begin{thm}
\label{thm:fillrad-treewidth}
For any compact set $\X$ sitting in a Banach space $\Y$, any integer $k \geq 0$, and any homology class $\omega \in \Hom_{k}(\X)$, 
\[
\rho(\omega; \X\subset \Y) \leq \TW_k(\X\subset\Y).
\]
\end{thm}

\begin{proof}
Fix any number $\delta > \TW_k(\X\subset\Y)$, and pick a witness map $f : \X \to \Y$, whose image $f(\X) = \T$ has dimension at most $k$ and trivial $\Hom_k(\T)$, such that $\|x-f(x)\| \leq \delta$ for all $x \in \X$. Pick a cycle $S \subset \X$ representing $\omega$ and continuously deform it via the linear homotopy $h: S \times [0,1] \to \Y$ given by
\[
h(x,t) = (1-t)x + tf(x).
\]
Every point $x \in S$ moves by distance at most $\delta$, so the continuous deformation always stays in $\N_{\delta}(S\subset\Y)$. At time $t=1$, the deformed cycle $h(S,1)$ lies in $\T$ and bounds a $(k+1)$ chain within $h(S,1)$, because $\T$ is of dimension $k$ with trivial $\Hom_k(\T)$. Therefore, $S$ is null-homologous in $\N_{\delta}(S\subset\Y)$ (basically, $S$ bounds the $(k+1)$-dimensional trace of the homotopy $h$). 
\end{proof}

A simple example in which \Cref{thm:fillrad-treewidth} gives a stronger bound than \Cref{thm:fillrad-alexandrov-width} and \Cref{thm:fillrad-kolmogorov-width} is depicted in \Cref{fig:tripod}; the fundamental class of this circle has a small filling radius, which can be efficiently estimated by the $1$-treewidth (approximating the shape by a tree, red in the figure), while approximations by straight lines or points give significantly worse bounds.

\begin{figure}
    \centering
    \includegraphics[width=0.3\textwidth]{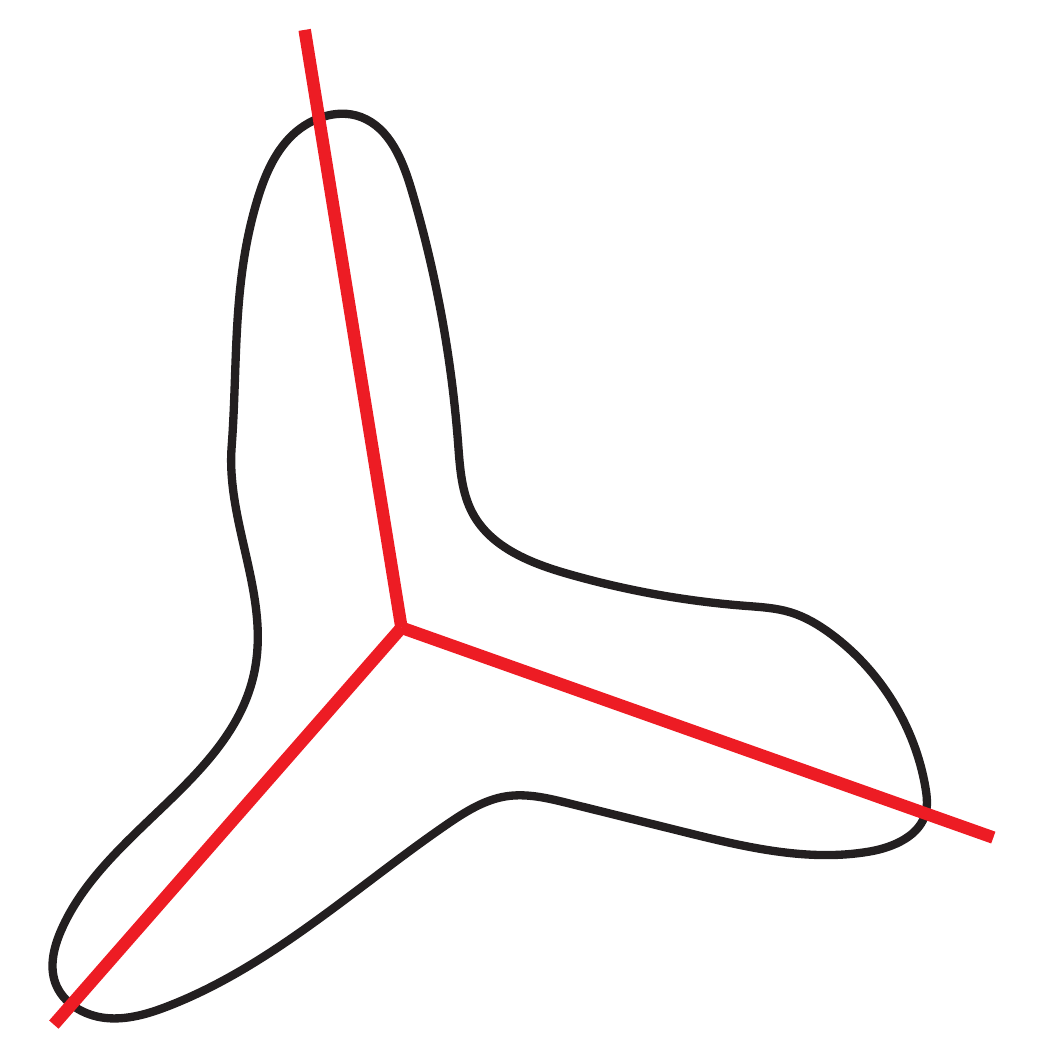}
    \caption{A shape with small $1$-treewidth but considerable Alexandrov $0$-width and Kolmogorov $1$-width.} 
    \label{fig:tripod}
\end{figure}

We proceed with a strengthening applicable to non-compact sets of the form $\N_r(\X \subset \Y)$. Notice that the width in the right-hand side of the following estimate is computed on a compact set, since in Banach spaces the closure of the convex hull of a compact set is compact; see e.g. Lax~\cite[Section~13, Exercise~9]{lax2002functional}. 

\begin{thm}
\label{thm:fillrad-treewidth-conv}
For any compact set $\X$ sitting in a Banach space $\Y$, any integer $k \geq 0$, any positive number $r$, and any homology class $\omega \in \Hom_{k}(\N_r(\X \subset \Y))$, 
\[
\rho(\omega; \N_r(\X \subset \Y) \subset \Y) \leq \TW_k(\cN_r(\X \subset \cconv(\X)) \subset\Y).
\]

\end{thm}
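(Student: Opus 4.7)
The plan is to reduce this estimate to Theorem~\ref{thm:fillrad-treewidth} applied to the compact set $K:=\cN_r(\X\subset\cconv(\X))$; note that $K$ is indeed compact, because the closure of the convex hull of a compact set in any Banach space is compact and $K$ is closed in $\cconv(\X)$. The reduction has two ingredients: first, representing $\omega$ by a cycle supported in $K$, and second, transporting the resulting filling back to the prescribed neighborhood of $\N_r(\X\subset\Y)$.

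For the first ingredient I would show that the inclusion $K\hookrightarrow \N_r(\X\subset\Y)$ induces a surjection on $k$-dimensional reduced homology. The main tool is the persistent nerve theorem (Theorem~\ref{thm:cech-neigh}) applied to the good open covers $\{B_r(x)\cap \Y\}_{x\in\X}$ and $\{B_r(x)\cap \cconv(\X)\}_{x\in\X}$ (whose elements and finite intersections are all convex, hence empty or contractible). This inclusion of neighborhoods coincides, up to homotopy, with the inclusion of \v{C}ech nerves $\C_r(\X\subset\cconv(\X))\hookrightarrow \C_r(\X\subset\Y)$. The two nerves already agree on $1$-skeleta, since the midpoint of any pair of $\X$-points witnessing an edge in $\Y$ lies automatically in $\cconv(\X)$. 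A cycle-level argument (lifting a $k$-cycle in the larger nerve across the inclusion by correcting it with appropriate boundaries) then produces a cycle in the smaller nerve representing the same class, so that $\omega$ admits a representative $\omega'\in \Hom_k(K)$.

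For the second ingredient I would fix $\delta>\TW_k(K\subset\Y)$ and a witness map $f:K\to \T\subset\Y$, where $\T$ is a $k$-dimensional simplicial complex with $\Hom_k(\T)=0$ and $\|y-f(y)\|\le\delta$ for every $y\in K$. Applying the linear-homotopy argument of Theorem~\ref{thm:fillrad-treewidth} verbatim to a cycle $S\subset K$ representing $\omega'$ produces a $(k+1)$-chain filling $S$ whose support lies in $\N_\delta(S\subset\Y)\subseteq \N_\delta(K\subset\Y)$. Since $K\subseteq \cN_r(\X\subset\Y)$, the triangle inequality yields
\[
\N_\delta(K\subset\Y)\;\subseteq\; \N_{r+\delta}(\X\subset\Y)\;=\;\N_\delta(\N_r(\X\subset\Y)\subset\Y),
\]
so this chain fills $\omega$ in the required neighborhood. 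Letting $\delta\searrow \TW_k(K\subset\Y)$ then yields the desired bound.

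The hard part will be the first ingredient: in a general Banach space the two \v{C}ech nerves can genuinely differ in dimension $\ge 2$ (for instance, balls of radius slightly larger than $1/2$ around $e_1,e_2,e_3\in \ell^\infty(\R^3)$ share a common point in $\Y$ but not in $\cconv\{e_1,e_2,e_3\}$), so establishing the $\Hom_k$-surjection requires more than comparing $1$-skeleta. A technically cleaner alternative would be to bypass nerves altogether and construct a continuous barycentric projection $\Phi\colon|S|\to K$ against a sufficiently fine partition of unity subordinate to a finite open subcover $\{B_{r'}(x_j)\}_j$ of the compact support $|S|$ of a representative cycle of $\omega$, and then argue that $\Phi$ is homotopic to the inclusion $|S|\hookrightarrow \N_r(\X\subset\Y)$ through $\N_r(\X\subset\Y)$ itself. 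Open-versus-closed neighborhood conventions would have to be tracked carefully throughout either approach.
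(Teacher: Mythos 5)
Your overall plan — pass to a cycle supported in $K=\cN_r(\X\subset\cconv(\X))$, apply Theorem~\ref{thm:fillrad-treewidth} to $K$, and transport the filling back via $\N_\delta(K\subset\Y)\subseteq\N_{r+\delta}(\X\subset\Y)$ — coincides with the paper's strategy. Moreover, your ``technically cleaner alternative'' for the first ingredient is, up to wording, the paper's actual argument: the paper covers the compact support $|S_0|$ by finitely many balls $B_r(x)$, $x\in\X'\subset\X$, passes to the nerve $Z$ via a partition-of-unity map $\phi$, and composes with the affine vertex-to-center map $\psi$, whose image lies in $\cconv(\X)$ because the vertices do; this is precisely your barycentric projection $\Phi=\psi\circ\phi$. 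So the two routes you offer do not give a genuinely different proof.

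The concern you raise is, however, real — and it applies to the paper's written proof as much as to your own. For $1/2<r<2/3$, the balls $B_r(e_i)\subset\ell^\infty(\R^3)$ share a common point (e.g.\ $(\tfrac12,\tfrac12,\tfrac12)$), so the nerve $Z$ is a full $2$-simplex, yet the affine image $\conv\{e_1,e_2,e_3\}$ has centroid at $\ell^\infty$-distance $2/3>r$ from every $e_i$; thus $\psi(Z)\not\subset U=\bigcup B_r(e_i)$, contrary to the paper's assertion that $\psi:Z\to U$ is a homotopy inverse of $\phi$. The very same computation undercuts your proposed fix: for $u$ covered by balls $B_r(x_j)$ with $\phi_j(u)>0$, the triangle inequality gives only $\|\Phi(u)-x_j\|<2r$, so $\Phi$ need not land in $K$, and the straight-line homotopy from the inclusion to $\Phi$ a priori stays only in $\N_{2r}(\X\subset\Y)$, not in $\N_r(\X\subset\Y)$. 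The crux, which you correctly isolate but do not close, is that the inclusion $\N_r(\X\subset\cconv(\X))\hookrightarrow\N_r(\X\subset\Y)$ must be surjective on $\Hom_k$. In Euclidean (or, more generally, reflexive strictly convex) $\Y$ this follows from the $1$-Lipschitz nearest-point retraction onto $\cconv(\X)$, but for a general Banach $\Y$ neither the nerve-comparison route nor the barycentric-projection route you propose establishes it — and the paper's own writeup glosses over exactly this point.
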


\begin{proof}
Let our homology class $\omega \in \Hom_{k}(\N_r(\X\subset\Y))$ be represented by a singular cycle $S_0$ supported in $\N_r(\X\subset\Y)$. Since $S_0$ is a compact subset of $\Y$, it can be covered by finitely many open balls $B_r(x), x \in \X' \subset \X, |\X'| < \infty$ (recall that $B_r(x)$ denotes the open ball of radius $r$ centered at $x$). Consider the union $U = \bigcup\limits_{x \in \X'} B_r(x)$ and choose any partition of unity subordinate to the cover of $U$ by $B_r(x), x \in \X'$. This partition of unity gives rise to a map from $U$ to the nerve $L$ of that cover. By the nerve theorem, this map $\phi: U \to L$ is a homotopy equivalence, with an evident homotopy inverse $\psi: L \to U$, which sends every vertex of $L$ to the center of the corresponding ball in the cover $U = \bigcup\limits_{x \in \X'} B_r(x)$, and extends affinely on the rest of $L$. Therefore, $\omega\vert_U = \psi_* \circ \phi_* (\omega\vert_U) \in \Hom_{k}(U)$, and the class $\omega$ can be represented by a cycle $S$ supported in $\psi(L) \subset \cconv(\X)$. A similar argument shows that if $S$ bounds in $\N_R(\X \subset \Y)$, for some $R > r$, then the filling chain can also be taken with the support in $\cconv(\X)$. Therefore,
\[
\rho(\omega; \N_r(\X \subset \Y) \subset \Y) \le \rho(\omega; \N_r(\X \subset \cconv(\X)) \subset \Y).
\]
Now we are in position to apply \Cref{thm:fillrad-treewidth} to the compact set $\cN_r(\X \subset \cconv(\X))$:
\begin{align*}
\rho(\omega; \N_r(\X \subset \cconv(\X)) \subset \Y) &= \rho(\omega; \cN_r(\X \subset \cconv(\X)) \subset \Y) \\
&\le \TW_k(\cN_r(\X \subset \cconv(\X)) \subset\Y).
\end{align*}
\end{proof}

\begin{cor}[\v{C}ech Lifespans via Treewidth]\label{cor:cech-TW} Let $\X$ be compact subset of a Banach space $\Y$ and let $\omega\in\mathrm{Spec}_k(\C_\bullet(\X\subset \Y))$, $k\ge 0$. Then, 
\[
d_\omega-b_\omega\leq \TW_k(\cN_{b_\omega}(\X \subset \cconv(\X)) \subset\Y).
\]
In particular, $$d_\omega-b_\omega\leq \TW_k(\cconv(\X) \subset\Y).$$
\end{cor}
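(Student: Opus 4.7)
My plan is to combine the lifespan-filling-radius formula $(\star)$ from \Cref{sec:FR_background} with \Cref{thm:fillrad-treewidth-conv} and pass to the limit $r\to b_\omega^+$, relying on an upper semicontinuity property of treewidth as a function of the neighborhood radius. By the Persistent Nerve Theorem (\Cref{thm:cech-neigh}), identify $\mathrm{Spec}_k(\C_\bullet(\X\subset\Y))$ with $\mathrm{Spec}_k(\N_\bullet(\X\subset\Y))$; for every $r>b_\omega$ pick a predecessor $\omega_r\in\Hom_k(\N_r(\X\subset\Y))$ of $\omega$. Formula $(\star)$ yields $d_\omega-r=\rho(\omega_r;\N_r(\X\subset\Y)\subset\Y)$, which \Cref{thm:fillrad-treewidth-conv} bounds above by $\TW_k(\cN_r(\X\subset\cconv(\X))\subset\Y)$. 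Therefore
\[
d_\omega-b_\omega \;\leq\; (r-b_\omega)\;+\;\TW_k\bigl(\cN_r(\X\subset\cconv(\X))\subset\Y\bigr)\qquad\text{for every }r>b_\omega,
\]
and it remains to prove
\[
\limsup_{r\to b_\omega^+}\TW_k\bigl(\cN_r(\X\subset\cconv(\X))\subset\Y\bigr)\;\leq\;\TW_k\bigl(\cN_{b_\omega}(\X\subset\cconv(\X))\subset\Y\bigr).
\]

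For the upper semicontinuity, fix $\delta>\TW_k(\cN_{b_\omega}(\X\subset\cconv(\X))\subset\Y)$ and choose a witness $f:\cN_{b_\omega}(\X\subset\cconv(\X))\to\T\subset\Y$ with $\T$ a finite $k$-complex whose $k$-th reduced homology vanishes and $\|x-f(x)\|\leq\delta$. Because $\T$ is a finite simplicial complex sitting in a Banach space, it is an ANR, so there exist an open neighborhood $U\supset\T$ in $\Y$ and a continuous retraction $\pi:U\to\T$. Extend $f$ to a continuous map $\tilde f:\cconv(\X)\to\Y$ using Dugundji's extension theorem. Since $\tilde f^{-1}(U)$ is open and contains the compact set $\cN_{b_\omega}$, there is $r^*>b_\omega$ with $\cN_{r^*}(\X\subset\cconv(\X))\subset\tilde f^{-1}(U)$; for $r\in(b_\omega,r^*]$ set $f_r:=\pi\circ\tilde f|_{\cN_r}:\cN_r\to\T$, which is an admissible witness for $\TW_k(\cN_r(\X\subset\cconv(\X))\subset\Y)$. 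A short segment argument — walk along the straight segment from any $x\in\cN_r$ to a nearest point $x'\in\X$ (which exists by compactness of $\X$) and note that $t\mapsto \dist(x(t),\X)$ is \emph{linear} on that segment, so stopping at the parameter where the distance to $\X$ first equals $b_\omega$ gives a point $x_0\in\cN_{b_\omega}$ with $\|x-x_0\|=\|x-x'\|-b_\omega\leq r-b_\omega$. Combining $f(x_0)=\pi(\tilde f(x_0))$ with the uniform continuity of $\pi\circ\tilde f$ on the compact set $\cN_{r^*}$ yields
\[
\|x-f_r(x)\|\;\leq\;\|x-x_0\|+\|x_0-f(x_0)\|+\|\pi(\tilde f(x_0))-\pi(\tilde f(x))\|\;\leq\;(r-b_\omega)+\delta+\eta(r-b_\omega),
\]
where $\eta(t)\to 0$ as $t\to 0^+$; letting $r\to b_\omega^+$ and then $\delta\to\TW_k(\cN_{b_\omega}(\X\subset\cconv(\X))\subset\Y)$ gives the desired upper semicontinuity.

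The ``in particular'' statement is immediate: any treewidth witness for $\cconv(\X)\subset\Y$ restricts to a witness for $\cN_{b_\omega}(\X\subset\cconv(\X))\subset\Y$ with the same target complex and a displacement bound that can only decrease, so $\TW_k(\cN_{b_\omega}(\X\subset\cconv(\X))\subset\Y)\leq\TW_k(\cconv(\X)\subset\Y)$. The main technical obstacle is the upper semicontinuity step: we must enlarge the domain of $f$ from $\cN_{b_\omega}$ to the slightly larger $\cN_r$ while simultaneously preserving the simplicial-image constraint (the target must remain a $k$-complex with vanishing top reduced homology) and keeping the displacement bound near $\delta$. This is precisely what the combined use of Dugundji's extension theorem and the ANR retraction onto $\T$ is designed to handle.
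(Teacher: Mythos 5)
Your proof is correct and follows the same overall skeleton as the paper's: rewrite $d_\omega-b_\omega$ via formula~\eqref{eq:lifespan-filling-radius}, bound each filling radius by $\TW_k(\cN_r(\X\subset\cconv(\X))\subset\Y)$ using \Cref{thm:fillrad-treewidth-conv}, and pass to the limit $r\to b_\omega^+$ via a (semi)continuity property of the treewidth. Where you diverge is in establishing that continuity. The paper invokes \Cref{lem:treewidth-continuity}, whose proof takes a finely subdivided cover of the target complex $\T$, pulls it back through the witness map, inflates the pulled-back sets slightly so that their nerve is unchanged, and maps via a subordinate partition of unity to the nerve sitting inside $\Y$; this argument is self-contained and works for arbitrary decreasing nested compact sets. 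You instead extend the witness map over $\cconv(\X)$ by Dugundji's extension theorem, use that a finite simplicial complex is an ANR to retract a neighborhood $U\supset\T$ onto $\T$, and then combine the observation that $t\mapsto\dist(x(t),\X)$ is linear on the segment to a nearest point of $\X$ (valid since $\cconv(\X)$ is convex and $\X$ is compact) with the uniform continuity of $\pi\circ\tilde f$ on the compact set $\cN_{r^*}$. Both routes are valid; yours is arguably shorter given the black-box theorems you import (Dugundji, ANR), but it exploits the specific neighborhood structure, whereas the paper's lemma is stated and proved in greater generality. You also correctly observe that only upper semicontinuity is needed for the corollary (the reverse inequality is automatic from the monotonicity $\TW_k(A\subset\Y)\le\TW_k(B\subset\Y)$ for $A\subset B$), which the paper does not emphasize since its lemma proves full continuity.
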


\begin{rmk}
Recall the monotonicity of the treewidth: $\TW_k(\cdot) \le \TW_0(\cdot) = \rad(\cdot)$. Therefore, as a trivial consequence of~\Cref{cor:cech-TW}, we can upper-bound lifespans in all dimensions by $\TW_0(\cconv(\X) \subset\Y) = \rad(\X \subset \Y)$.    
\end{rmk}

\begin{proof}[Proof of~\Cref{cor:cech-TW}] 
We use the aforementioned formula
\[
d_\omega - b_\omega =  \lim_{r \to b_\omega + 0} \rho(\omega_{r}; \N_{r}(\X\subset\Y) \subset \Y),
\]
and bound each filling radius by the treewidth of $\N_{r}(\X\subset\Y)$:

\[
\rho(\omega_{r}; \N_{r}(\X\subset\Y) \subset \Y) \le \TW_k(\cN_r(\X \subset \cconv(\X)) \subset\Y).
\]
To conclude, we need a continuity property:
\[
\lim_{r \to b_\omega + 0} \TW_k(\cN_r(\X \subset \cconv(\X)) \subset\Y) = \TW_k(\cN_{b_\omega}(\X \subset \cconv(\X)) \subset\Y).
\]
Its proof is explained in the appendix; see \Cref{lem:treewidth-continuity}.
\end{proof}

Combining this with the trivial inequality $\TW_{k}(\cdot) \leq \AW_{k-1}(\cdot)$, we obtain the following bound.

\coraw

\begin{figure}
 \begin{center}
 \includegraphics[width=0.8\textwidth]{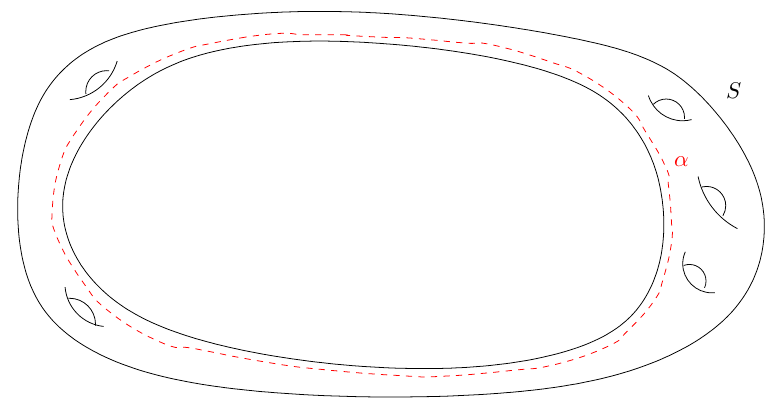}
 \caption{For the surface $S$, while $\AW_1(S \subset \mathbb{R}^3)$ is small, the lifespan of red curve $\alpha\in \Hom_1(S)$ is large. \label{fig:lowerdim}}
 \end{center}
\end{figure} 

We would like to point out that while $k$-widths are effective for bounding lifespans of homology classes of degree $> k$, they do not say much about the lifespans of homology classes in lower degrees. In \Cref{fig:lowerdim}, we give a simple $2$-dimensional example $S$. Here, $\AW_1(S \subset \mathbb{R}^3)$ is small, and hence, the homology class in $\Hom_2(S)$ has short lifespans by \Cref{cor:cech-AW}. However, one can easily see that homology classes $\alpha\in \Hom_1(S)$ can still have very large lifespans. One can generalize this example to any dimension and codimension. In \Cref{sec:extinction}  we take a different perspective, and give global bounds for lifespans and death times in all degrees simultaneously.

\subsection{Bounds for Vietoris--Rips Lifespans via Urysohn Width} \label{sec:VR_Urysohn}

Recall the relationship between $\mathrm{Spec}_k(\VR_\bullet(\X))$-lifespans and filling radii implied by formula~\eqref{eq:lifespan-filling-radius} of \Cref{sec:FR_background}:
\[
d_\omega - b_\omega = 2 \lim_{r \to b_\omega + 0} \rho\big(\omega_{r}; \N_{r}(\kappa(\X)\subset L^\infty(\X)) \subset L^\infty(\X)\big),
\]
where $\omega_r$ maps to $\omega$ by the homology level map induced by the inclusion, and $\kappa: \X \to L^\infty(\X)$ is the Kuratowski embedding. Recall also (\Cref{rmk:vr-filling-redundancy}) that the right-hand side can be rewritten a bit shorter:
\[
\rho\big(\omega_{r}; \N_{r}(\kappa(\X)\subset L^\infty(\X)) \subset L^\infty(\X)\big) = \rho\big(\omega_{r}; \N_{r}(\kappa(\X)\subset L^\infty(\X))\big).
\]

This follows from the following lemma.

\begin{lem}\label{lem:fillrad_redundancy}
Let $\mathcal{Z}$ be a subset of $L^\infty(\X)$ containing the Kuratowski image $\kappa(\X)$. Then, for any homology class $\omega \in \Hom_k(\mathcal{Z})$, the following is true:
\[
\rho(\omega; \mathcal{Z} \subset L^\infty(\X)) = \rho(\omega; \mathcal{Z}).
\]
\end{lem}

\begin{proof}
The inequality ``$\ge$'' follows from the universal property of the absolute filling radius (see the comment after \Cref{defn:homology_filling_radius}). To show the inequality ``$\le$'', we first recall that
\[
\rho(\omega; \mathcal{Z}) = \rho(\kappa'_*(\omega); \kappa'(\mathcal{Z}) \subset L^{\infty}(\mathcal{Z})),
\]
where $\kappa' : \mathcal{Z} \to L^{\infty}(\mathcal{Z})$ is the Kuratowski map. Second,
we make use of the injectivity (\Cref{def:Injme}) of $L^{\infty}(\X)$ in the following way. The inclusion $\mathcal{Z} \subset L^\infty(\X)$, via the Kuratowski identification $\mathcal{Z} \overset{\sim}\to \kappa'(\mathcal{Z})$, gives rise to a distance preserving (hence, $1$-Lipschitz) map $\kappa'(\mathcal{Z}) \to L^\infty(\X)$. This map can be extended to a $1$-Lipschitz map $p : L^{\infty}(\mathcal{Z}) \to L^\infty(\X)$, by invoking the injectivity of $L^{\infty}(\X)$. This map pushes forward the cycle $\kappa'_*(\omega)$ to $\omega$. Moreover, it pushes forward any chain filling $\kappa'_*(\omega)$ inside $\N_r(\kappa'(\mathcal{Z})) \subset L^{\infty}(\mathcal{Z})$, to a chain filling $\omega$ inside $\N_r(\mathcal{Z} \subset L^\infty(\X))$. 
\end{proof}

To estimate VR-lifespans, we need to know how to upper-bound quantities like $\rho\big(\cdot; \N_{r}(\kappa(\X)\subset L^\infty(\X))\big)$, for which Theorem~\ref{thm:fillrad-urysohn-width} is not applicable: the set $\N_{r}(\kappa(\X)\subset L^\infty(\X))$ is not compact (even after taking the closure). One way to deal with non-compactness is to intersect $\N_r(\kappa(\X)\subset L^\infty(\X))$ with the closure of the convex hull of $\X$, like we did in \Cref{thm:fillrad-treewidth-conv}.  That theorem, together with the inequality $\TW_k(\cdot) \le \AW_{k-1}(\cdot)$, will give us a bound on $\rho\big(\cdot; \N_{r}(\kappa(\X)\subset L^\infty(\X))\big)$ in terms of the Alexandrov width of $\cN_r\big(\kappa(\X)\subset \cconv(\kappa(\X))\big)$. It turns out the latter can be interpreted in terms of the Urysohn width.

\begin{rmk}[$L^\infty$-interpretation of Urysohn width]\label{rmk:fillrad-width} In the inequality $\rho(\omega; \X) \leq \frac12 \UW_{k-1}(\X)$, the left-hand side is defined via the Kuratowski embedding in $L^\infty(\X)$, and it is instructive to interpret the right-hand side as well, in a way compatible with the picture of $\kappa(\X)$ sitting in $L^\infty(\X)$. This interpretation goes back to Tikhomirov~\cite{tikhomirov1976some}. Some references in English are~\cite[Appendix~1, Proposition~(D)]{gromov1983filling} by Gromov and~\cite[Theorem~2.1.9 and Definition~2.3.1]{balitskiy2021bounds} by Balitskiy. Here we provide a short summary. The width $\UW_{k-1}(\X)$ can be equivalently defined as the infimal number $\delta>0$ such that there is map $f: \X \to L^\infty(\X)$ whose image is at most $(k-1)$-dimensional complex in $L^\infty(\X)$, and such that $\|\kappa(x) - f(x)\|_\infty \le \delta/2$ for all $x \in \X$. In a nutshell, Theorem~\ref{thm:fillrad-urysohn-width} says that one can try killing higher homology of $\X$ by deforming $\kappa(\X)$ to a low-dimensional complex inside $L^\infty(\X)$, and if every point moves by some controlled distance, then we get an estimate for the filling radius. This interpretation can be shortly reformulated as follows:
\[
\UW_{k-1}(\X) = 2\AW_{k-1}(\kappa(\X) \subset L^\infty(\X)).
\]
\end{rmk}

Given that interpretation, the following estimate is immediate:
\[
\rho\big(\omega; \N_{r}(\kappa(\X)\subset L^\infty(\X))\big) \leq \frac12 \UW_{k-1}\big(\cN_r(\kappa(\X)\subset \cconv(\kappa(\X)))\big).
\]
We will go further and give an even better estimate for $\rho\big(\cdot; \N_{r}(\kappa(\X)\subset L^\infty(\X))\big)$, by replacing the convex hull with the tight span (\Cref{defn:tightspan}). The following result subsumes Theorem~\ref{thm:fillrad-urysohn-width} when $r=0$. The specific realization of $\ts(\X)$ in $L^\infty(\X)$ that was described in \Cref{sec:ts} has an isometric copy of $\X$ inside, and the corresponding embedding of $\X$ in $L^\infty(\X)$ agrees with the Kuratowski embedding. 

\medskip
To simplify notation a bit, in the rest of this section, we omit $\kappa$ and simply write $\X \subset \ts(\X) \subset L^\infty(\X)$. Notice that the set $\overline{\N}_r(\X\subset \ts(\X))$ is compact, as a closed subset of the compact metric space $\ts(\X)$.

\begin{thm} \label{thm:fillrad-urysohn-width-injective} 
For any compact metric space $\X$, any integer $k \geq 1$, any positive number $r$, and any homology class $\omega \in \h_{k}(\N_r(\X\subset L^\infty(\X)))$, 
\[
   \rho\big(\omega; \N_{r}(\X \subset L^\infty(\X))\big) \le \frac12 \UW_{k-1}\big(\overline{\N}_r(\X\subset \ts(\X))\big).
\]
\end{thm}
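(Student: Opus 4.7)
The plan is to adapt the mapping-cylinder argument used in the proof of \Cref{thm:fillrad-urysohn-width}, but route it through the tight span $\ts(\X)$, so as to force the filling chain into a controlled neighborhood of $\X$ inside $L^\infty(\X)$. Set $U := \overline{\N}_r(\X\subset \ts(\X))$, which is compact as a closed subset of the compact space $\ts(\X)$ (\Cref{prop:ts}). The three key ingredients are: the $1$-Lipschitz retraction $\pi: L^\infty(\X) \to \ts(\X)$ from \Cref{rmk:tightretraction}; a Urysohn-width witness map on $U$; and the injectivity of $\ts(\X)$ (\Cref{prop:hypinj}), which will let us push a mapping cylinder back into the tight span.

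First, I would transport a cycle representative of $\omega$ into $U$. Given any cycle $S_0$ representing $\omega$ in $\N_r(\X\subset L^\infty(\X))$, note that $\pi$ is the identity on $\ts(\X) \supseteq \X$, so for $y$ with $\|y-x\|<r$ for some $x \in \X$ we have $\|\pi(y)-x\| = \|\pi(y)-\pi(x)\| < r$. Hence $\pi$ maps $\N_r(\X\subset L^\infty(\X))$ into $\N_r(\X\subset \ts(\X)) \subseteq U$, and the straight-line homotopy $(1-t)\mathrm{id}+t\pi$ stays inside $\N_r(\X\subset L^\infty(\X))$ by convexity of the norm. Therefore $S := \pi_*(S_0)$ is a cycle supported in $U$ that is homologous to $\omega$ inside $\N_r(\X\subset L^\infty(\X))$.

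Next, fix $\delta > \UW_{k-1}(U)$ and a continuous map $f: U \to \Delta^{k-1}$ onto a finite $(k-1)$-dimensional simplicial complex with all fiber diameters below $\delta$. Build the mapping cylinder $C_f := (U \times [0,\delta/2]) \cup_f \Delta^{k-1}$, metrized so that $U \times \{0\}$ is isometric to $U$ and each vertical segment $\{x\}\times[0,\delta/2]$ has length $\delta/2$. Two immediate consequences are that the inclusion $U \hookrightarrow C_f$ is distance-preserving and that every point of $C_f$ lies within distance $\delta/2$ of $U \times \{0\}$. Since $C_f$ deformation-retracts onto $\Delta^{k-1}$, which carries no nontrivial $k$-cycle, $S$ bounds some singular $(k+1)$-chain $D$ in $C_f$.

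The final step invokes the injectivity of $\ts(\X)$ to realize this filling inside $L^\infty(\X)$. The distance-preserving embedding $U \hookrightarrow C_f$ together with the $1$-Lipschitz (in fact isometric) inclusion $U \hookrightarrow \ts(\X)$ admits, by \Cref{prop:hypinj}, a $1$-Lipschitz extension $g: C_f \to \ts(\X)$ that restricts to the identity on $U$. Combined with $d_{C_f}(\,\cdot\,, U) \le \delta/2$, this forces $g(C_f) \subseteq \overline{\N}_{\delta/2}(U \subset \ts(\X)) \subseteq \overline{\N}_{r+\delta/2}(\X\subset L^\infty(\X))$ via the triangle inequality. Consequently $g_*(D)$ is a filling of $S$ inside $\N_{r+\delta/2+\varepsilon}(\X\subset L^\infty(\X))$ for every $\varepsilon>0$, yielding $\rho(\omega;\N_r(\X\subset L^\infty(\X))) \le \delta/2$; taking $\delta \to \UW_{k-1}(U)^+$ completes the proof. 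The main obstacle is precisely controlling where the filling chain lives: because $\N_r(\X\subset L^\infty(\X))$ is non-compact, \Cref{thm:fillrad-urysohn-width} does not apply directly, and in a generic Banach space there is no canonical retract that both records the relevant homology and accepts an extension of the mapping cylinder. The tight span resolves both issues simultaneously, since it is compact yet injective enough to absorb $C_f$.
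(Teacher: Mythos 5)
Your proof is correct, and it follows the same two-phase strategy as the paper's: first transport the cycle into a compact subset of $\ts(\X)$ via the $1$-Lipschitz retraction $\pi: L^\infty(\X) \to \ts(\X)$ (your transport step is exactly the paper's $\omega = \iota_*\pi_*\omega$ argument), and then run the Urysohn-width filling argument on $\overline{\N}_r(\X\subset\ts(\X))$. Where the two proofs genuinely diverge is in the second phase. The paper's proof is modular: it invokes \Cref{thm:fillrad-urysohn-width} as a black box on the compact set $\overline{\N}_r(\X\subset\ts(\X))$, and to make this compatible with the ambient space $L^\infty(\X)$ it chains three filling-radius inequalities, one of which hinges on the restriction map $L^\infty(\overline{\N}_r(\X\subset\ts(\X)))\to L^\infty(\X)$ (the same device appearing in \Cref{rmk:vr-filling-redundancy}). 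You instead inline the mapping-cylinder construction from the proof of \Cref{thm:fillrad-urysohn-width} and replace its Kuratowski-embedding step by the injectivity of $\ts(\X)$ (\Cref{prop:hypinj}), which hands you a $1$-Lipschitz extension $g: C_f \to \ts(\X)$ directly. This is a tidy improvement: it avoids ever passing through the auxiliary space $L^\infty(C_f)$ or $L^\infty(\overline{\N}_r(\X\subset\ts(\X)))$, it uses fewer reductions, and it makes explicit that both compactness and injectivity of $\ts(\X)$ are what the tight span is really buying you. The one place you gloss over, and should say aloud, is the final translation: what you have directly established is that $\rho(\omega; \N_r(\X\subset L^\infty(\X))\subset L^\infty(\X)) \le \delta/2$, and passing from this \emph{relative} filling radius to the \emph{absolute} one $\rho(\omega; \N_r(\X\subset L^\infty(\X)))$ in the theorem's conclusion requires either the universal property of the absolute filling radius or, equivalently, the equality stated in \Cref{rmk:vr-filling-redundancy}. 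This is harmless but worth flagging since the paper itself takes care to record it.
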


\begin{proof}
We have a chain of distance-preserving embeddings $\X \subset \ts(\X) \subset L^\infty(\X)$ as well as $\X \subset \N_r(\X\subset \ts(\X)) \subset \N_r(\X \subset L^\infty(\X))$. All distances below are measured using the $L^\infty$-norm. We need to bound $\rho\big(\omega; \N_r(\X \subset L^\infty(\X)) \subset L^\infty(\X)\big)$. 

Recall the following property of the tight span (\Cref{rmk:tightretraction}): there exists a $1$-Lipschitz retraction $\pi : L^\infty(\X) \to \ts(\X)$. Our first observation is that $\pi$ maps $\N_r(\X \subset L^\infty(\X))$ to $\N_r(\X \subset \ts(\X))$. Indeed, take any point $y \in \N_r(\X \subset L^\infty(\X))$. Since $\|y-x\|<r$ for some $x \in \X$, and $\pi$ is $1$-Lipschitz retraction, it follows that $\|\pi(y) - x\| = \|\pi(y) - \pi(x)\| \le \|y-x\|<r$. Therefore, $\pi\big(\N_r(\X \subset L^\infty(\X))\big) = \N_r(\X \subset \ts(\X))$. Denote the reverse inclusion $$\iota : \N_r(\X \subset \ts(\X)) \hookrightarrow \N_r(\X \subset L^\infty(\X)).$$ Since both $y$ and $\pi(y)$ lie in the open radius $r$ ball centered at $x$, the whole straight line segment between them lies in that ball and in $\N_{r}(\X \subset L^\infty(\X))$. Therefore, a cycle representing $\omega$ can be continuously deformed to a cycle representing $\iota_* \circ \pi_* (\omega)$ by letting each $y$ in the support of $\omega$ slide along the straight line segment towards $\pi(y)$. This homotopy takes place entirely in $\N_{r}(\X \subset L^\infty(\X))$, and therefore, $\omega = \iota_* \circ \pi_* (\omega)$.

It should be obvious now that
\[
\rho\big(\omega; \N_r(\X \subset L^\infty(\X)) \subset L^\infty(\X)\big) \le \rho\big(\pi_*( \omega); \N_r(\X \subset \ts(\X)) \subset L^\infty(\X)\big).
\]
Indeed, any cycle $S$ representing $\pi_* (\omega) \in \Hom_{k}\big(\N_r(\X \subset \ts(\X))\big)$ lies entirely in $\N_r(\X \subset L^\infty(\X))$ and there represents the homology class $\iota_* \circ \pi_* (\omega) = \omega \in \Hom_{k}\big(\N_r(\X\subset L^\infty(\X))\big)$. If $S$ bounds in the neighborhood of $\N_r(\X \subset \ts(\X))$ of certain radius, then it bounds even in a smaller neighborhood of $\N_{r}(\X \subset L^\infty(\X))$.

Using the observation at the beginning of this subsection, we deduce that
 \[
  \rho\big(\pi_* (\omega); \N_r(\X \subset \ts(\X)) \subset L^\infty(\X)\big) = \rho\big(\pi_* (\omega); \N_r(\X \subset \ts(\X))\big).
 \]

The next step follows trivially from the definition of the filling radius:
\[
\rho\big(\pi_*( \omega); \N_r(\X \subset \ts(\X))\big) = \rho\big(\pi_* (\omega); \overline{\N}_r(\X \subset \ts(\X))\big).
\]

The final step is to apply Theorem~\ref{thm:fillrad-urysohn-width} to the compact set $\overline{\N}_r(\X \subset \ts(\X))$, to conclude that
\[
\rho\big(\pi_*( \omega); \overline{\N}_r(\X \subset \ts(\X))\big) \le \frac12 \UW_{k-1}\big(\overline{\N}_r(\X \subset \ts(\X))\big).
\]
Assembling all these inequalities together, we obtain the result. 
\end{proof}

\corvruw

\begin{rmk}
It follows that lifespans in dimensions $1$ and higher can be upper-bounded by
$\UW_0(\ts(\X)) = \diam(\X)$. It is easy to see that lifespans in dimension $0$ are also bounded above by $\diam(\X)$.
\end{rmk}

\begin{proof}[Proof of~\Cref{cor:VR-UW}] Recall that 
\[
d_\omega - b_\omega = 2 \lim_{r \to b_\omega + 0} \rho\big(\omega_{r}; \N_{r}(\X\subset L^\infty(\X)) \subset L^\infty(\X)\big),
\]
where $\omega_r$ maps to $\omega$ by the homology map induced by the inclusion. Upper-bounds for these filling radii are given by Theorem~\ref{thm:fillrad-urysohn-width-injective}. To conclude, it remains to use the continuity property of the Urysohn width~\cite[Theorem~2.4.1]{balitskiy2021bounds}:
\[
\lim_{r \to b_\omega + 0}  \UW_{k-1}\big(\overline{\N}_r(\X\subset \ts(\X))\big) = \UW_{k-1}\big(\overline{\N}_{b_\omega}(\X\subset \ts(\X))\big).
\]
\end{proof}

\subsection{Robust Bounds for \v{C}ech Lifespans via Treewidth} \label{sec:robust}

The estimates provided in the preceding sections have a drawback: there is no a priori method to directly relate the width of $\X \subset \Y$ to the width of its $r$-neighborhood $\N_r(\X \subset \Y)$. This poses a challenge, as the bounds in Corollaries~\ref{cor:cech-TW}, \ref{cor:cech-AW}, and \ref{cor:VR-UW} rely on the width of a specific neighborhood of $\X$. However, by further refining the class of potential ``cores", we can ensure the derivation of more effective bounds. For example, this is possible if the core is an affine $k$-dimensional subspace (see \Cref{sec:PCA}).

Here, we introduce a modified version of treewidth that enables bounding lifespans directly in terms of $\X$ itself, rather than its neighborhood. To this end, we introduce an auxiliary definition to help constrain the geometric complexity of potential ``cores".

\begin{defn}
\label{defn:robust-core}
Let $(\Y,\|\cdot\|)$ be a Banach space, and let $\T \subset \Y$ be a simplicial complex. Suppose there is a retraction map $f: \Y \to \T$. We say that $f$ is \emph{$C$-robust} if for any finite set $\X \subset \Y$, and any $x \in \conv(\X)$,
\[
\|f(x)-x\| \leq \max_{x' \in \X} \|f(x') - x'\| + C \cdot \rad(\X\subset\Y).
\]
\end{defn}

\begin{rmk}
Recall that, given $\T\subset \Y$, a retraction $f: \Y \to \T$ is a continuous map fixing every point of $\T$. Since $\Y$ is contractible, the existence of a retraction implies that $\T$ is contractible.
\end{rmk}

\noindent {\bf Examples.} 
\begin{enumerate}
 \item If $\T$ is a closed affine subspace of $\Y$, and $f$ is any linear projection on $\T$, then $f$ is $0$-robust. 
 \item If $\T$ is nonempty closed convex subset of $\Y$, and $f$ is a continuous nearest-point projection (metric projection) on $\T$, then $f$ is $0$-robust.\footnote{It is known that a nearest-point projection exists, if $\Y$ is reflexive, but it does not have to be continuous in general; see for example the discussion after \cite[Corollary~5.1.19]{megginson2012introduction}.}
 \item If the assignment $x \mapsto \|f(x)-x\|$ is $C$-Lipschitz, then $f$ is $C$-robust.
 \item If $\Y = \mathbb{R}^2$ is the Euclidean plane, and $\T$ is the union of $m\ge 3$ equiangular rays (or intervals) with a common endpoint, then there is a natural retraction to $\T$ which is $\frac{1}{\tan (\pi/m)}$-robust.
\end{enumerate}

\begin{defn}[Robust Treewidth]
\label{defn:robust-treewidth}
Let $\X$ be a subset of a Banach space $(\Y, \|\cdot\|)$. For an integer $k \geq 0$, and a real number $C \ge 0$, we say that the \emph{$C$-robust $k$-dimensional treewidth} of $\X$ is at most $\delta$, if there is a $C$-robust retraction $f: \Y \to \T$ to a finite simplicial complex $\T \subset \Y$ of dimension at most $k$, displacing every point of $\X$ by distance at most $\delta$:
\[
\|f(x)-x\| \leq \delta \quad \forall x \in \X.
\]
The infimum of the numbers $\delta$ satisfying this condition will be denoted by $\TW^C_k(\X\subset\Y)$.
\end{defn}

It is immediate from the definition that $\TW^{C_1}_k(\X \subset \Y) \ge \TW^{C_2}_k(\X \subset \Y)$ for any $C_1 \le C_2$, and that $\TW^C_k(\X \subset \Y) \ge \TW_k(\X \subset \Y)$ for any $C$.

\begin{thm}
\label{thm:fillrad-robust-treewidth}
For any compact set $\X$ sitting in a Banach space $\Y$, any integer $k \geq 0$, any real numbers $C \ge 0$ and $r>0$, and any homology class $\omega \in \Hom_{k}(\N_r(\X\subset \Y))$,
\[
\rho(\omega; \N_r(\X\subset \Y) \subset \Y) \leq \TW^C_k(\X\subset\Y) + Cr.
\]
\end{thm}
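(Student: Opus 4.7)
My plan is to mimic the two-step approach used in the proof of Theorem~\ref{thm:fillrad-treewidth-conv}: first move the cycle representing $\omega$ into a nicely structured carrier (a union of simplices spanned by finite subsets of $\X$), and then kill it via the linear homotopy to the witness retraction $f$, using the $C$-robust property to bound displacements by $\delta+Cr$.

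First I would fix $\delta>\TW^C_k(\X\subset\Y)$ and a corresponding $C$-robust retraction $f:\Y\to\T$, where $\T\subset\Y$ is a finite simplicial complex of dimension at most $k$ that we may assume contractible (so $\Hom_k(\T)=0$) and such that $\|f(x)-x\|\le\delta$ for all $x\in\X$. Represent $\omega$ by a singular cycle $S\subset\N_r(\X\subset\Y)$, and cover $S$ by finitely many open balls $B_r(x_i)$ with $x_i\in\X'\subset\X$ finite. The partition-of-unity / nerve argument from the proof of Theorem~\ref{thm:fillrad-treewidth-conv} then produces a homologous cycle $S'\subset \psi(Z)\subset\bigcup_{\sigma}\conv\{x_i:i\in\sigma\}\subset\cconv(\X)$, the union running over simplices $\sigma$ of the nerve $Z$.

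The crucial observation is that for each nerve simplex $\sigma$ the balls $\{B_r(x_i)\}_{i\in\sigma}$ share a common point, forcing $\rad(\{x_i:i\in\sigma\}\subset\Y)<r$. Therefore, applying the $C$-robust property of $f$ to the finite set $A=\{x_i:i\in\sigma\}\subset\X$ yields
\[
\|f(y)-y\|\ \le\ \max_{i\in\sigma}\|f(x_i)-x_i\|\ +\ C\cdot\rad(A\subset\Y)\ \le\ \delta+Cr
\qquad\text{for every }y\in\conv\{x_i:i\in\sigma\}.
\]
Consequently the linear homotopy $h(y,t)=(1-t)y+tf(y)$ deforms $S'$ to $f(S')\subset\T$ while moving each point by at most $\delta+Cr$; its trace is a $(k+1)$-chain with boundary $S'-f(S')$. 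Since $f(S')$ is a $k$-cycle in the $\le k$-dimensional contractible complex $\T$, it bounds some $(k+1)$-chain in $\T$; chaining this with the trace of $h$ and the short nerve-level homology from $S$ to $S'$ produces a $(k+1)$-chain whose boundary is $S$. Finally, letting $\delta\searrow\TW^C_k(\X\subset\Y)$ gives the stated inequality.

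The main obstacle is verifying that this total filling $(k+1)$-chain sits in the prescribed neighborhood $\N_{\delta+Cr}(\N_r(\X)\subset\Y)=\N_{r+\delta+Cr}(\X\subset\Y)$. The trace of $h$ is easy to localize—every point lies within $\delta+Cr$ of $S'$—but two subtler points demand care. First, the homotopy inverse $\psi:Z\to\Y$ must be arranged so that $\psi(Z)$ itself (hence $S'$) stays within $\N_r(\X)$; this can be achieved by routing $\psi$ through the common intersection points $z_\sigma\in\bigcap_{i\in\sigma}B_r(x_i)$ via a barycentric-subdivision construction, keeping $\psi(Z)\subset\bigcup_\sigma\bigcap_{i\in\sigma}B_r(x_i)$, while preserving compatibility with the convex-combination estimate by interpolating between $x_i$'s and $z_\sigma$'s. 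Second, the $(k+1)$-filling of $f(S')$ inside $\T$ must be chosen near $f(S')\subset\N_\delta(\X)$ rather than wandering to a distant region of $\T$; this can be arranged through a localized contracting homotopy of $\T$ (for instance, by coning $f(S')$ off to a point of $f(S')$ along a $1$-Lipschitz deformation retraction of a neighborhood of $f(S')$ in $\T$). Once these two localizations are established, the composite $(k+1)$-chain lies in $\N_{r+\delta+Cr}(\X\subset\Y)$, yielding the desired bound.
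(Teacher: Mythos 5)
Your plan matches the paper's proof step for step: pick a $C$-robust witness retraction $f:\Y\to\T$ with displacement $\leq\delta$ on $\X$, pass from $S_0$ to a piecewise-affine cycle $S'\subset\psi(Z)$ via the nerve, observe each carrier simplex has vertices in $\X$ with circumradius $<r$ so that $C$-robustness gives a displacement bound of $\delta+Cr$ under the straight-line homotopy to $f$, and then fill $f(S')$ inside the $k$-dimensional complex $\T$. The paper does exactly this (it in fact just writes ``bounds a $(k+1)$-chain within $h(S,1)$, because $\T$ is of dimension $k$ with trivial $\Hom_k(\T)$'' for your second point). So the route is the same; the only genuinely new content in your write-up is the pair of ``subtler points'' at the end, and there I would flag the following.

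Your proposed fix for the first point defeats the purpose of the construction. If you reroute $\psi$ through the intersection points $z_\sigma\in\bigcap_{i\in\sigma}B_r(x_i)$ by barycentric subdivision, then $\psi(Z)$ is carried by simplices with vertices among the $z_\sigma$, and these $z_\sigma$ are \emph{not} points of $\X$ (nor, in general, of $\conv(\X)$). The $C$-robust estimate in Definition~\ref{defn:robust-core} applies only to $x\in\conv(A)$ with $A\subset\X$ finite, so once the carrier leaves $\bigcup_\sigma\conv\{x_i:i\in\sigma\}$ you lose the displacement bound $\delta+Cr$; your parenthetical ``while preserving compatibility with the convex-combination estimate'' is precisely the thing that does not go through. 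The paper's argument instead relies on the \emph{affine} $\psi$ (vertices of $Z$ $\mapsto$ the ball centers $x_i\in\X$) already landing in $U$, so that $S'$ is simultaneously carried by $U\subset\N_r(\X)$ \emph{and} by $\bigcup_\sigma\conv\{x_i:i\in\sigma\}$; that is the property one needs, and in Euclidean (or Hilbert) ambient spaces it is a short variance calculation, whereas for a general Banach $\Y$ it deserves a justification that neither you nor the paper supply.

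Your second point is a real subtlety that the paper compresses, but the ``localized contracting homotopy of $\T$'' you sketch is not the right mechanism---there is no reason a neighborhood of $f(S')$ inside $\T$ should deformation-retract to a point, let alone $1$-Lipschitzly. The clean argument is: subdivide $\T$ to mesh $<\epsilon$; let $\T'$ be the smallest subcomplex of the subdivision containing $f(S')$. Since $\T$ is $k$-dimensional, cellular $(k+1)$-chains vanish, so $\Hom_k(\T')\hookrightarrow\Hom_k(\T)=0$, hence $\Hom_k(\T')=0$ and $f(S')$ bounds inside $\T'\subset\N_{O(\epsilon)}(f(S'))$. Letting $\epsilon\to 0$ and $\delta\searrow\TW^C_k(\X\subset\Y)$ finishes the proof. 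With that replacement your argument is correct and coincides with the paper's.
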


\begin{proof}
Suppose we are given a $C$-robust retraction $f : \Y \to \T$ to a $k$-dimensional simplicial complex $\T\subset \Y$, such that $\|x-f(x)\| \leq \delta$ for all $x \in \X$.

Let our homology class $\omega \in \Hom_{k}(\N_r(\X\subset\Y))$ be represented by a singular cycle $S_0$ supported in $\N_r(\X\subset\Y)$. Since $S_0$ is compact as a subset of $\Y$, it can be covered by finitely many open balls $B_r(x), x \in \X' \subset \X, |\X'| < \infty$ (here $B_r(x)$ denotes the open ball of radius $r$ centered at $x$). Consider the union $U = \bigcup\limits_{x \in \X'} B_r(x)$ and choose any partition of unity subordinate to the cover of $U$ by $B_r(x), x \in \X'$. This partition of unity induces  a map from $U$ to the nerve $L$ of that cover. By the nerve theorem this map $\phi: U \to L$ is a homotopy equivalence, with an evident homotopy inverse $\psi: L \to U$, which sends every vertex of $L$ to the center of the corresponding ball in the cover $U = \bigcup\limits_{x \in \X'} B_r(x)$, and extends affinely on the rest of $L$. Therefore, $\omega\vert_U = \psi_* \circ \phi_* (\omega\vert_U) \in \Hom_{k}(U)$, and the class $\omega$ can be represented by a cycle $S$ supported in $\psi(L)$; in other words, $S$ is obtained by gluing flat $k$-simplices with vertices in $\X$, and with the circumradius of each simplex at most $r$.

Now we continuously deform $S$ using the linear homotopy $h: S \times [0,1] \to \Y$ given by
\[
h(x,t) = (1-t)x + tf(x).
\]
By the robustness property of $f$, every point $x \in S$ is moved by a distance at most $\delta + Cr$, so the continuous deformation always stays in $\N_{\delta+Cr}(S\subset\Y)$. At time $t=1$, the deformed cycle $h(S,1)$ lies in $\T$ and bounds a $(k+1)$ chain within $h(S,1)$, because $\T$ is of dimension $k$ with trivial $\Hom_k(\T)$. Therefore, $S$ is nullhomologous in $\N_{\delta+Cr}(S\subset\Y)$ (basically, $S$ bounds the $(k+1)$-dimensional trace of the homotopy $h$).
\end{proof}

\cortw

\begin{proof} 
We again employ formula~\eqref{eq:lifespan-filling-radius}, now bounding filling radii by \Cref{thm:fillrad-robust-treewidth}:
\begin{align*}
d_\omega - b_\omega &=  \lim_{r \to b_\omega + 0} \rho(\omega_{r}; \N_{r}(\X\subset\Y) \subset \Y) \\
&\le \lim_{r \to b_\omega + 0}  \TW^C_k(\X\subset \Y) + Cr \\
&= \TW^C_k(\X\subset \Y) + C b_\omega.
\end{align*}
For $b_\omega \geq 1$, if we divide the entire inequality by $b_\omega$, we obtain the result.
\end{proof}

\begin{rmk} [Comments about \Cref{cor:cech-CTW}] We make the following remarks.
\begin{itemize}
    \item Notice that on the right-hand side, there is no reference to  the birth time $b_\omega$. Therefore, $C$-robust treewidth yields strong control on \emph{late-born} homology classes (i.e. those satisfying $b_\omega\geq 1$). 
    \item If the the assumption  $b_\omega\geq 1$ is changed to  $b_\omega \geq \tfrac{1}{\alpha}$ for some $\alpha>0$ then the conclusion becomes $\tfrac{d_\omega}{b_\omega}\leq 1+ C + \alpha \TW^C_k(\X\subset \Y)$.
\item The corollary  implies a robust estimate on the \emph{multiplicative persistence} of  intervals in the persistence barcode of the \v{C}ech filtration $\C_\bullet(\X\subset \Y)$. See  Bobrowski, Kahle and Skraba~\cite{bobrowski2017maximally} and Adams and Coskunuzer~\cite{adams2022geometric} for studies of multiplicative persistence.
\item Additionally,  the proof above  gives the following estimates for $(b_\omega,d_\omega)$:
\begin{itemize}
 \item If $b_\omega=0$, then $d_\omega\leq \TW^C_k(\X\subset \Y)$.
 \item If $b_\omega\geq \TW^C_k(\X\subset \Y)$, then $\dfrac{d_\omega}{b_\omega}\leq C+2$.
\end{itemize}
\end{itemize}
\end{rmk}

\subsection{Robust Bounds for \v{C}ech Lifespans via Kolmogorov Width and PCA$_\infty$} \label{sec:PCA}

For a given \emph{finite} set $\X$ in $\R^N$,  standard Principal Component Analysis or PCA (see \cite[\S14.5]{hastie2009elements}) approximates $\X$ by affine $k$-dimensional subspaces through minimizing over all such subspaces $\p$ the sum of squared 
distances of points in $\X$ to  $\p$. The infimum obtained through this procedure is called the $(k+1)\textsuperscript{th}$-\emph{variance} of $\X$.

Since its introduction by Pearson in 1901~\cite{pearson1901liii}, PCA has become one of the most widely used data analysis techniques, owing to its interpretability and the availability of highly efficient computational algorithms. In this work, we investigate a specific $\ell^\infty$-variant of classical PCA, designed for compact subsets of Banach spaces, and examine its connections to persistent homology by analyzing how it constrains the lifespans of homology classes.

\begin{defn} [PCA$_\infty$] \label{defn:pca-infty} Let $\X$ be a compact subset in a Banach space $(\Y,\|\cdot\|)$. For $0\leq k< \dim(\Y)$, let $\A_k$ be the space of affine $k$-dimensional subspaces in $\Y$. Then, we define the \textit{$(k+1)\textsuperscript{th}$ $\ell^\infty$-variance} of $\X$ as
\[
\nu_{k+1}(\X\subset\Y):=\inf_{\p\in \A_{k}}\sup_{x\in \X} \dd_\Y(x,\p)
\]
where $\dd_\Y(x,\p)$ is the distance from $x$ to the $k$-subspace $\p$, i.e., $\dd_\Y(x,\p):=\inf_{p\in\p}\|x-p\|$. 
\end{defn}

\begin{rmk}\label{rmk:kolmogorov}
The term $\ell^\infty$-variance is just a different name for Kolmogorov widths (cf. \Cref{defn:KW}):
\[
\nu_{k+1}(\X\subset \Y) = \KW_k(\X\subset \Y).
\]
\end{rmk}

\begin{thm}
\label{thm:fillrad-pca} 
Let $\X$ be a subset of a Banach space $\Y$. Then, for any $r>0$ and any homology class $\omega \in \Hom_{k}(\N_r(\X\subset \Y))$, $k\ge 0$, 
\[
\rho(\omega; \N_r(\X \subset \Y) \subset \Y) \leq \nu_{k+1}(\X \subset \Y) =  \KW_k(\X\subset \Y).
\]
\end{thm}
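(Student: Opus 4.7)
The plan is to chain together three inequalities. First, applying \Cref{thm:fillrad-treewidth-conv} directly to $\omega$ gives
\[
\rho(\omega; \N_r(\X \subset \Y) \subset \Y) \;\leq\; \TW_k(\cN_r(\X \subset \cconv(\X)) \subset \Y),
\]
so it suffices to upper-bound the right-hand side by $\KW_k(\X\subset \Y)$. I would do this in two stages: first invoke the evident inequality $\TW_k(\cdot\subset\Y) \leq \KW_k(\cdot\subset\Y)$ recorded in \Cref{sec:width-background} to pass from treewidth to Kolmogorov width on the same set, and then establish the monotonicity $\KW_k(\cN_r(\X\subset\cconv(\X))\subset\Y) \leq \KW_k(\X\subset\Y)$.

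The second stage is a short convexity observation. Fix any $\delta > \KW_k(\X\subset\Y)$ and pick an affine $k$-plane $\p\subset\Y$ with $\X\subset \N_\delta(\p\subset\Y)$. The distance function $y \mapsto \dd_\Y(y,\p) = \inf_{p\in\p}\|y-p\|$ is convex on $\Y$ as an infimum of affine functions, so its open sublevel set $\{y : \dd_\Y(y,\p) < \delta\}$ is convex. Since this set contains $\X$, it contains $\cconv(\X)$, and hence also $\cN_r(\X\subset\cconv(\X))$. So the same plane $\p$ witnesses $\KW_k(\cN_r(\X\subset\cconv(\X))\subset\Y) \leq \delta$, and sending $\delta \to \KW_k(\X\subset\Y)$ closes the argument.

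The main thing to watch is the assumed inequality $\TW_k\leq \KW_k$: an affine $k$-plane is not a finite simplicial complex, so converting a Kolmogorov witness into a treewidth witness takes a bit of care. I expect the cleanest route is a partition-of-unity construction: take a finite net in the set, send each net point to a nearest point on $\p$, interpolate via a partition of unity subordinate to a cover by small balls, and post-compose with a retraction of $\p\cong\R^k$ onto a sufficiently large $k$-simplex containing the image. Should one prefer to avoid invoking $\TW_k\leq \KW_k$ as a black box, an alternative is to mimic the proof of \Cref{thm:fillrad-treewidth} directly: represent $\omega$ by a cycle in the nerve of a finite ball cover of $\X$, affinely map the nerve vertices to nearest points in $\p$, and use that $\p\cong\R^k$ has trivial $k$-th singular homology to fill the deformed image together with the trace of the linear homotopy.
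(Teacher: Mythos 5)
Your argument is correct, but it takes a different route from the paper's. The paper proves this in one line via \Cref{thm:fillrad-robust-treewidth} with $C=0$: a nearest-point projection onto an affine $k$-plane is a $0$-robust retraction because $x\mapsto\dd_\Y(x,\p)$ is convex, so the $Cr$ term vanishes and the displacement is controlled by $\KW_k(\X\subset\Y)$. You instead invoke the non-robust \Cref{thm:fillrad-treewidth-conv}, land on $\TW_k$ of the thickened set $\cN_r(\X\subset\cconv(\X))$, and then fold the thickening away by noting that $\{y:\dd_\Y(y,\p)\le\delta\}$ is convex and so absorbs $\cconv(\X)$. Both routes hinge on the same convexity of $\dd_\Y(\cdot,\p)$; you package it as monotonicity of $\KW_k$ under passing to the convex hull, while the paper packages it as $0$-robustness of the projection (\Cref{defn:robust-core}). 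Your version avoids the robust-treewidth machinery and, via the partition-of-unity construction you sketch for $\TW_k\le\KW_k$, also sidesteps the question of whether a genuinely \emph{continuous} nearest-point projection onto a finite-dimensional affine plane exists in an arbitrary Banach space (it may not; the paper acknowledges this in a footnote to its examples of $C$-robust retractions, and its alternative of a continuous linear projection need not realize $\dd_\Y(\cdot,\p)$). What the paper's route buys in return is the cleaner intermediate bound $\rho(\omega;\N_r(\X\subset\Y)\subset\Y)\le\TW^0_k(\X\subset\Y)$, which makes the $r$-independence of the estimate manifest and situates the theorem within the $C$-robust framework of \Cref{sec:robust}.
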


\begin{proof}
Apply Theorem~\ref{thm:fillrad-robust-treewidth} with $\T$ being a $k$-dimensional affine subspace, and $f$ being a $0$-robust nearest-point projection.
\end{proof}

 \v{C}ech Lifespans of homology classes in degree $\geq k$ cannot exceed the $(k+1)\textsuperscript{th}$ variance, $\nu_{k+1}(\X)$.

\corpca
\begin{proof} Combine formula~\eqref{eq:lifespan-filling-radius} with \Cref{thm:fillrad-pca}. 
\end{proof}

\begin{rmk} 
Notice that the corollary above gives strong estimates for point clouds in $\R^N$. For example, if the variance $\nu_{k_0+1}(\X)$ is small, then, in applications where only long-lived homological features are relevant,  the corollary  implies that one can ignore  homology classes  in $\mathrm{Spec}_k(\C_\bullet(\X\subset \Y))$ for $k\geq k_0$. We should note that, by its very definition,  PCA$_\infty$ is more sensitive than the original PCA against outliers. We discuss this difference and pose a statistical question about the relationship between the original PCA and persistent homology in \Cref{sec:remarks}. 
\end{rmk}

\begin{rmk} \label{rmk:general_results} Notice that all lifespan bounds in Corollaries~\ref{cor:cech-TW}, \ref{cor:cech-AW}, \ref{cor:VR-UW}, \ref{cor:cech-CTW}, and \ref{cor:PCA} are stated in terms of homological lifespans ($d_\omega-b_\omega$).  \Cref{rmk:comparison} implies that all these bounds therefore apply to the length of every interval in  either $\PD_k(\X)$ or $\wc{\PD}_k(\X\subset \Y).$
\end{rmk}

\section{Bounding Lifespans by Spread} \label{sec:spread}

This section is concerned with global upper bounds for \v{C}ech lifespans of homology classes in arbitrary degrees dimension. In terms of the conceptual description laid out on page \pageref{pg:cores}, to formulate these results we will choose cores to be \"ubercontractible spaces, as defined below.

\medskip
Our results in this section are somewhat analogous to the following result concerning VR lifespans.

\begin{thm} [{VR-lifespans via Spread; \cite[Proposition 9.19 and Remark 9.18]{lim2020vietoris}}]
\label{thm:spread}
Let $\X$ be a compact metric space and let $\omega\in\mathrm{Spec}_k(\V_\bullet(\X))$. Then  $d_\omega-b_\omega\leq \spr(\X)$.
\end{thm}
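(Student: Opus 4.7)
The proof strategy is to exhibit, for every $r > \spr(\X)$, a ``retraction'' of $\X$ onto a subset $A \subset \X$ of small diameter, and to use it to factor each Vietoris--Rips inclusion $\VR_t(\X) \hookrightarrow \VR_{t+r}(\X)$ through a contractible complex. The bound on lifespans will then follow from a short manipulation of the definitions of $b_\omega$ and $d_\omega$.

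Concretely, I would fix $r > \spr(\X)$ and, by the definition of Katz's spread, choose $A \subset \X$ with $\diam(A) \leq \spr(\X) < r$ and with every point of $\X$ within distance $\spr(\X) < r$ of $A$. Using the axiom of choice, I pick a (not necessarily continuous) map $\pi : \X \to A$ of vertex sets with $\dd_\X(x, \pi(x)) < r$ for all $x \in \X$. This vertex map induces a simplicial map $\pi_\ast : \VR_t(\X) \to \VR_r(A)$ for every $t > 0$: any simplex $[x_0, \dots, x_k] \in \VR_t(\X)$ is sent to $[\pi(x_0), \dots, \pi(x_k)]$, which has diameter at most $\diam(A) < r$ and is therefore a simplex of $\VR_r(A)$. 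Since every finite subset of $A$ has diameter at most $\diam(A) < r$, the complex $\VR_r(A)$ is the full simplex on the vertex set $A$, and hence is contractible.

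The central technical step is to verify that the composite
\[
\VR_t(\X) \xrightarrow{\pi_\ast} \VR_r(A) \hookrightarrow \VR_r(\X) \hookrightarrow \VR_{t+r}(\X)
\]
is simplicially contiguous to the inclusion $\iota_{t, t+r}$. For any simplex $\sigma = [x_0, \dots, x_k] \in \VR_t(\X)$, the pairwise distances within $\sigma \cup \pi(\sigma)$ satisfy $\dd_\X(x_i, x_j) < t$, $\dd_\X(\pi(x_i), \pi(x_j)) \leq \diam(A) < r$, and $\dd_\X(x_i, \pi(x_j)) \leq \dd_\X(x_i, x_j) + \dd_\X(x_j, \pi(x_j)) < t + r$. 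Therefore $\sigma \cup \pi(\sigma)$ spans a simplex in $\VR_{t+r}(\X)$, which is precisely the contiguity condition. Because the intermediate complex $\VR_r(A)$ is contractible, both $\iota_{t,t+r}$ and the composite induce the zero map on reduced homology, for every $t > 0$ and every $r > \spr(\X)$.

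To finish, let $(\omega, s) \in \mathrm{Spec}_k(\VR_\bullet(\X))$ and pick any $t \in (b_\omega, s]$ together with a predecessor $\omega_t \in \Hom_k(\VR_t(\X))$ with $(\iota_{t,s})_\ast \omega_t = \omega$. By the previous paragraph $(\iota_{t, t+r})_\ast \omega_t = 0$; if it were the case that $t + r < s$, functoriality would force $\omega = (\iota_{t+r,s})_\ast (\iota_{t,t+r})_\ast \omega_t = 0$, contradicting $\omega \neq 0$. Hence $t + r \geq s$, and then $(\iota_{s, t+r})_\ast \omega = 0$, which gives $d_\omega \leq t + r$. Letting $t \to b_\omega^+$ and then $r \to \spr(\X)^+$ yields $d_\omega - b_\omega \leq \spr(\X)$. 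The only real hurdle is the contiguity verification, which boils down to the single triangle-inequality estimate above; the rest is formal bookkeeping with persistence.
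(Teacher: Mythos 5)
Your proof is correct and follows the standard strategy: factor the Vietoris--Rips inclusion $\VR_t(\X)\hookrightarrow\VR_{t+r}(\X)$ through the contractible full simplex $\VR_r(A)$ on the finite set $A$, verify contiguity, and conclude the map is zero on reduced homology. This is precisely the argument given in \cite[Proposition 9.3]{lim2020vietoris}, which the paper cites rather than re-proves, and it is the same contiguity/factor-through-contractible technique the paper itself uses in the proof of \Cref{thm:convex-spread}. One small imprecision: since $\spr(\X)$ is an infimum that need not be attained, you cannot in general choose $A$ with $\diam(A)\leq\spr(\X)$ and $\dd_\h^{\X}(\X,A)\leq\spr(\X)$; rather, for $r>\spr(\X)$ there is some $\delta<r$ and a finite $A$ with $\diam(A)\leq\delta$ and $\dd_\h^{\X}(\X,A)\leq\delta$. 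Your argument only uses the strict bounds $\diam(A)<r$ and $\dd_\X(x,\pi(x))<r$, so this slip is harmless and the rest of the proof goes through as written.
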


Here Katz's notion of \emph{spread} is used \cite{katz1983filling}; see also \cite[Definition 4]{wilhelm1992filling}. By definition, $\spr(\X)$ is the infimum of the real numbers $\delta \geq 0$ for which there is a finite subset $A = \{a_1, \ldots, a_m\} \subset \X$ with $\diam(A)\leq \delta$ and $\dd^\X_\h(\X, A) \leq \delta$. 

In the rest of this section, we work in a Banach space $\Y$ and, for each compact set $\X \subset \Y$, we consider a certain variant of the notion spread for which we prove an analogue of \Cref{thm:spread}. 

\begin{defn}[\"Ubercontractibility]\label{def:ubercontractible}
A set $\T$, sitting in a Banach space $\Y$, will be said to be \emph{$\delta$-\"ubercontractible} if its neighborhood $\N_r(\T \subset \Y)$ is contractible for every $r \ge \delta$. 
\end{defn}

\begin{rmk} [Acyclicity]
For our purposes  a slightly weaker condition would suffice: all neighborhoods $\N_r(\T\subset \Y)$, $r\ge \delta$, are acyclic (have trivial homology groups) rather than contractible (have trivial homotopy groups). The acyclicity condition is equivalent to having almost trivial \v{C}ech persistence diagram (trivial beyond death time $\delta$), and is implied by \"ubercontractibility as defined above.
\end{rmk}

\begin{defn}[\"Uberspread]\label{def:uberspread}
For a compact subset $\X$ of a Banach space $\Y$, we define its 
\emph{\"uberspread} as 
 $$\uspr(\X\subset \Y):=\inf\left\{\delta \ge 0~\middle\vert~ 
 \begin{array}{l}
 \exists \T\subset \Y \text{ a $\delta$-\"ubercontractible simplicial complex}\\
 \text{such that 
 }\dd_\h^\Y(\X,\T)\leq\delta
 \end{array}\right\}.$$
\end{defn}

Note that for any point $p\in \Y$ the singleton set $\{p\}$ is $\delta$-\"ubercontractible  for all $\delta\geq 0$. In particular, this implies that $$\uspr(\X\subset \Y)\leq \rad(\X\subset \Y).$$ For additional  constructive examples of \"ubercontractible sets, we refer the reader to \Cref{sec:app-ubercontractible}, where we show that the cut-locus of a convex set is typically $0$-\"ubercontractible.

\thmuberspread

\begin{figure}
 \begin{center}
 \includegraphics[width=0.35\linewidth]{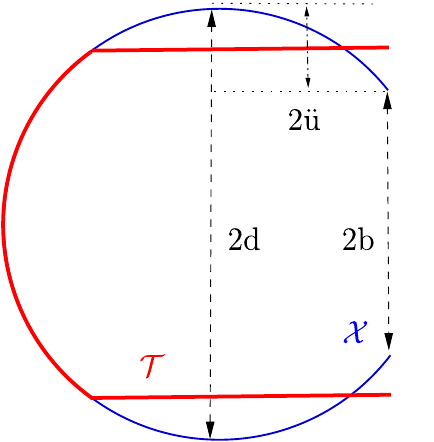}
 \caption{The bound $d-b\leq 2\,\ddot{u}$ in~\Cref{thm:uberspread} is sharp; here $\Y=\mathbb{R}^2$ and $\ddot{u} = \uspr(\X\subset \mathbb{R}^2)$.  \label{fig:uber}}
 \end{center}
\end{figure}

\begin{proof}  Via the functorial nerve theorem (\Cref{thm:cech-neigh}), we will argue at the level of neighborhood filtrations, as opposed to simplicial filtrations.
Let $\T$ be a $\delta$-\"ubercontractible simplicial complex in $\Y$ which is Hausdorff distance at most $\delta$ away from $\X$, where $\delta$ is just a tiny bit greater than $\uspr(\X\subset \Y)$. 
Then, for all $r>0$ we have
$$\begin{tikzcd}
\N_{r}(\X\subset \Y) \arrow[rd,"p_{r}"'] \arrow[rr, "i_{r,r+2\delta}"] & & \N_{r+2\delta}(\X\subset \Y) \\
& \N_{r+\delta}(\T\subset \Y) \arrow[ru,"q_{r+\delta}"']&
\end{tikzcd}$$
where $i_{r,r+2\delta}$, $p_r$, and $q_{r+\delta}$ are the obvious inclusion maps. Applying the homology functor to the above diagram gives, for all $r>0$, that $$\Hom_k(i_{r,r+2\delta}) = \Hom_k(p_{r}) \circ \Hom_k(q_{r+\delta}) = 0$$ since $\N_{r+\delta}(\T\subset \Y)$ is contractible. The this means that $d_\omega-b_\omega\leq 2\delta$. The proof follows.
\end{proof}

\begin{rmk} [Katz spread vs. $\uspr$]
Let us compare \Cref{thm:uberspread} with~\cite[Proposition~9.19]{lim2020vietoris}. Suppose we are given a compact metric space $\X$ and a finite subset $A = \{a_1, \ldots, a_m\} \subset \X$ with $\diam(A) \leq \delta$ and $\dd^\X_\h(\X, A) \leq \delta$. In other words, the \emph{Katz spread} of $\X$ does not exceed $\delta$:
$$\spr(\X) \le \delta.$$
Embed $\X$ in $L^\infty(\X)$ via the Kuratowski map $\kappa$, and note that the set $\kappa(A) \subset L^\infty(\X)$ is $\delta/2$-\"ubercontractible. Hence, we can apply \Cref{thm:uberspread} and conclude that \v{C}ech lifespans of $\kappa(\X) \subset L^\infty(\X)$ do not exceed $2\delta$. Therefore, VR lifespans of $\X$ do not exceed $4\delta$. This is weaker than the conclusion of \cite[Proposition~9.19]{lim2020vietoris}, which tells us that in this situation VR lifespans of $\X$ do not exceed $\spr(\X) \le \delta$.
\end{rmk}

In general, the estimate in \Cref{thm:uberspread} cannot be improved (see \Cref{fig:uber}). However, the factor of $2$ in that estimate can sometimes be removed if the comparison set $\T$ (i.e. the ``core") admits a $1$-Lipschitz nearest-point projection from $\Y$. 

\begin{thm}\label{thm:convex-spread}
Let $\X$ be a compact subset of a Banach space $\Y$. Assume that $\conv(\X)$ admits a $1$-Lipschitz nearest-point projection to a closed convex set $\T \subset \Y$.\footnote{For example, this is true whenever $\Y$ is Euclidean.} Then any $\omega \in \mathrm{Spec}_k(\C_\bullet(\X\subset \Y))$, $k\ge 0$, has lifespan at most $\dd_\h^\Y(\X, \T)$. 
\end{thm}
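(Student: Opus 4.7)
My plan is to combine the relative filling radius formula $(\star)$ with a sharper version of the homotopy argument used to prove \Cref{thm:uberspread}, replacing the ``$2\delta$'' slack there by ``$\delta$'' through direct use of the $1$-Lipschitz nearest-point projection $\pi:\conv(\X)\to\T$ (where $\delta:=\dd_\h^\Y(\X,\T)$).

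By the functorial nerve theorem (\Cref{thm:cech-neigh}) and formula~\eqref{eq:lifespan-filling-radius}, it suffices to show that $\rho(\omega_r;\,\N_r(\X\subset\Y)\subset\Y)\le\delta$ for every $r>b_\omega$ and every predecessor $\omega_r\in\Hom_k(\N_r(\X\subset\Y))$ of $\omega$. I fix such an $r$ and a singular cycle $S$ representing $\omega_r$, and first use the nerve-theorem argument from the proof of \Cref{thm:fillrad-treewidth-conv} to replace $S$ by a homologous cycle supported in $\conv(\X)\cap\N_r(\X\subset\Y)$, so that $\pi$ is defined on the support of $S$.

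Next, I push $S$ into $\T$ via the straight-line homotopy $H(y,t)=(1-t)y+t\pi(y)$. The image $\pi_\ast S$ lies in $\T$, which is convex and hence contractible, so it bounds some $(k+1)$-chain $\Sigma\subset\T$; gluing $\Sigma$ to the trace of $H$ produces a $(k+1)$-chain in $\Y$ whose boundary is $S$. The key claim is that this filling lies entirely in $\N_{r+\delta}(\X)$. To verify it, for each $y$ in the support of $S$ I pick $a\in\X$ with $\|y-a\|<r$ and set $D:=\|\pi(y)-y\|=\dist(y,\T)$; using $\dist(a,\T)\le\delta$ gives $D<r+\delta$. Two triangle inequalities, one through $a$ and one through $\pi(y)\in\T\subseteq\overline{\N}_\delta(\X)$, yield
\[
\dist(H(y,t),\X)\le \min\bigl(r+tD,\ \delta+(1-t)D\bigr),
\]
and the maximum of this min over $t\in[0,1]$ is attained where the two linear functions cross, giving $(r+D+\delta)/2<r+\delta$. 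Hence the trace of $H$ stays in $\N_{r+\delta}(\X)$; since $\T\subseteq\overline{\N}_\delta(\X)\subseteq\N_{r+\delta}(\X)$, so does $\Sigma$, completing the filling.

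The hard part is this last estimate. The naive one-sided bound $\dist(H(y,t),\X)\le r+tD$ by itself only buys $r+2\delta$, which is precisely the slack appearing in \Cref{thm:uberspread}; the improvement to $r+\delta$ requires controlling $H(y,t)$ simultaneously from $a\in\X$ and from $\pi(y)\in\T$ and crucially uses that $\pi$ is a \emph{nearest-point} projection, so that $D=\dist(y,\T)$ can be bounded linearly by $r+\delta$. Convexity of $\T$ enters twice in the argument: it guarantees that $\pi_\ast S$ is null-homologous in $\T$, and, through the hypothesis, supplies the $1$-Lipschitz retraction~$\pi$.
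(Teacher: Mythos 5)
Your proof is correct, but it takes a genuinely different route from the paper's. The paper argues \emph{simplicially}: it builds simplicial maps $\pi_\epsilon:\C_\epsilon(\X\subset\Y)\to\C_\epsilon(\T\subset\Y)$ and $j_\epsilon:\C_\epsilon(\T\subset\Y)\to\C_{\epsilon+\delta}(\X\subset\Y)$ and shows $j_\epsilon\circ\pi_\epsilon$ is \emph{contiguous} to the inclusion $\C_\epsilon(\X)\hookrightarrow\C_{\epsilon+\delta}(\X)$, so that on homology the inclusion factors through $\Hom_k(\C_\epsilon(\T\subset\Y))=0$. In that argument the $1$-Lipschitz hypothesis is essential: it is what keeps $\pi_\epsilon$ scale-preserving (if $\pi$ were only $L$-Lipschitz one would land in $\C_{L\epsilon}(\T)$ and get a bound depending on the birth time). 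You instead argue at the level of singular chains via the filling-radius identity $(\star)$: you transport a representing cycle into $\conv(\X)$ by the nerve argument already used in \Cref{thm:fillrad-treewidth-conv}, slide it into $\T$ by the linear homotopy $H(y,t)=(1-t)y+t\pi(y)$, kill it inside $\T$ by convexity, and control the trace by the two-sided estimate $\dist(H(y,t),\X)\le\min(r+tD,\ \delta+(1-t)D)$ whose maximum is $\tfrac{r+\delta+D}{2}<r+\delta$. Here the \emph{nearest-point} property (giving $D=\dist(y,\T)<r+\delta$) does the work, and the specific Lipschitz constant plays no role beyond guaranteeing continuity of $H$; so your argument actually proves the stronger statement where ``$1$-Lipschitz'' is relaxed to ``continuous.'' The trade-offs: your route is more analytic and bypasses the simplicial bookkeeping, while the paper's contiguity proof is self-contained and does not invoke the (somewhat technical) passage from $\N_r(\X\subset\Y)$ to $\conv(\X)\cap\N_r(\X\subset\Y)$ that you borrow from \Cref{thm:fillrad-treewidth-conv}. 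One small point worth making explicit in your write-up: the Hausdorff distance hypothesis gives $\T\subseteq\overline{\N}_\delta(\X)$ and the support of the cycle from \Cref{thm:fillrad-treewidth-conv} lies in $\conv(\X)$ (not just $\cconv(\X)$), so $\pi$ is indeed defined where you apply it.
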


\begin{rmk}\label{rmk:stab-does-not-apply}
The above theorem \emph{cannot} be derived from the stability results in \Cref{lem:stability1} and \Cref{lem:stability2}, as those results apply \emph{exclusively} to homology classes associated with points in the persistence diagram $\wc{\PD}_k(\X\subset \Y)$. In contrast, \Cref{thm:convex-spread} applies to \emph{all elements of $\mathrm{Spec}_k(\C_\bullet(\X\subset \Y))$}, which, as expressed by \Cref{rmk:not-all-omegas-in-spec} and \Cref{prop:relation},  generally includes strictly more homology classes than those represented by points in the persistence diagram $\wc{\PD}_k(\X\subset \Y)$. Nevertheless, the proof of \Cref{thm:convex-spread} shares ideas with (certain simplicial-level)  proofs of the stability results.
\end{rmk}

\begin{proof}[Proof of \Cref{thm:convex-spread}]
Let $\delta = \dd_\h^\Y(\X, \T)$.
By assumption, there exists a $1$-Lipschitz nearest-point projection $\pi:\conv(\X)\rightarrow \T$; there is also a (possibly discontinuous) map $j:\T\to \X$ such that 
$$\|x - \pi(x)\|\leq \delta\,\,\mbox{and}\,\,\|a-j(a)\|\leq \delta\,\,\mbox{$\forall x\in \X$ and $a\in \T$}.$$

It is easy to see that $\pi$ and $j$ induce, for each $\epsilon>0$, simplicial maps
$$\pi_\epsilon:\C_{\epsilon}(\X\subset \Y)\rightarrow \C_{\epsilon}(\T\subset \Y)\,\,\mbox{and}\,\, j_\epsilon:\C_{\epsilon}(\T\subset \Y)\rightarrow \C_{\epsilon+\delta}(\X\subset \Y).$$ 

To see this in the case of $\pi$ we proceed as follows. Let $\sigma = \{x_0,\ldots,x_n\}\in \C_{\epsilon}(\X\subset \Y).$ Let $y_\sigma\in \conv(\X)$ satisfy $\|x_i-y_\sigma\|\leq \epsilon$ for all $i$. Then, since $\pi:\conv(\X)\to \T$ is 1-Lipschitz, we have
$$\|\pi(x_i) - \pi(y_\sigma)\|\leq \|x_i-y_\sigma\|\leq \epsilon$$
for all $i$. Hence, $\pi_\epsilon(\sigma)\in \C_{\epsilon}(\T\subset \Y).$ 

In the case of $j$, let $\sigma = \{x_0,\ldots,x_n\}\in \C_{\epsilon}(\T\subset \Y).$ Let $y_\sigma\in \Y$ satisfy $\|x_i-y_\sigma\|\leq \epsilon$ for all $i$. Then, 
$$\|j(x_i) - y_\sigma\|\leq \|j(x_i)-x_i\| + \|x_i-y_\sigma\|\leq \delta+\epsilon$$
for all $i$. Hence, $j_\epsilon(\sigma)\in \C_{\epsilon+\delta}(\X\subset \Y).$ 

We then have the following (not-necessarily commutative) diagram:
$$
\begin{tikzcd}
\C_{\epsilon}(\X\subset \Y) \arrow[rd,"\pi_{\epsilon}"'] \arrow[rr, "i_{\epsilon,\epsilon+\delta}"] & & \C_{\epsilon+\delta}(\X\subset \Y) \\
& \C_{\epsilon}(\T\subset \Y) \arrow[ru,"j_{\epsilon}"']&
\end{tikzcd}
$$ where $i_{\epsilon,\epsilon+\delta}$ is the obvious inclusion map.

\medskip
\noindent {\bf Claim:} For each $\epsilon>0$ the maps $j_{\epsilon}\circ \pi_{\epsilon}$ and $i_{\epsilon,\epsilon+\delta}$ are contiguous.

\medskip
To see the claim, let $\sigma = \{x_0,\ldots,x_n\}$ be any simplex in $\C_\epsilon(\X\subset \Y)$. We will prove that 
$\tau:=\sigma\cup j\circ \pi(\sigma)$ is a simplex in $\C_{\epsilon+\delta}(\X\subset \Y).$
This requires us to find a point $y_\tau \in \Y$ such that $\|v - y_\tau\|\leq \epsilon+\delta$ for all $v\in \tau$.

Let $y_\sigma\in \R^d$ be such that $\|x_i - y_\sigma\|\leq \epsilon$ for all $i$. Notice that then we can write $$\tau = \{x_0,\ldots,x_n\}\cup\{j\circ\pi(x_0),\ldots,j\circ\pi(x_n)\}.$$ Then, we let $y_\tau:=\pi(y_\sigma)$ and calculate
$$\max_{v\in \tau}\|v - y_\tau\|=\max\left(\max_i \|x_i -\pi(y_\sigma)\|,\max_i \|j\circ\pi(x_i) - \pi(y_\sigma)\|\right).$$
For the first argument of the maximum above,
\begin{align*}
\|x_i -\pi(y_\sigma)\| &\le \|x_i - \pi(x_i)\| + \|\pi(x_i) - \pi(y_\sigma)\| \\
&\le \|x_i - \pi(x_i)\| + \|x_i - y_\sigma\| \\
&\le \delta + \epsilon.
\end{align*}
For the second argument,
\begin{align*}
 \|j\circ\pi(x_i) - \pi(y_\sigma)\| &\le \|j\circ\pi(x_i)-\pi(x_i)\| + \|\pi(x_i)-\pi(y_\sigma)\| \\
 &\leq \|j\circ\pi(x_i)-\pi(x_i)\| + \|x_i-x_\sigma\| \\ 
 &\leq \delta + \epsilon,
\end{align*}
which establishes the claim.

\medskip
Now, going back to the proof of the theorem: apply the homology functor $\Hom_k$ to the diagram above and obtain the following \emph{commutative} diagram:
$$
\begin{tikzcd}
\Hom_k(\C_{\epsilon}(\X\subset \Y)) \arrow[rd,"\Hom_k(\pi_{\epsilon})"'] \arrow[rr, "\Hom_k(i_{\epsilon,\epsilon+\delta})"] & & \Hom_k(\C_{\epsilon+\delta}(\X\subset \Y)) \\
& \Hom_k(\C_{\epsilon}(\T\subset \Y)) \arrow[ru,"\Hom_k(j_{\epsilon})"']&
\end{tikzcd}
$$
Now, $\Hom_k(\C_{\epsilon}(\T\subset \Y))=0$ since $\T$ is convex. Then,
$$\Hom_k(i_{\epsilon,\epsilon+\delta}) = \Hom_k(j_{\epsilon}) \circ \Hom_k(\pi_{\epsilon}) = 0,$$
which completes the proof.
\end{proof}

\section{Bounding Extinction Times} \label{sec:extinction}

In \Cref{sec:width}, we discussed bounds for the lifespans of individual homology classes in degree $\geq k$ via different notions of $k$-width from metric geometry. However, as \Cref{fig:lowerdim} suggests, these $k$-widths do not give any bound for the lower  homology classes in lower degrees. In other words, if $\AW_{k}(\M \subset \R^N)$ (or another $k$-width) is small, the filling radius (or lifespan) of a homology class $\alpha\in \Hom_j(\M)$ for $j<k$ can still be very large. It is easy to see that the example in \Cref{fig:lowerdim} can be generalized to any dimension and codimension. In particular, $k$-widths do not say much about the size of topological features in dimensions lower than $k$.

In this section, we use a different approach and propose a global bound to the lifespans of homology classes in all degrees at once.  Our goal is to bound the following quantity.

\begin{defn}[Extinction Time] \label{defn:VRextinction} Let $\X$ be a compact metric space.  Then, $$\xi(\X):=\sup_{k\geq 0} \sup \big\{d_\omega \mid \omega \in \mathrm{Spec}_k(\V_\bullet(\X))\big\}$$
 is called the \textit{VR-extinction time} of $\X$. We similarly, define $\wc{\xi}(\X \subset \Y)$,  the \v{C}ech-extinction time for the \v{C}ech filtration  of a subset $\X$ of a Banach space $\Y$. We will use the terms VR-extinction (respectively \v{C}ech-extinction) for short.
\end{defn}

Notice that via the VR-extinction $\xi(\X)$, we are not only bounding the lifespans of homological features with birth time $=0$, but we are bounding the death time of \emph{all} homological features (in degree $1$ and higher) appearing throughout the filtration. Also, note that, as pointed out in \cite[Remark 9.8]{lim2020vietoris}, the radius of $\X$ (\Cref{def:radius}) automatically gives an upper bound for the VR-extinction time, that is, $\xi(\X)\leq \rad(\X)$. This is easy to see since the VR complex $\VR_r(\X)$ is a simplicial cone for $r\geq \rad(\X)$.  A similar statement is clearly true for \v{C}ech-extinction,  i.e. $\wc{\xi}(\X\subset \Y)\leq \rad(\X\subset \Y)$. See~\Cref{rmk:extinction} for comments on the usefulness of this type of bounds.

\begin{rmk}[Utility of Extinction Time Bounds]\label{rmk:extinction} 
Bounds on extinction times can be useful in practice for reducing the computational effort incurred by algorithms designed for calculating PH, as we now explain. By \Cref{prop:relation}, any bound on the extinction times of a compact metric space provides an upper bound on the right endpoints of every interval in the barcode of the VR-filtration for that space; see \Cref{rmk:comparison}. This is particularly useful due to the inherent structure of algorithmic procedures for computing persistence diagrams of simplicial filtrations whose complexity increases with the total number of simplices; see \cite[Chapter VII]{edelsbrunner2010computational}. For instance, both the software packages Ripser~\cite{bauer2021ripser} and Eirene~\cite{henselman2016matroid} use $\rad(X)$ as a cut-off value for the filtration parameter. Any computationally feasible approximation to the extinction time bounds below could similarly boost efficiency in practical applications.
\end{rmk}

In the following, we aim to give much finer estimates for extinction times in both the VR and \v{C}ech settings.

 \begin{rmk} [Motivating Example] \label{rmk:ellipsoid-handle} Here, we give a toy example to motivate the  notion of extinction defined above. 
 Let $\E$ be the $(N-1)$-dimensional ellipsoid in $\R^N$ given by 
 \[
 \E:=\left\{\mathbf{x}\in \R^N ~\middle\vert~ \sum_{i=1}^{N}\dfrac{x_i^2}{a_i^2}=1\right\},
 \] 
 where $a_1>a_2>\dots>a_{N}>0$. While $\E$ has trivial homology groups in low degrees, one can easily add some topology to $\E$ by adding $k$-handles for $0<k<N-1$ as follows. Fix $0<k<N-1$. Let 
 \[
 \overline{\E}:=\left\{\mathbf{x}\in \R^N ~\middle\vert~ \sum_{i=1}^{N}\dfrac{x_i^2}{a_i^2}\leq 1\right\},
 \]
 be the solid ellipsoid. Given an integer $m\geq 1$, we will use to notation $\overline{\E}^{m}:=\{\mathbf{x}\in \overline{\E}\mid x_i=0 \mbox{ for } i>m\}$. Let $\{D_1, D_2,\dots, D_{\ell_k}\}$ be $\ell_k$ disjoint small disks in $\overline{\E}^{N-k}$. Then, consider the following surgery operation. For $1\leq j\leq \ell_k$, let $\Omega_j:=\overline{\E}\cap (D_j\times \R^k)$. Let $S_j:=\E\cap \Omega_j$, and $T_j:=\closure(\partial\Omega_j \setminus S_j)$. Then, swapping the $S_j$ and the $T_j$ will give a new closed manifold 
 \[
 \wh{\E}:= \left(\E\setminus\bigcup_{j=1}^{\ell_k} S_j\right)\cup\bigcup_{j=1}^{\ell_k} T_j \subset \mathbb{R}^N.
 \] 
 See Figure \ref{fig:ehat} for an illustration. While $\Hom_k(\E)$ is trivial for $0<k<N-1$, $\mbox{rank}(\Hom_k(\wh{\E}))\geq {\ell_k}$ because of the homology classes  generated by the $k$-handles $\{T_j\}$. By choosing $\{D_j\}$ such that $\{\Omega_j\}$ are all pairwise disjoint,  one can obtain $\wh{\E}$ with nontrivial homology  in all desired degrees $k$. 

\begin{figure}
\centering
\includegraphics[width = 0.9\linewidth]{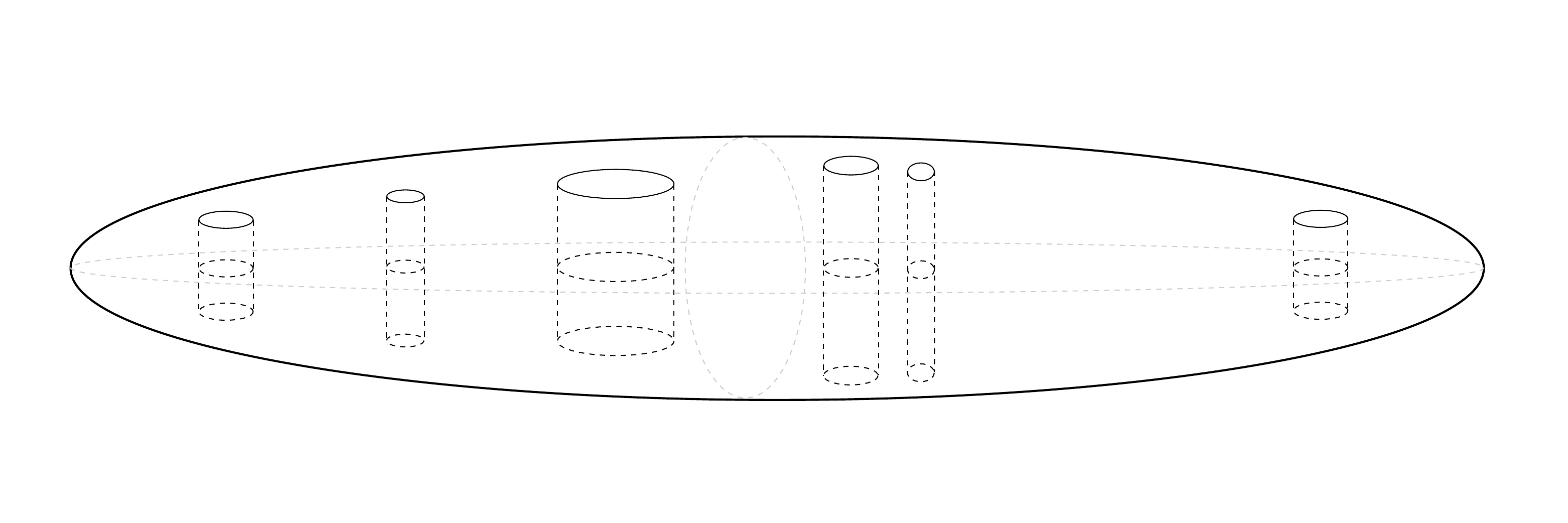}
\caption{The space $\widehat{\mathrm{E}}$ from \Cref{rmk:ellipsoid-handle} for the case $N=3$ and $k=1$.}\label{fig:ehat}
\end{figure}

Notice that, through Kolmogorov widths, \Cref{cor:PCA} yields that  $$d_\omega-b_\omega \leq \KW_k(\wh{\E} \subset \mathbb{R}^N) = a_{k+1}$$
for any class $\omega \in \mathrm{Spec}_k(\C_\bullet(\X\subset \mathrm{R}^N)).$
By suitably choosing the numbers $\{a_i\}$, all $k$-widths for $k\le \dim (\widehat{\mathrm{E}}) -1$ can be made arbitrarily large. However, it is not hard to see that, in this particular example,  the death times of all these homology classes do not exceed the smallest axis length $a_N$. This example therefore shows that $k$-widths can highly overestimate  lifespans. In other words, we have that  the \v{C}ech extinction satisfies $\wc{\xi}(\wh{\E} \subset \mathbb{R}^N)\leq a_N$. In the following sections, we estimate  extinction radii by comparing a given space to a nearby topologically trivial space.

 \end{rmk}

\subsection{Bounding \v{C}ech-extinction via Convex Hulls} \label{sec:Cechextinction}

\begin{defn}[Convexity Deficiency]\label{def:conv-def}
For a compact set $\X$ sitting in a Banach space $(\Y, \|\cdot\|)$, we define the \emph{convexity deficiency} of $\X$ as
$$\cdef(\X \subset \Y) := \dd_\h^\Y(\X, \conv(\X)) = \sup_{y \in \conv(\X)} \inf_{x \in \X} \|y-x\|.$$
\end{defn}

\thmcechextintion

\begin{proof}
For any $r \ge 0$, and any points $x_1, \ldots, x_m \in \X$, the intersection of balls $\bigcap_{i=1}^m B_r(x_i)$ is nonempty in $\Y$ if and only if the intersection $\bigcap_{i=1}^m (B_r(x_i) \cap \conv(\X))$ is non-empty. Therefore, the filtered \v{C}ech complexes $\C(\X\subset \Y)$ and $\C(\X\subset \conv(\X))$ are identical. However, the homotopy type of $\C_r(\X\subset \conv(\X))$ is trivial whenever $r \ge \cdef(\X\subset\Y)$, so no topological feature persists beyond time $\cdef(\X\subset\Y)$. \end{proof}

\begin{rmk} [Stability vs.~Extinction] \label{rmk:stab-ext} The stability theorems (\Cref{sec:stability}) do not imply the previous result for the following reasons:
\begin{enumerate}
    \item In contrast with the stability theorems \Cref{lem:stability1} and \Cref{lem:stability2}, which apply \emph{only} to homology classes associated to points in the persistence diagram $\wc{\PD}_k(\X\subset \Y)$, \Cref{thm:extinct-cdef} applies to \emph{all elements of $\mathrm{Spec}_k(\C_\bullet(\X\subset \Y))$}. Recall, from \Cref{rmk:not-all-omegas-in-spec} and \Cref{prop:relation}, that in general $\mathrm{Spec}_k(\C_\bullet(\X\subset \Y))$ contains strictly more homology classes than those which can be associated with points in the persistence diagram $\wc{\PD}_k(\X\subset \Y)$. 
\item  Strictly speaking, one should not expect that, in general, stability  holds for the lifetime or extinction of arbitrary classes in $\mathrm{Spec}_k(\V_\bullet(\X))$ or $\mathrm{Spec}_k(\C_\bullet(\X\subset \Y))$. For example, the filling radius of an $m$-dimensional manifold $\M$ coincides with both the lifetime and  extinction  of its fundamental class $[\M]\in \mathrm{Spec}_m(\V_\bullet(\M))$. However, the filling radius of $\M$
is \emph{not} stable under the Gromov--Hausdorff distance as explained in \cite[Sections 9.4 and 9.5]{lim2020vietoris}.
\item Even if we restricted ourselves to those homology classes associated with points in the persistence diagram, the stability theorems would only yield bounds on  lifespans but not on death times  (i.e. on extinction). Indeed,  \Cref{lem:stability2} implies that $$\dd_b(\wc{\PD}_k(\X\subset \Y),\wc{\PD}_k(\conv (\X)\subset\Y)))\leq \dd_\h^\Y(\X,\conv(\X)) = \cdef(\X\subset\Y).$$ 
Since $\wc{\PD}_k(\conv(\X)\subset\Y)=\emptyset$, the stability result and the definition of the bottleneck distance  \cite{edelsbrunner2010computational}  imply that for any point $(b,d)\in \wc{\PD}_k(\X\subset \Y)$, $$d-b\leq  2\cdef(\X\subset\Y).$$ However, both $b$ and $d$ can be arbitrarily large without violating this inequality. Meanwhile, \Cref{thm:extinct-cdef} bounds from above the second coordinate of \emph{every} element of $\wc{\PD}_k(\X\subset \Y)$: for any $(b,d)\in \wc{\PD}_k(\X\subset \Y)$, $$d\leq \cdef(\X\subset\Y).$$ 
\end{enumerate}
\end{rmk}

\begin{rmk}
Example in \Cref{fig:uber} shows that it is not true that the extinction $\wc{\xi}(\X \subset \Y)$ is bounded from above by the \"uberspread $\uspr(\X\subset \Y)$.
\end{rmk}

While the above remark shows that the distance to the nearest \"ubercontractible space (\"uberspread) fails to bound extinction, it might be still true that distance to a specific cleverly chosen \"ubercontractible space might give an estimate on extinction. One natural choice for the role of an \"ubercontractible space $\T$ approximating $\X \subset \Y$ is the convex hull of $\X$; this is exactly what has just been discussed; with this choice, there is an extinction bound (Theorem~\ref{thm:extinct-cdef}), but it is tempting to improve it by choosing a finer $\T$. Another natural choice of $\T$ is given by the cut-locus of the boundary of the convex hull of $\X$ (see the discussion in Appendix~\ref{sec:app-ubercontractible}). Unfortunately, the extinction time is not bounded from above by the distance from $\X$ to this $\T$ (chosen as the cut-locus of $\partial \conv(\X)$, assuming $\conv(\X)$ full-dimensional). We omit the discussion of examples in view of the negative nature of the result, but they can be obtained as subsets of a square (one can take a square and cut out a large off-centered disk).

\begin{question} [\v{C}ech Extinction and Cut-Locus of $\conv(\X)$] Let $\X$ be a compact subset of a Banach space $\Y$. Let $\C^{\partial  \conv(\X)}$ be the cut-locus of the convex hull of $\X$ (see \Cref{sec:app-ubercontractible}). 
Is it possible to upper-bound the extinction $\wc{\xi}(\X \subset \Y)$ in terms of the Hausdorff distance to $\C^{\partial  \conv(\X)}$? That is,  is it true that  $$\wc{\xi}(\X \subset \Y)\le C_\Y\cdot \dd_\h^\Y(\X, \C^{\partial  \conv(\X)})$$
for some $C_\Y>0$?

\end{question}

\subsection{Bounding VR-extinction via the Tight Span} \label{sec:VRextinction}  
To derive a bound on VR-extinction times, Theorem~\ref{thm:extinct-cdef} can be applied to the Kuratowski (distance-preserving) embedding of $\X$ into $L^\infty(\X)$. However, this approach yields a sub-optimal result. Here, we present a refinement.

The following definition is analogous to Definition \ref{def:conv-def} in that the convex hull of $\X \subset \Y$ is supplanted by $\ts(\X)$ (see~\Cref{defn:tightspan}). Recall that $\X$ naturally embeds in its tight span, so we can assume that $\X \subset \ts(\X)$.
\begin{defn}[Hyperconvexity Deficiency] \label{defn:hypdef}
The \emph{hyperconvexity deficiency} of a compact metric space $\X$ is defined as the number
$$\hcdef(\X):=\dd_\h^{\ts(\X)}(\X,\ts(\X)) = \sup_{f\in \ts(\X)}\inf_{x\in \X}\|f-\dd_\X(x,\cdot)\|_\infty.$$
\end{defn}

The following corollary to \Cref{thm:vr-neigh} is analogous to Theorem \ref{thm:extinct-cdef}. 
\begin{cor}[Bounding VR Extinction]\label{coro:abs-cont} Let $\X$ be a compact metric space. Then, $$\xi(\X)\leq 2\,\hcdef(\X).$$

Furthermore, this bound is tight (see Remark \ref{rem:tight} below).
\end{cor}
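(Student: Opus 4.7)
The plan is to mimic the proof of \Cref{thm:extinct-cdef}, but replacing $\conv(\X)$ (which killed all homology in the \v{C}ech/Banach setting) by $\ts(\X)$ (which will kill all homology in the VR/intrinsic setting), and using the tight span in place of $L^\infty(\X)$ in the functorial nerve theorem.

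First I would invoke \Cref{rem:inj}, which tells us that \Cref{thm:vr-neigh} remains valid when $L^\infty(\X)$ is replaced by any injective metric space containing $\X$ isometrically. Since $\ts(\X)$ is by definition injective (equivalently, hyperconvex by \Cref{prop:hypinj}) and admits an isometric copy of $\X$, we obtain a commutative-up-to-homotopy square of homotopy equivalences
\[
\varphi_s : \VR_{2s}(\X) \xrightarrow{\simeq} \N_s(\X \subset \ts(\X))
\]
compatible with inclusions, so that $\Hom_k(\VR_{2\bullet}(\X))$ and $\Hom_k(\N_\bullet(\X \subset \ts(\X)))$ are isomorphic as persistence modules.

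Next, fix any $s > \hcdef(\X) = \dd_\h^{\ts(\X)}(\X,\ts(\X))$. By definition of Hausdorff distance, every point of $\ts(\X)$ lies at distance strictly less than $s$ from $\X$, hence
\[
\N_s(\X \subset \ts(\X)) = \ts(\X).
\]
Since $\ts(\X)$ is contractible (\Cref{prop:ts}), its reduced homology vanishes in every degree. Therefore, for every $k \geq 0$ and every $0 < r \leq s$, the structure map
\[
\Hom_k(\N_r(\X \subset \ts(\X))) \longrightarrow \Hom_k(\N_s(\X \subset \ts(\X))) = 0
\]
is the zero map.

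Translating back via the functorial nerve theorem, the map $\Hom_k(\VR_{2r}(\X)) \to \Hom_k(\VR_{2s}(\X))$ is also zero for every $k\geq 0$ and every $0 < r \leq s$. Hence any $\omega \in \mathrm{Spec}_k(\VR_\bullet(\X))$ satisfies $d_\omega \leq 2s$. Taking the infimum over $s > \hcdef(\X)$ yields $d_\omega \leq 2\,\hcdef(\X)$ for all such $\omega$ in every degree $k$, so $\xi(\X) \leq 2\,\hcdef(\X)$, which is what we want. The only mildly delicate point is justifying the use of $\ts(\X)$ inside the persistent nerve theorem, but this is exactly the content of \Cref{rem:inj}; otherwise the argument is a direct transcription of the convex-hull argument used for \Cref{thm:extinct-cdef}.
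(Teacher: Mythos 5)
Your proof is correct and follows essentially the same route as the paper: invoke the persistent nerve theorem for an injective ambient space (Remark \ref{rem:inj}) to replace $\VR_{2\bullet}(\X)$ by $\N_\bullet(\X\subset\ts(\X))$, observe that for $s$ above the hyperconvexity deficiency the neighborhood is all of $\ts(\X)$, and use contractibility of the tight span. Your handling of the open-neighborhood boundary case (taking $s>\hcdef(\X)$ and passing to the infimum) is in fact slightly more careful than the paper's one-line proof, which asserts the identity already at $t=\hcdef(\X)$.
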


\begin{proof}
The claim follows from  \Cref{thm:vr-neigh} and \Cref{rem:inj} together with the facts that (1) $\N_t(\X \subset \ts(\X)) = \ts(\X)$ for all $t\geq \hcdef(\X)$ and (2) $\ts(\X)$ is contractible.
\end{proof}

\begin{rmk}[Tightness of the Bound]\label{rem:tight}
Let $\X$ be the unit $\ell^\infty$ sphere in $\R^2$. Then, in that case, by results of K{\i}l{\i}{\c{c}} and Ko{\c{c}}ak~\cite{kilicc2016tight}, $\ts(\X)$ is isometric to $\Big([-1,1]\times[-1,1],\ell^\infty\Big)$ and we compute that $\hcdef(\X) = 1$. Since $\N_t(\X\subset \ts(\X))\simeq \X\simeq S^1$ for \emph{every} $0<t<1$, and for $t\geq 1$ we have $\N_t(\X\subset \ts(\X)) = \ts(\X)$, which is contractible, by  \Cref{thm:vr-neigh} we have 
\begin{itemize}
\item $\V_r(\X)\simeq S^1$ for every $0<r < 2$, and
\item $\V_r(\X)$ is contractible for $r\geq 2$.
\end{itemize}
Hence,  for $\omega=[S^1]$, $d_\omega = 2$; see also \cite[Corollary 7.13]{lim2020vietoris}. 
\end{rmk}

Recall from \cite[Remark 9.8]{lim2020vietoris} that $\VR_r(\X)$ becomes contractible as soon as $r\geq \rad(\X)$. The next proposition proves that the bound in Corollary \ref{coro:abs-cont} is never worse than this bound. 
\begin{prop}\label{prop:rad-abs-cont} The inequality 
$$2\hcdef(\X)\leq \rad(\X)$$
holds for every compact metric space $\X$.
\end{prop}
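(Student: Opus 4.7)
The plan is to unwind both $\hcdef(\X)$ and $\rad(\X)$ using the explicit realization of the tight span $\ts(\X)$ as minimal functions in $L^\infty(\X)$ recalled after \Cref{prop:ts}, and then exhibit, for each $f \in \ts(\X)$, a specific point of $\X$ whose distance to $f$ is at most $\rad(\X)$.

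First, since $\X$ embeds isometrically into $\ts(\X)$ via the Kuratowski map, the Hausdorff distance simplifies: one direction of the $\sup$ in $\dd_\h^{\ts(\X)}(\X, \ts(\X))$ is automatically zero, so
\[
\hcdef(\X) = \sup_{f \in \ts(\X)} \inf_{x \in \X} \|\kappa(x) - f\|_\infty.
\]
Now I would invoke the characterization \eqref{eq:char} of elements of the tight span, which says that $\|\kappa(x) - f\|_\infty = f(x)$ for every $f \in \ts(\X)$ and every $x \in \X$. This reduces the problem to proving that
\[
\sup_{f \in \ts(\X)} \inf_{x \in \X} f(x) \leq \rad(\X).
\]

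For the core step, I would pick an optimal center $y^\ast \in \X$ that realizes the radius; such a point exists because $\X$ is compact and $y \mapsto \sup_{x \in \X} \dd_\X(x,y)$ is continuous. Then for any $f \in \ts(\X)$, using \eqref{eq:char} again at the point $y^\ast$ together with the non-negativity of $f$ (which follows from $2f(x) \ge \dd_\X(x,x) = 0$ in \Cref{def:Injme}'s characterization of $\ts(\X)$), I get
\[
f(y^\ast) \;=\; \sup_{x' \in \X}\bigl(\dd_\X(y^\ast,x') - f(x')\bigr) \;\leq\; \sup_{x' \in \X} \dd_\X(y^\ast,x') \;=\; \rad(\X).
\]
Hence $\inf_{x \in \X} f(x) \le f(y^\ast) \le \rad(\X)$, and taking the supremum over $f$ closes the argument.

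There is no real obstacle here once one remembers the $L^\infty$-realization of $\ts(\X)$ and the identity \eqref{eq:char}: the proof is essentially a bookkeeping exercise that converts Hausdorff distance in $\ts(\X)$ into evaluation of tight-span functions, at which point non-negativity of $f$ and the choice of an optimal center immediately yield the bound. The only subtlety worth flagging is existence of the optimal center $y^\ast$, which relies on compactness of $\X$; without compactness one would have to handle an $\varepsilon$-approximate center and pass to the limit, but this is a routine modification.
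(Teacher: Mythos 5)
Your proof is correct, and it reaches the stated inequality by a clean argument. Both you and the paper start from the same ingredients: the $L^\infty$-realization of $\ts(\X)$, the identity \eqref{eq:char}, the reduction $\hcdef(\X) = \sup_{f\in\ts(\X)}\inf_{x\in\X}f(x)$ (since $\X\subset\ts(\X)$), and the choice of a center realizing the radius. The route then diverges: you bound $f(y^\ast)=\max_{x'}\bigl(\dd_\X(y^\ast,x')-f(x')\bigr)\le\rad(\X)$ simply by discarding the $-f(x')$ term via non-negativity of tight-span functions. The paper instead picks the witness $x_0'$ attaining that maximum, observes that \eqref{eq:char} also gives $\|f-\kappa(x_0')\|_\infty = f(x_0')$, and adds the two to get $f(x_0)+f(x_0')=\dd_\X(x_0,x_0')<\delta$, whence $\min\bigl(f(x_0),f(x_0')\bigr)\le\delta/2$. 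That extra step buys the stronger conclusion $\hcdef(\X)\le\tfrac12\rad(\X)$ (tight for the $\ell^\infty$ circle, where $\hcdef=1$ and $\rad=2$; see \Cref{rem:tight}), whereas your argument only yields the stated $\hcdef(\X)\le\rad(\X)$. One small bookkeeping nit: you cite \Cref{def:Injme} for non-negativity of $f$, but that is the definition of an injective metric space; the correct source is the membership $f\in\Delta(\X)$ from the $L^\infty$-realization recalled after \Cref{prop:ts}, which gives $2f(x)\ge\dd_\X(x,x)=0$ upon taking $x'=x$.
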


\begin{rmk}[Comparison of $\rad(\X)$ and $\hcdef(\X)$]
The upper bound given by Corollary \ref{coro:abs-cont} can be much smaller than $\rad(\X)$.
Indeed, Let $\X$ be any \emph{metric tree}, then, in that case (by item (3) of Proposition \ref{prop:ts}) $\ts(\X)=\X$ so that $\hcdef(\X)=0$. However, $\X$ can be chosen so that $\rad(\X)$ (and also its spread) are arbitrarily large. 
\end{rmk}

\begin{proof}[Proof of Proposition \ref{prop:rad-abs-cont}]
 Assume that $\delta>\rad(\X)$
 and let $x_0\in \X$ be a point such that $\dd_\X(x_0,x)<\delta$ for all $x\in \X$. Pick any $f\in \ts(\X)$ and recall that, according to equation (\ref{eq:char}), we have that 
$$f(x_0) = \max_{x'\in \X}\big(\dd_\X(x_0,x')-f(x')\big).$$

Let $x'_0 \in \X$ be such that $f(x_0) = \dd_\X(x_0,x'_0) - f(x_0')$.
Notice that then, by equation (\ref{eq:char}), we have both 
$$\|f-\dd_\X(x_0,\cdot)\|_\infty = f(x_0)\,\,\mbox{and}\,\,\|f-\dd_\X(x_0',\cdot)\|_\infty = f(x_0').$$
Adding these two expressions together we obtain that 
$$\|f-\dd_\X(x_0,\cdot)\|_\infty + \|f-\dd_\X(x_0',\cdot)\|_\infty = f(x_0) + f(x_0') = \dd_\X(x_0,x_0') <\delta.$$
From this, we conclude that 
$$\inf_{x\in \X}\|f-\dd_\X(x,\cdot)\|_\infty\leq \min\big(\|f-\dd_\X(x_0,\cdot)\|_\infty, \|f-\dd_\X(x_0',\cdot)\|_\infty\big) < \frac{\delta}{2}.$$
Since $f\in \ts(\X)$ was arbitrary, this proves that $\hcdef(\X) <  \frac{\delta}{2}$ from which the claim follows.
 \end{proof}

\section{Final Remarks}\label{sec:remarks}

Here we provide some remarks that could suggest further exploration.

\paragraph*{Widths and Lifespans.} Although our results providing bounds on lifespans via widths are primarily theoretical, they offer practical value both in terms of improving the interpretability of PH features and in applications. Computing the exact Alexandrov, Urysohn, or Kolmogorov 
$k$-width for a given set 
$\X$ is often computationally challenging. However, by definition, these widths arise as infima of certain measurements over 
$k$-dimensional spaces (the ``cores"). While identifying the optimal 
$k$-dimensional space could be highly complex, any meaningful and well-chosen 
$k$-dimensional space can yield relevant measurements that serve as upper bounds for these widths and, consequently, for the lifespans. Thus, even if calculating the optimal bound is infeasible, our results can be effectively leveraged to provide rough yet meaningful upper bounds for the lifespans of significant topological features. See \Cref{rmk:extinction} for other considerations related to  potential uses of our bounds.

We highlight that our framework enables a bidirectional exchange of concepts, integrating ideas from metric geometry into applied algebraic topology and supporting their application in the reverse direction. 
Specifically, since  lifespans are bounded above by various notions of width introduced in \Cref{sec:width}, they consequently provide lower bounds for these quantities.
In other words, for a given metric space $\X$, the maximum lifespan over classes in $\mathrm{Spec}_k(\VR_\bullet(\X))$ (with zero birth time) serves as a lower bound for the corresponding width (e.g., $\UW_{k-1}(\X)$). 
This relationship offers a practical  approach for estimating widths by leveraging topological persistence. We exemplify this now.

\begin{ex}[The Urysohn Width of the $n$-Torus]\label{ex:uw-ph} Let $n\geq 1$ be any integer and $a_1\geq a_2\geq \cdots \geq a_n>0$. Consider the $n$-torus $T^n := a_1 S^1 \times a_2 S^1\times \cdots \times a_n S^1$ endowed with the $\ell^\infty$ product metric, and where each $a_i S^1$ factor has the geodesic metric (with diameter $\pi a_i$). Then, we claim that for any $k\in\{1,\ldots,n\}$ we have
$$\frac{2\pi}{3}a_k\leq \UW_{k-1}(T^n)\leq \pi a_k.$$
The upper bound  can be trivially obtained by considering the projection onto the first $k-1$ factors of $T^n$.  The lower bound can be obtained through an  argument via PH as follows:
\begin{enumerate}
    \item[(1)] As proved by  Adamaszek and Adams in~\cite{adamaszek2017vietoris}, $I:=\left(0,\tfrac{2\pi}{3}\right]$ is the only bar in the VR-barcode of  $S^1$ whose left endpoint is zero. Furthermore, this interval appears in degree-1. 
    \item[(2)] By the K\"unneth formula for the VR-barcodes of  $\ell^\infty$-products of compact metric spaces (see e.g. \cite[Theorem 6.1 and Example 6.4]{lim2020vietoris}), the only intervals in the degree-$k$ VR-barcode of $T^n$ with zero left endpoint must arise from intersecting exactly $k$ of the intervals $a_1I,a_2I,\cdots, a_nI$ each corresponding to the degree-1 VR-barcode of one of the $n$ $S^1$-factors of $T^n$. These intersections are precisely of the form
 $(0,d(L)]$ where $L\subset \{1,\ldots,n\}$ s.t.  $|L|=k$ and
    $$d(L):=\tfrac{2\pi}{3}\min_{\ell\in L} a_\ell.$$
   Then, the maximum  of $d(L)$ over all such subsets $L$ equals $\tfrac{2\pi}{3}a_k$.
   \item[(3)] By \Cref{thm:fillrad-urysohn-width} and \Cref{prop:fillrad} we now conclude that $\tfrac{2\pi}{3}a_k\leq \UW_{k-1}(T^n).$
\end{enumerate}
Compare with \cite[(E$_1$)]{gromov1988width} and see also \cite[page 8]{gromov1983filling}. 
\end{ex}

\paragraph*{Standard PCA vs. Lifespans.} In \Cref{sec:PCA}, we give bounds for \v{C}ech lifespans via the variances $\{\nu_k\}$  induced by PCA$_\infty$ (\Cref{cor:PCA}) As one can easily notice, in order to have such a rigorous bound, we modified the usual PCA definition and considered an $\ell^\infty$-variant. However, this makes the PCA$_\infty$ structure highly sensitive to  outliers in comparison with the original PCA. On the other hand, standard PCA is a mainstream, highly effective dimension reduction tool for real-life applications with several very efficient computational techniques available. While our results do not say anything about the relation between the original PCA and PH, an experimental result relating the lifespans of bars in the persistence diagram of the \v{C}ech filtration $\C_\bullet(\X\subset \R^N)$
 with variances $\wt{\nu}_{k+1}(\X)$ of original PCA would be very interesting, for a given finite set of points $\X\subset \R^N$. It would be particularly interesting  and useful for real-life applications to carry out a statistical comparative analysis for random finite subsets $\X$ in $\R^N$ (and for $k\geq 1$).

\paragraph*{Principal Curves and Surfaces.}  
One can notice that when describing the width-based  arguments in \Cref{sec:width}, we first introduce a $k$-dimensional optimal core $\Lambda_k$ for a given set $\X$, then the $k$-width $\mathrm{W}_k(\X)$ is defined as some kind of ``distance'' from $\X$ to $k$-core $\Lambda_k$. Hence, when $\mathrm{W}_k(\X)$ is small, in metric geometry, $\X$ is regarded as ``essentially $k$-dimensional''. There is a similar notion in statistics called {\em Principal Curves and Surfaces}; see Hastie and Stuetzle~\cite{hastie1989principal}, Delicado~\cite{delicado2001another}, and Ozertem and Erdogmus~\cite{ozertem2011locally}. While principal curves and surfaces are defined as $1$- and $2$-dimensional objects, one can easily generalize the idea to any dimension $k$, e.g. principal $k$-manifolds. In our setting, for a compact subset $\X$ in a Banach space $\Y$, principal curves and surfaces can be considered as $k$-dimensional objects $\Sigma_k$ which minimize the $\ell^2$-distance from $\X$ to $\Sigma_k$ for $k=1,2$ with some normalization condition on $\Sigma_k$. In this sense, principal curves and surfaces can be viewed as nonlinear generalizations of principal component analysis (PCA).

Similarly, for a given set $\X$, our $k$-cores and principal $k$-manifolds can be regarded as analogous constructs which extend the underlying idea to a broader geometric framework. In \Cref{sec:PCA}, we defined PCA$_\infty$ as $\ell^\infty$-version of the original $\ell^2$ PCA . Similarly, our $k$-cores $\Lambda_k$ minimize the $\ell^\infty$-distance between $\X$ and $\Lambda_k$ while principal $k$-manifolds $\Sigma_k$ minimize the $\ell^2$-distance between $\X$ and $\Sigma_k$. By using this analogy, as principal curves and surfaces are suggested as {\em dimension reduction} method, one can consider our $k$-cores represent the essential structure of $\X$ when $\mathrm{W}_k(\X)$ is small. Furthermore, just like the discussion in the preceding paragraph  (PCA vs PCA$_\infty$), it would be interesting to carry out a statistical study of the relationship between the \v{C}ech lifespans and $\ell^2$-distance to principal $k$-manifolds.

\appendix

\section{Auxiliary Properties of Widths}\label{sec:app-AW-UW}
\setcounter{thm}{0}

It was mentioned in \Cref{sec:width-background} that
$$\AW_k(\X\subset\Y) \leq \UW_k(\X) \leq 2\AW_k(\X\subset\Y).$$

The right-hand side inequality is trivial, whereas the inequality on the left requires an explanation.

\begin{lem}[Alexandrov~\cite{alexandroff1933urysohnschen}]
\label{lem:width-width}
For any compact set $\X$ in a Banach space $\Y$,
\[
\AW_k(\X\subset \Y) \leq \UW_k(\X).
\]
\end{lem}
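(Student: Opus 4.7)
My plan is to fix $\delta > \UW_k(\X)$ and pick a continuous map $f: \X \to K$ to a $k$-dimensional finite simplicial complex $K$ (endowed with some fixed metric, say the piecewise Euclidean one) satisfying $\sup_{p\in K} \diam f^{-1}(p) \le \delta$. The goal is to produce the required map $g: \X \to \Y$ as a barycentric average
$$g(x) = \sum_v b_v(f(x))\, x_v,$$
where $v$ ranges over the vertices of a sufficiently fine subdivision of $K$, the $b_v$ are the piecewise linear barycentric coordinate functions, and the $x_v \in \X$ are chosen so that $\|x - x_v\|$ is small whenever $b_v(f(x)) > 0$. The map $g$ will then factor through a simplicial map from the subdivided complex into $\Y$, so its image lies in a finite simplicial complex in $\Y$ of dimension at most $k$ (after a small generic perturbation of the $x_v$, if needed, to ensure it is a genuine simplicial complex rather than a degenerate one).

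The main technical step I would prove first is a uniform version of the small-fiber hypothesis: for every $\epsilon > 0$ there exists $\eta > 0$ such that every $p \in f(\X)$ satisfies $\diam f^{-1}(B_\eta(p)) \le \delta + \epsilon$. The argument is a standard compactness one. If it failed at some fixed $p$, one would find, for each $\eta_n \to 0$, a pair of points in $f^{-1}(B_{\eta_n}(p))$ at distance exceeding $\delta + \epsilon$; by compactness of $\X$ and continuity of $f$, these pairs would accumulate to a pair in $f^{-1}(p)$ at distance at least $\delta+\epsilon$, contradicting $\diam f^{-1}(p) \le \delta$. A Lebesgue-number argument on the compact set $f(\X)$ then produces a single $\eta$ that works everywhere. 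I would then subdivide $K$ to a complex $K'$ in which every open star $\mathrm{st}(v)$ has diameter less than $\eta$, so that each $U_v := f^{-1}(\mathrm{st}(v))$ has diameter at most $\delta + \epsilon$.

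To finish, for each vertex $v$ of $K'$ with $U_v \neq \emptyset$ I pick $x_v \in U_v$ and define $g$ as above using the barycentric coordinates $b_v$ of the subdivision. Since $\{b_v \circ f\}_v$ is a partition of unity on $\X$ subordinate to the cover $\{U_v\}_v$, for any $x \in \X$ only vertices with $x \in U_v$ contribute to $g(x)$, and the corresponding $x_v$ lies in $U_v$ as well, giving $\|x - x_v\| \le \delta + \epsilon$. Convexity of the norm then yields $\|x - g(x)\| \le \sum_v b_v(f(x))\, \|x - x_v\| \le \delta + \epsilon$, hence $\AW_k(\X\subset \Y) \le \delta + \epsilon$. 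Letting $\epsilon \to 0$ and then $\delta \to \UW_k(\X)$ gives the claim. I expect the main obstacle to be the uniform compactness/Lebesgue-number step above: it is the bridge between the pointwise small-fiber hypothesis and the uniform estimate on preimages of open stars that the barycentric construction needs.
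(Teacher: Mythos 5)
Your proof is correct and follows essentially the same route as the paper: the barycentric average $g(x)=\sum_v b_v(f(x))\,x_v$ is exactly the composition $\gamma\circ f$ the paper uses, with $\gamma$ the linear extension of $v\mapsto x_v$, and your compactness/Lebesgue-number argument just makes explicit the fine-subdivision step the paper takes for granted. The minor cosmetic differences are your $\le\delta$ plus $\epsilon$-relaxation (the paper works with strict inequalities throughout) and the remark about perturbing the $x_v$, which is unnecessary since the image of a piecewise-linear map of a $k$-complex is always a polyhedron of dimension at most $k$.
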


\begin{proof}
Suppose $\UW_k(\X) < \delta$, and let us show that $\AW_k(\X\subset \Y) < \delta$. There is a continuous map $f: \X \to \Delta^k$ to a finite $k$-dimensional complex with fibers of diameter $< \delta$. Subdivide $\Delta^k$ very finely, so that the preimage of any open star of $\Delta^k$ under the map $f$ has diameter $< \delta$.\footnote{Recall that the open star $S_v$ of a vertex $v \in \Delta^k$ is the union of the relative interiors of all simplices of $\Delta^k$ that contain $v$.} For each vertex $v \in \Delta^k$, pick a point $c_v$ in the preimage of the open star $S_v$ (unless this preimage is empty; in this case, we can safely remove $S_v$ from $\Delta^k$). The ball $B_\delta(c_v) \subset \Y$ of radius $\delta$ centered at $c_v$ covers $f^{-1}(S_v)$. Consider an auxiliary map $\gamma: \Delta^k \to \Y$ defined by sending any vertex $v \in \Delta^k$ to $c_v \in \Y$, and then extending linearly on $\Delta^k$. Compose this map with $f$, and consider $\gamma \circ f: \X \to \Y$. Its image is a simplicial complex of dimension at most $k$, and to complete the proof it suffices to show that every point $x \in \X$ is moved by distance $\|x - \gamma(f(x))\| < \delta$. Let $f(x)$ lie in the relative interior of a simplex of $\Delta^k$ with the vertices $v_0, \ldots, v_m$. Then $x \in f^{-1}(S_{v_i}) \subset B_\delta(c_{v_i})$, and $\|x- c_{v_i}\| < \delta$, where $0\leq i \leq m$. By construction of $\gamma$, the point $\phi(f(x))$ lies in the convex hull of the points $c_{v_i}$, $0\leq i \leq m$. Therefore, $\|x - \gamma(f(x))\|$ does not exceed the maximum of $\|x - c_{v_i}\|$ over $0\leq i \leq m$, and this maximum is less than $\delta$.
\end{proof}

\begin{rmk}\label{rmk:aw=uw}
Depending on the geometry of $\Y$, this inequality may be slightly improved if for each vertex $f^{-1}(S_v)$ we cover $f^{-1}(S_v)$ by a ball of the smallest possible radius. For example, if $\Y = \R^N$ is Euclidean, then it follows from Jung's theorem~\cite{jung1901ueber} that this radius can be taken to be $\delta \sqrt{\frac{N}{2(N+1)}}$ (instead of $\delta$). The rest of the proof runs without changes, and the final result is $\AW_k(\X \subset \R^N) \leq \sqrt{\frac{N}{2(N+1)}} \UW_k(\X) < \frac{1}{\sqrt{2}} \UW_k(\X)$. Another extreme example is when $\Y$ is hyperconvex (for example, $L^\infty(\X)$), and any bounded set $\A$ can be covered by a ball of radius $\frac12 \diam(\A)$. In this case, our estimates actually imply that $\UW_k(\X) = 2\AW_k(\X\subset\Y)$. 
\end{rmk}

The following property was used in \Cref{sec:treewidth}.

\begin{lem}
\label{lem:treewidth-continuity}
The treewidth enjoys the following continuity property. Let $\X_1 \supset \X_2 \supset \cdots$ be a nested sequence of compact sets in a Banach space $\Y$, and let $k$ be a non-negative integer. Then
\[
\lim_{i \to \infty}\TW_k(\X_i \subset \Y) = \TW_k\left(\bigcap_i \X_i \subset \Y\right).
\]
\end{lem}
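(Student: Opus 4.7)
The plan is to establish the two inequalities $\TW_k(\X_\infty \subset \Y) \le \liminf_i \TW_k(\X_i \subset \Y)$ and $\limsup_i \TW_k(\X_i \subset \Y) \le \TW_k(\X_\infty \subset \Y)$, where $\X_\infty := \bigcap_i \X_i$ (a compact subset of $\Y$). As a preliminary matter, I would note that for the purposes of this argument we may as well work with the equivalent characterization of treewidth in which the image of the witness map is merely \emph{contained in} a $k$-dimensional simplicial complex with trivial $k$-th reduced homology (rather than being such a complex); the proof of \Cref{thm:fillrad-treewidth} in fact only uses this weaker condition, and the resulting infimum coincides with $\TW_k$.

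With this characterization, monotonicity is essentially immediate: if $f \colon \X_i \to \Y$ is a witness for $\TW_k(\X_i \subset \Y)$ with image inside a $k$-complex $T$ having $\Hom_k(T) = 0$, then $f|_{\X_{i+1}}$ is a witness for $\TW_k(\X_{i+1} \subset \Y)$ with the same target and a (weakly) smaller displacement. Hence $\{\TW_k(\X_i \subset \Y)\}_i$ is non-increasing in $i$, and the same reasoning gives $\TW_k(\X_\infty \subset \Y) \le \TW_k(\X_i \subset \Y)$ for every $i$. Passing to the limit yields the ``$\ge$'' direction of the desired equality.

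For the reverse direction, fix $\delta > \TW_k(\X_\infty \subset \Y)$ and pick a witness $f \colon \X_\infty \to \Y$ whose image lies in a finite $k$-complex $T \subset \Y$ with $\Hom_k(T) = 0$ and $\sup_{x \in \X_\infty} \|f(x)-x\| < \delta$. The key step is to extend $f$ to an open neighborhood of $\X_\infty$ in $\Y$: since $T$ is a finite simplicial complex, it is an Absolute Neighborhood Retract, so $f$ admits a continuous extension $\tilde f \colon U \to T$ on some open neighborhood $U$ of $\X_\infty$ in $\Y$. After shrinking $U$ if necessary, I can arrange that $\|\tilde f(x)-x\| < \delta$ for every $x \in U$, using continuity of the displacement function together with the strict inequality on $\X_\infty$.

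Finally, a standard compactness argument shows that a nested sequence of compact sets in a metric space eventually lies inside any open neighborhood of its intersection; thus $\X_i \subset U$ for all sufficiently large $i$, and for such $i$ the restriction $\tilde f|_{\X_i}$ witnesses that $\TW_k(\X_i \subset \Y) < \delta$. Letting $\delta \downarrow \TW_k(\X_\infty \subset \Y)$ completes the proof. The main conceptual step is the ANR extension, which propagates the witness from $\X_\infty$ to a neighborhood in $\Y$; the only potential nuisance is reconciling the literal reading of the treewidth definition (``image is a complex'') with the more flexible form (``image is contained in a complex'') actually used throughout the paper, but this has no bearing on the argument.
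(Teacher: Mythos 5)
Your proof is correct, but it takes a genuinely different route from the paper's. For the substantive direction $\lim_i \TW_k(\X_i) \le \TW_k(\X_\infty)$, the paper builds the extension of the witness map by hand: it finely subdivides the target complex $\Delta$, uses uniform continuity of $f$ to control the preimages of small pieces of $\Delta$, then inflates the resulting closed cover of $\X_\infty$ to an open cover of a neighborhood (preserving the nerve), and maps that open neighborhood to the nerve via a partition of unity, keeping the displacement under control. You instead appeal to the fact that a finite simplicial complex is an ANR (equivalently an ANE for metrizable spaces), so a continuous map $\X_\infty \to T$ defined on a closed subset of the metrizable space $\Y$ extends to an open neighborhood $U$; you then shrink $U$ using continuity of the displacement $x \mapsto \|\tilde f(x)-x\|$, and conclude by the standard compactness fact that a decreasing sequence of compacta eventually enters any open neighborhood of its intersection. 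The trade-off is that the paper's argument is longer but elementary and self-contained (it essentially re-proves the relevant bit of ANR theory in its specific setting), whereas yours is shorter and cleaner but outsources the work to the ANR extension theorem. One small point of presentation: your preliminary remark about the ``image is'' versus ``image lies in'' reading of \Cref{defn:treewidth} is a reasonable concession the paper itself makes implicitly (its own proof of this lemma uses ``image lies in''), but in your argument it is not even strictly needed: since the extension $\tilde f$ restricts to $f$ on $\X_\infty \subset \X_i$ and has range inside $T$, one in fact has $\tilde f(\X_i) = T$ whenever $f(\X_\infty) = T$, so the literal reading of the definition is preserved. Also, your glancing claim that the proof of \Cref{thm:fillrad-treewidth} ``only uses'' the weaker condition deserves a qualification: the filling chain produced there lives in the core complex, so for that argument one needs the core to sit inside the $\delta$-neighborhood of $\X$, which is automatic under the ``image is'' reading but under the ``image lies in'' reading requires a harmless preliminary step of passing to the subcomplex of a fine subdivision of $T$ that meets $f(\X)$. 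Neither of these remarks affects the correctness of your proof of the continuity lemma.
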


\begin{proof}
Denote $\X = \bigcap \X_i$, $w = \TW_k(\X\subset\Y)$, and let $f:\X\to \Y$ be a witness map: a continuous map whose image lies in a finite contractible $k$-dimensional simplicial complex $\Delta \subset Y$, and such that $\|x-f(x)\| < w + \epsilon/3$ for all $x \in \X$ and some arbitrarily chosen $\epsilon > 0$. For each point $y \in \Delta$, the fiber $f^{-1}(y)$ lies in the open ball $B_{w+\epsilon/3}(y)$ (note that $f^{-1}(y)$ can be empty). Since $f$ is continuous on the compact set $\X$, it is uniformly continuous, hence there is a tiny radius $\rho > 0$ such that such that the ``thickened fiber'' $f^{-1}(B_\rho(y))$ lies in the open ball $B_{w+2\epsilon/3}(y)$, for each $y \in \Delta$. Since $\Delta$ is a finite complex of dimension $k$, there is a finite closed cover $\bigcup D_j = \Delta$ such that its multiplicity is $k+1$, its nerve is homotopy equivalent to $\Delta$, each $D_j$ fits in a ball $B_\rho(y_j)$, and $C_j := f^{-1}(D_j)$ fits in the ball $B_{w+2\epsilon/3}(y_j)$. 

We claim that the sets in the closed cover $\bigcup C_j = \X$ can be inflated a little bit while preserving their intersection pattern. For each collection of indices $J = \{j_1, \ldots, j_n\}$ such that the intersection $\bigcap\limits_{j \in J} C_j$ is empty, it follows from the compactness of $\X$ that the number
\[
\delta_J = \min\limits_{x\in \X} \max_{j \in J} \dist(x, C_j)
\]
is attained and positive. Take a positive $\delta$ smaller than $\epsilon/3$ and also smaller than each $\delta_J$ over all collections $J$ such that $\bigcap\limits_{j \in J} C_j = \varnothing$. Consider the open cover $\{U_j\}$ of $\X$, where $U_j = \N_\delta(C_j \subset \Y)$. It has the same nerve as $\{C_i\}$, and each $U_j$ is contained in $B_{w+\epsilon}(y_j)$. We have 
\[
\X \subset \bigcup_j U_j = \N_\delta(\X\subset\Y). 
\]
Now pick a partition of unity $\{\psi_j\}$ subordinate to $\{U_j\}$. Use it to map $\bigcup_j U_j$ to its nerve. Namely, a point $x \in \bigcup_j U_j$ is mapped to $\sum\limits_j \psi_j(x) y_j$. This gives a (possibly non-surjective) map $\bigcup_j U_j \to \Delta$. Since $U_j \subset B_{w+\epsilon}(y_j)$, every point under this map is shifted by distance less than $w+\epsilon$.

Notice that the open set $\bigcup_j U_j$ contains all $\X_i$ for $i$ large enough, as it can be easily deduced from compactness. Therefore, we have a map from $\X_i$, for all $i$ large enough, to a contractible $k$-dimensional simplicial complex inside $\Y$, and every point is shifted by a distance less than $w+\epsilon$. Hence,
\[
\TW_k(\X_i \subset \Y) - \TW_k(\X \subset \Y) < \epsilon.
\]
Since $\epsilon$ was arbitrary, the result follows.
\end{proof}

\section{\"Ubercontractible Sets}\label{sec:app-ubercontractible}
\setcounter{thm}{0}

Here we only work in $\Y = \mathbb{R}^N$ with Euclidean metric, and we are interested in $0$-\"ubercontractible sets, which we call \"ubercontractible. A rich source of \"ubercontractible sets comes from the \emph{cut-locus} construction, usually considered in a more general Riemannian setting; see, e.g.,  Wolter~\cite{wolter1985cut}. A very close concept is that of \emph{skeleta} (see, e.g., Fremlin~\cite{fremlin1997skeletons}), different from our definition only in that we take the closure. In computational geometry, low-dimensional skeleton construction are also called \emph{medial axes}; see, e.g., the survey by Saha, Borgefors and Sanniti di Baja~\cite{saha2016survey}. We only formulate the definition of the cut-locus for convex hypersurfaces in $\Y = \mathbb{R}^N$.

\begin{defn}
 Let $K \subset \Y$ be a convex body (that is, a compact convex set with a nonempty interior). For each $x \in K$, consider the largest closed ball centered at $x$ and contained in $K$; let $r^{\partial K}(x) \ge 0$ be its radius, and $B^{\partial K}(x) \subset \partial K$ be the set of the points where this ball touches the boundary of $K$. The \emph{cut-locus} of $\partial K$ is the closure of the set of all such $x \in K$ for which the cardinality of $B^{\partial K}(x)$ is at least $2$:
 \[
 C^{\partial K} = \closure \big(\{x \in K: |B^{\partial K}(x)|>1\}\big).
 \]
 There is a canonical retraction $c^{\partial K}: K \to C^{\partial K}$ defined as follows. For $x \in C^{\partial K}$, set $c^{\partial K}(x) = x$. For each point $x \in K\setminus C^{\partial K}$, let $b^{\partial K}(x)$ be the only element of $B^{\partial K}(x)$, and consider the largest closed ball contained in $K$ and touching $\partial K$ at $b^{\partial K}(x)$. The center of this ball belongs to the cut-locus and will be denoted $c^{\partial K}(x)$.
\end{defn}

\begin{thm}\label{thm:cutlocus}
 The cut-locus $C^{\partial K}$ of the boundary of a convex polytope $K$ is \"ubercontractible.
\end{thm}

The key lemma in the proof comes from the Morse theory for the distance functional. It was applied by B\'ar\'any, Holmsen and Karasev in~\cite[Section~3]{barany2015topology} to give a sufficient condition for a set to be contractible, but the proof there, in fact, guarantees that the set is \"ubercontractible.

\begin{lem}[{cf.~\cite[Theorem~2]{barany2015topology}}]\label{lem:morse}
Let $\T \subset \Y$ be a union of finitely many compact convex sets. For each $x \notin \T$ consider the largest ball centered at $x$ whose interior does not meet $\T$. Let $B^\T(x)$ be the set of the points where this ball touches $\T$. Suppose that for each $x \notin \T$ we have $x \notin \conv(B^\T(x))$. Then $\T$ is \"ubercontractible.
\end{lem}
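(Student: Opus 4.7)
The plan is to interpret the hypothesis as a Morse-theoretic ``no critical points'' condition for the distance function $d_\T(y) := \dist(y, \T)$ on $\Y \setminus \T$, and then use a descending pseudo-gradient flow to deformation retract every open neighborhood $\N_r(\T)$ onto $\T$. Since $\N_r(\T)$ will be homotopy equivalent to $\T$, we are reduced to establishing contractibility of $\T$ itself, which follows once the flow is understood.

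First, for each $x \notin \T$ the assumption $x \notin \conv(B^\T(x))$, combined with the hyperplane separation theorem (applied to the compact convex set $\conv(B^\T(x))$ and the point $x$), yields a unit vector $v_x \in \Y$ with $\langle v_x,\, b - x \rangle \ge \alpha_x > 0$ for all $b \in B^\T(x)$. Because $\T$ is a finite union of compact convex sets, the multivalued assignment $y \mapsto B^\T(y)$ is upper semi-continuous, and a short first-order computation (writing $\|y + tv_x - b\|^2 = \|y-b\|^2 - 2t\langle v_x, b-y\rangle + O(t^2)$ uniformly over $b \in B^\T(y)$ for $y$ near $x$) shows that $d_\T(y + tv_x) \le d_\T(y) - \tfrac{\alpha_x}{2}t$ for all $y$ in a small neighborhood $U_x$ of $x$ and all sufficiently small $t > 0$. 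These local descent vectors can be patched into a locally Lipschitz, nowhere-vanishing vector field $V$ on $\Y \setminus \T$ via a partition of unity subordinate to a locally finite subcover of $\{U_{x_i}\}$; convex combinations of strict first-order descent directions of a convex functional (which $y \mapsto \|y-b\|$ is) remain strict descent directions, so $V$ inherits a uniform descent rate on compact subsets of $\Y \setminus \T$.

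With the flow $\Phi_t$ of $V$ in hand, standard arguments give that every trajectory starting in $\N_r(\T)$ reaches $\T$ in finite time $\tau(y)$, and that the hitting time $\tau$ is continuous on $\N_r(\T)$ (with $\tau \equiv 0$ on $\T$). The map $H(y, s) := \Phi_{s\tau(y)}(y)$ is then a strong deformation retraction of $\N_r(\T)$ onto $\T$, so $\N_r(\T) \simeq \T$ for every $r > 0$. Contractibility of $\T$ itself is obtained by a further application of the same flow: for any chosen $p \in \T$ one extends the flow past $\T$ by a vector field inside $\T$ pointing toward $p$ (well-defined up to the standard subdivision of $\T$ into its convex pieces, since the hypothesis rules out the disconnected-component obstruction), yielding a contraction of $\N_r(\T)$ to $\{p\}$ and hence $0$-\"ubercontractibility of $\T$.

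The main obstacle I anticipate is the boundary analysis --- verifying that the flow $\Phi_t$ reaches $\T$ in finite, continuously-varying time, and that this happens transversally enough to produce a genuine deformation retraction rather than merely a homotopy equivalence. This is controlled by the fact that the separation constants $\alpha_x$ are bounded away from zero on compact subsets of $\Y \setminus \T$, together with the upper semi-continuity of $B^\T$ that the tameness assumption guarantees; these together reduce the non-smooth analysis to the classical Grove--Shiohama-style framework. The secondary subtlety is justifying the final ``contract $\T$ to a point'' step, which is where the finite-union-of-convex-sets structure of $\T$ is needed in an essential way beyond the hypothesis.
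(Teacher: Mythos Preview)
Your approach is the standard Grove--Shiohama critical-point theory for distance functions, which is precisely what the paper invokes (via the cited reference of B\'ar\'any et al.) without reproducing the argument. The core of your proposal---separating hyperplanes give local descent directions, a partition of unity assembles them into a global descending pseudo-gradient, and the resulting flow retracts $\N_r(\T)$ onto $\T$ for every $r>0$---is correct and is exactly the content of that reference, with the added observation (which the paper makes explicit) that the same flow handles all $r$ simultaneously, yielding \"ubercontractibility rather than mere contractibility.

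There is one genuine gap, in your final paragraph. The justification for the contractibility of $\T$ itself is not right: there is no reason one can ``extend the flow past $\T$ by a vector field inside $\T$ pointing toward $p$''. A finite union of convex sets need not admit any such internal flow, and nothing in the hypothesis produces one. The correct (and much simpler) argument is to observe that the hypothesis $x \notin \conv(B^\T(x))$ is assumed for \emph{every} $x \in \Y \setminus \T$, not merely for $x$ in a tube around $\T$. Hence your descending flow is defined on all of $\Y \setminus \T$, and it retracts the contractible space $\Y$ (or any large ball containing $\T$) onto $\T$; contractibility of $\T$ comes for free.

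A smaller technical point: your concern that trajectories reach $\T$ in finite, continuously varying time is legitimate, since the separation constants $\alpha_x$ are only bounded away from zero on compacta of $\Y \setminus \T$ and could degenerate as $x \to \T$. The standard remedy is to reparametrize: normalize the field $V$ so that the derivative of $d_\T$ along trajectories is identically $-1$ (this is possible because the unnormalized descent rate is everywhere positive and locally Lipschitz). Then the hitting time is exactly $\tau(y) = d_\T(y)$, which is manifestly finite and continuous, and the boundary analysis becomes trivial.
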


\begin{proof}[Proof of \Cref{thm:cutlocus}]
Suppose $K$ is a convex polytope. It can be shown that $C^{\partial K}$ is a polyhedral complex of codimension $1$ in $Y$. Let $F$ be a facet of $K$, and let $n_F$ be the corresponding inner normal vector. The (relative) boundary of $F$ lies in $C^{\partial K}$. The map $c^{\partial K}$, restricted to $F$, send each point $x \in F$ along $n_F$ until it hits $C^{\partial K}$. Together $F$ and the polyhedral surface $c^{\partial K}(F)$ bound a convex polytope $K_F$. 

Consider a point $x \in K \setminus C^{\partial K}$. It then lies in $K_F$, for some facet $F$. It is sufficient to verify the assumption of \Cref{lem:morse}: one needs to check that $x \notin \conv(B^\T(x))$, where $\T = C^{\partial K}$. Indeed, for every $y \in B^\T(x)$, the vector $y-x$ forms an acute angle with $n_F$, and therefore, the entire set $B^\T(x)$ lies in the open halfspace $\{z \in Y: \langle z-x, n_F \rangle > 0\}$. Therefore, $x \notin \conv(B^\T(x))$.
\end{proof}

We conclude this section by speculating how \Cref{thm:cutlocus} can be proven for convex bodies other than polytopes. Unfortunately, the case of a general convex body $K$ cannot be proven by approximating $K$ with polytopes $K_i$, because in order to guarantee $C^0$-convergence $C^{\partial K_i} \to C^{\partial K}$, we cannot get away just with polytopes. We sketch an argument that works for fairly general convex bodies, modulo some technicalities. We take a different approach and assume the following ``tameness'' assumption: $C^{\partial K}$ consists of finitely many compact convex sets. It allows us to apply \Cref{lem:morse}, but it is highly likely that the lemma can also be stated and proven in greater generality. Not only the class of tame convex bodies includes polytopes, but it also seems to be $C^2$-dense among all $C^2$-smooth convex bodies (we do not discuss this in detail since this digresses too far from the main topic of the paper).

\begin{proof}[Sketch of the proof of \Cref{thm:cutlocus} for tame convex bodies]
\hfill\\

The following two properties will be shown to imply the \"ubercontractibility of $C^{\partial K}$.

\begin{enumerate}
 \item The cut-locus of $\partial K$ consists of finitely many compact convex sets (so we can use \Cref{lem:morse}).
 \item The boundary of $K$ is $C^2$-smooth and strongly convex in the sense that its second fundamental form is positive definite.
\end{enumerate}

The tameness assumption guarantees the first property. We argue that we can inflate $K$ slightly to get a new convex body $K'$ that satisfies the second property, while preserving cut-locus $C^{\partial K'} = C^{\partial K} = \T$. This amounts to choosing $r^{\partial K'}(x)$ for $x \in \T$ carefully, so that on every face of $\T$, $r^{\partial K'}$ is $C^1$-close to $r^{\partial K}$, and $r^{\partial K'}$ is strongly convex. The details are omitted.

In the rest of the proof we assume that $K$ satisfies the two properties above, and we will verify the assumption of \Cref{lem:morse} for $\T = C^{\partial K}$ to show that it is \"ubercontractible. Pick points $x \in K \setminus \T$, and $y \in B^\T(x)$. Our goal is to prove that $\langle y-x, n \rangle > 0$, where $n$ is the inner normal to $\partial K$ at $b^{\partial K}(x)$. This will imply that $x \notin \conv(B^\T(x)) \subset \{z \in \Y: \langle z-x, n \rangle > 0\}$.

Suppose, for the contrary, that $\langle y-x, n \rangle \le 0$. Parametrize the straight line segment $[x,y]$ linearly as $x(t), t \in [0,1]$, $x(0) = x$, $x(1) = y$. For each $t \in [0,1)$, let $u(t) = b^{\partial K}(x(t))$ be the only element $B^{\partial K}(x(t))$. Consider also $v(t) = c^{\partial K}(x(t))$, for $t \in [0,1]$, and notice that $v(1) = y$. The key idea is to look at the continuous family of straight line segments $[x(t), v(t)]$, and investigate how they intersect the open ball $O$ centered at $x$ of radius $r^{\partial K}(x)$. We make the following observations.
\begin{itemize}
 \item The open ball $O$ does not intersect $\T$, and $v(t) \in \T$. So for each $t \in [0,1)$, the segment $[x(t), v(t)]$ starts inside $O$ and ends outside of $O$. 
 \item The strong convexity of $\partial K$ implies that the angle between the vectors $y-x$ and $v(t)-x(t)$ is strictly increasing for $t \in [0,1)$. To see that, one needs to differentiate in $t$ the inner normal to $\partial K$ at $u(t)$ (which is collinear with $v(t)-x(t)$).
 \item Initially, the angle between the vectors $y-x$ and $v(0)-x(0)$ is non-acute (since by assumption $\langle y-x, n \rangle \le 0$). Therefore, for $t$ close to $1$ the angle between $y-x$ and $v(t)-x(t)$ is obtuse, and tends to a limit that is obtuse. It follows that the length of the part of $[x(t), v(t)]$ that lies in $O$ is bounded away from $0$. 
 \item But, as $t \to 1$, the segment $[x(t), v(t)]$ degenerates to the point $y$, so its length must approach zero. This contradiction concludes the proof.
\end{itemize}
\end{proof}

\clearpage 

\section{Notation Table}

\begin{table}[ht!]
\centering
\setlength\tabcolsep{5pt}
\caption{Notation and main symbols.\label{notations}}
\resizebox{1.\linewidth}{!}{
\begin{tabular}{lll}
\toprule
\textbf{Notation} & \textbf{Definition} & \textbf{Place}\\
\midrule
$\conv(\X)$ & Convex hull of $\X$ in a Banach space & \\
$\cconv(\X)$ & Closure of convex hull of $\X$ in a Banach space & \\
$\h_k(\cdot)$ & Singular homology in degree $k$ \\
$\Hom_k(\cdot)$ & Reduced singular homology in degree $k$ \\
$\N_r(\X\subset\mathcal{Z})$ & Open $r$-neighborhood of $\X$ in $\mathcal{Z}$ &\Cref{sec:PH} \\
$\overline{\N}_r(\X\subset\mathcal{Z})$ & Closed $r$-neighborhood of $\X$ in $\mathcal{Z}$ & \Cref{sec:PH}\\
$\V_r(\X)$ & Vietoris--Rips complex of $\X$ for distance $r$ & \Cref{defn:VR} \\
$\C_r(\X\subset\mathcal{Z})$ & \v{C}ech complex of $\X\subset\mathcal{Z}$ for distance $r$ & \Cref{defn:Cech} \\
$\mathrm{Spec}_k(\Delta_\bullet)$ & Homological spectrum of the filtration $\Delta_\bullet$  & \Cref{defn:spectrum} \\
$\PD_k(\X)$ & $k\textsuperscript{th}$-persistence diagram of $\X$ with VR filtration & \Cref{sec:PHbackground} \\
$\widecheck{\PD}_k(\X\subset \Y)$ & $k\textsuperscript{th}$-persistence diagram of $\X\subset\Y$ with \v{C}ech filtration & \Cref{sec:PHbackground} \\
$b_\omega,d_\omega$ &  Birth and death time of the homology class $\omega$ & \Cref{def:lifespan} \\
$d_\omega-b_\omega$ & Persistence (lifespan) of the homology class $\omega$ & \Cref{def:lifespan} \\
$\dd_b(\cdot,\cdot)$ & Bottleneck distance between persistence diagrams & \Cref{sec:stability} \\
$\dd_\h^\Y(\cdot,\cdot)$ & Hausdorff distance between two subsets in $\Y$ & \Cref{sec:PH} \\
$\dd_{\mathrm{GH}}(\cdot,\cdot)$ & Gromov--Hausdorff distance between two metric spaces & \Cref{sec:PH} \\
$L^\infty(\X)$ & Space of bounded functions on $\X$ with $\sup$ norm & \Cref{sec:ts} \\
$\ts(\X)$ & Tight span of $\X$ & \Cref{defn:tightspan} \\
$\rho(\M)$ & Gromov's filling radius of $\M$ & \Cref{defn:filling_radius} \\
$\rho(\omega; \X \subset \Y)$ & The relative filling radius of $\omega\in \Hom_k(\X)$ relative to $\Y$ & \Cref{defn:homology_filling_radius} \\
$\rho(\omega; \X)$ & The absolute filling radius of $\omega\in \Hom_k(\X)$ & \Cref{defn:homology_filling_radius} \\
$\rad(\X)$ & The radius of $\X$ & \Cref{def:radius} \\
$\rad(\X \subset \Y)$ & The circumradius of $\X$ relative to $\Y$ & \Cref{def:radius} \\
$\UW_k(\X)$ & Urysohn $k$-width of $\X$ & \Cref{def:Urysohn} \\
$\AW_k(\X \subset \Y)$ & Alexandrov $k$-width of $\X$ relative to $\Y$ & \Cref{defn:AW} \\
$\TW_k(\X\subset\Y)$ & $k\textsuperscript{th}$ treewidth of $\X$ relative to $\Y$ & \Cref{defn:treewidth} \\
 $\TW^C_k(\X\subset\Y)$ & $C$-robust $k$-dimensional treewidth of $\X$ & \Cref{defn:robust-treewidth} \\
$\KW_k(\X\subset \Y)$ & Kolmogorov $k$-width of $\X$ relative to $\Y$ & \Cref{defn:KW} \\
$\nu_k(\X)$ & $k\textsuperscript{th}$ variance of $\X$ (PCA$_\infty$) & \Cref{defn:pca-infty} \\
$\uspr(\X\subset \Y)$ & \"Uberspread of $\X$ relative to $\Y$ & \Cref{def:uberspread} \\
$\xi(\X)$ & VR-extinction time of $\X$ & \Cref{defn:VRextinction} \\
$\wc{\xi}(\X \subset \Y)$ & \v{C}ech-extinction time of $\X$ relative to $\Y$ & \Cref{defn:VRextinction} \\
$\cdef(\X \subset \Y)$ & Convexity deficiency of $\X$ relative to $\Y$ & \Cref{def:conv-def} \\

 $\hcdef(\X)$ & Hyperconvexity deficiency of $\X$ & \Cref{defn:hypdef}\\ 
\bottomrule
\end{tabular}}
\end{table}

\clearpage

\bibliography{references}
 \bibliographystyle{alpha}

\end{document}